\newcommand{\bpr}{\begin{trivlist} \item[]{\bf Proof. }}
\newcommand{\epr}{\hspace*{\fill} $\qed$\end{trivlist}}
\newcommand{\e}{{\mathrm e}}
\newcommand{\be}{\begin{eqnarray}}
\newcommand{\ee}{\end{eqnarray}}
\newcommand{\ba}{\begin{align}}
\newcommand{\ea}{\end{align}}
\newcommand{\bi}{\begin{itemize}}
\newcommand{\ei}{\end{itemize}}
\newcommand{\eqlab}[1]{\label{eq:#1}}
\renewcommand{\eqref}[1]{(\ref{eq:#1})}
\newcommand{\figref}[1]{Fig.~\ref{fig:#1}}
\newcommand{\figlab}[1]{\label{fig:#1}}
\newcommand{\lemmaref}[1]{Lemma~\ref{lemma:#1}}
\newcommand{\lemmalab}[1]{\label{lemma:#1}}
\newcommand{\remref}[1]{Remark~\ref{remark:#1}}
\newcommand{\remlab}[1]{\label{remark:#1}}
\newtheorem{theorem}{Theorem}[section]
\newtheorem{proposition}[theorem]{Proposition}
\newtheorem{lemma}[theorem]{Lemma}
\newtheorem{cor}[theorem]{Corollary}
\newtheorem{remark}[theorem]{Remark}
\numberwithin{equation}{section}
\definecolor{orange}{RGB}{255,127,0}
\newcommand\KUK[1]{{\color{black}{#1}}} 
\newcommand\RH[1]{{\color{black}{#1}}}
\newcommand\NEW[1]{{\color{black}{#1}}}
\begin{document}
\title{On entry-exit formulas for degenerate turning point problems in planar slow-fast systems}
\author{R. Huzak}
\address{Hasselt University, Campus Diepenbeek, Agoralaan Gebouw D, 3590 Diepenbeek, Belgium}
\author{K. Uldall Kristiansen}
\address{Department of Applied Mathematics and Computer Science, 
Technical University of Denmark, 
2800 Kgs. Lyngby, 
Denmark }


 \begin{abstract}
\KUK{In this paper, we study degenerate entry-exit problems associated with planar slow-fast systems having an invariant line $\{(x,y)\,:\,y=0\}$ with a turning point at $x=0$. The degeneracy stems from the fact that the slow flow has a saddle-node of even order $2n$, $n\in \mathbb N$, at the turning point, i.e. $x' = -x^{2n}(1+o(1))$ for $\epsilon=0$. We are motivated by the appearance of such turning point problems (for $n=1$) in the graphics $(I_2^1)$ and $(I_4^1)$, through a nilpotent saddle-node singularity at infinity, in the Dumortier-Roussarie-Rousseau program (for solving the finiteness part of Hilbert’s 16th problem for quadratic polynomial systems).  Our results show, under additional hypothesis, that in the case $n=1$ there is a well-defined entry-exit relation for $\epsilon\to 0$. The associated Dulac map is smooth w.r.t. $(\epsilon,\epsilon \log \epsilon^{-1})$. On the other hand for the cases $n\ge 2$, we show that the entry-exit relation requires additional control parameters. Our approach follows the one used by De Maesschalck, P. and Schecter, S. (JDE 2016) for a different type of degenerate entry-exit problem. In particular, we apply blow-up {after} having first performed a singular coordinate transformation of $y$. The degeneracy at $x=0$ requires an additional blow-up. We finally apply the result for $n=1$ to a normal form for the unfolding of the \NEW{relevant} graphics in the Dumortier-Roussarie-Rousseau program. Here we also demonstrate that the singular transformation of $y$ due to De Maesschalck, P. and Schecter, S. (JDE 2016) has practical significance in numerical computations.  }

\smallskip 

\smallskip 

\noindent \textbf{keywords.} Entry-exit, GSPT, blowup, the Dumortier-Roussarie-Rousseau
program, Hilbert's 16th problem.

 \end{abstract}

 \bigskip
\smallskip

 \maketitle
 \section{Introduction}
 
Consider a planar slow-fast system
\begin{equation}\eqlab{model0}
 \begin{aligned}
  \dot x &= \epsilon f(x,y,\epsilon),\\
  \dot y &= y h(x,y,\epsilon),
 \end{aligned}
 \end{equation}
where $\epsilon\ge 0$ is a singular perturbation parameter kept small and $f$ and $h$ are ($C^\infty$-)smooth functions. 
We suppose the following:
\begin{align}\eqlab{cond1}
x h(x,0,0)<0\quad \forall\, x\in I\backslash \{0\}, 
\end{align}
where $I\subset \mathbb R$ is a compact interval.
Then $x=0$ is a turning point where the stability of the invariant line $\{ y=0\} $ changes from normally attracting for $x>0$ to normally repelling for $x<0$. 

If $f(x,0,0)<0$ for all $x\in I$, then we deal with the well-known entry-exit problem studied by many authors (see \cite{DMS2016,Hsu2017,Kolesov,Sch1985} and references therein). The entry-exit problem consists of describing the transition map $\Sigma_{\text{in}}\rightarrow \Sigma_{\text{out}}$, $(x_{\text{in}},\delta)\mapsto (\Delta(x_{\text{in}},\epsilon),\delta)$ where 
\begin{align*}
\Sigma_{\text{in}}\,&:\,y=\delta, \,x_{\text{in}}\in I_{\text{in}},\\
\Sigma_{\text{out}}\,&:\, y=\delta,\,x_{\text{out}}\in I_{\text{out}},
\end{align*}
with $\delta>0$ small, $I_{\text{in}}\subset I\cap (0,\infty)$  and $I_{\text{out}}\subset I\cap (-\infty,0)$ compact intervals. It is well known that the problem with the intervals $I_{\text{in}}$ and $I_{\text{out}}$ is well defined for all $0<\epsilon\ll 1$ if for every $x_{\text{in}}\in I_{\text{in}}$, there is an $x_{\text{out}}=\Delta_0(x_{\text{in}})\in I_{\text{out}}$ such that 
\begin{align}
 \int_{x_{\text{in}}}^{x_{\text{out}}} \frac{h(s,0,0)}{f(s,0,0)}ds = 0.\eqlab{Delta0}
\end{align}
In particular, the Dulac map $\Delta(\cdot,\epsilon):I_{\text{in}}\rightarrow \mathbb R$, $\epsilon\in ]0,\epsilon_0[)]$, takes the following form
\begin{align*}
 \Delta(x_{\text{in}},\epsilon) = \Delta_0(x_{\text{in}}) + \NEW{\mathcal O(\epsilon)},
\end{align*}
where $\NEW{\mathcal O(\epsilon)}$ is a smooth function of $(x_{\text{in}},\NEW{\epsilon})$ and is identically zero when $\epsilon=0$. \NEW{Notice that the trajectory leaves $y=0$ after it has become unstable at $x=0$. This phenomena is also known as Pontryagin delay or bifurcation delay, see \cite{MR1167001,Kolesov}.}

On the other hand, if there is an $x_{\text{in}}\in I_{\text{in}}$ so that there is no $x_{\text{out}}\in I_{\text{out}}$ satisfying \eqref{Delta0}, then the entry-exit problem  with the intervals $I_{\text{in}}$ and $I_{\text{out}}$ is not well-defined for all $\epsilon>0$ sufficiently small.
For details, we refer to \cite{DMS2016} where the entry-exit problem is studied using a novel blow-up  technique
\KUK{based upon writing the equations in terms of $(x,z)$ where $z$ is related to $y$ through
\begin{align}\nonumber
    y = \e^{-z^{-1}},
\end{align}
see \cite[Corollary 1.2]{DMS2016}.
We will follow this approach in the present paper, see Section \ref{section-blow-up} for further details.}
The entry-exit formula \eqref{Delta0} plays an important role in the study of relaxation oscillations in predator-prey systems (see, e.g. \cite{AiYi24,Hsu19,Yao24} and references therein). We also refer to \cite{EE-Nikola,EE-Zhang} for some other more degenerate entry-exit problems and applications in $\mathbb R^2$ and $\mathbb R^3$.


%
More precisely, if we assume $h'_x(0,0,0)<0$ and use \eqref{cond1}, then, up to smooth equivalence, system \eqref{model0} has the following form 
\begin{equation}\eqlab{model0NF}
\begin{aligned}
 \dot x &= \epsilon f_0(x,\epsilon)  + y g_0(x,y,\epsilon),\\
 \dot y &=-xy,
\end{aligned}
\end{equation}
 for some smooth functions $f_0$ and $g_0$ and $g_0(0,0,0)=0$ (see Lemma \ref{lemma-NF} in Section \ref{subsection-NF}). Here, $h'_x$ denotes the partial derivative of $h$ w.r.t. $x$. 
  In this paper, we focus on the system \eqref{model0NF}
 and assume that $g_0$ is an arbitrary smooth function ($g_0(0,0,0)$ is not necessarily zero) and that $f_0(x,0)$ has a zero of even multiplicity  $2n$, $n\in \mathbb N$, at $x=0$:
\begin{align*}
 \frac{\partial^{k} f_0}{\partial x^k}(0,0) = 0\quad \forall\,k\in \{0,1,\ldots,2n-1\},\quad  \frac{\partial^{2n} f_0}{\partial x^{2n}}(0,0) < 0.
\end{align*} 
Since the multiplicity is even, the entry-exit problem may still be well defined.


\begin{remark}
   When $f_0(x,0)<0$, the entry-exit problem associated with \eqref{model0NF}, with an arbitrary smooth function $g_0$, has been studied in \cite[Section 5]{Hsu19}.
\end{remark}

\begin{figure}[htb]
	\begin{center}
		\includegraphics[width=5.9cm,height=4.1cm]{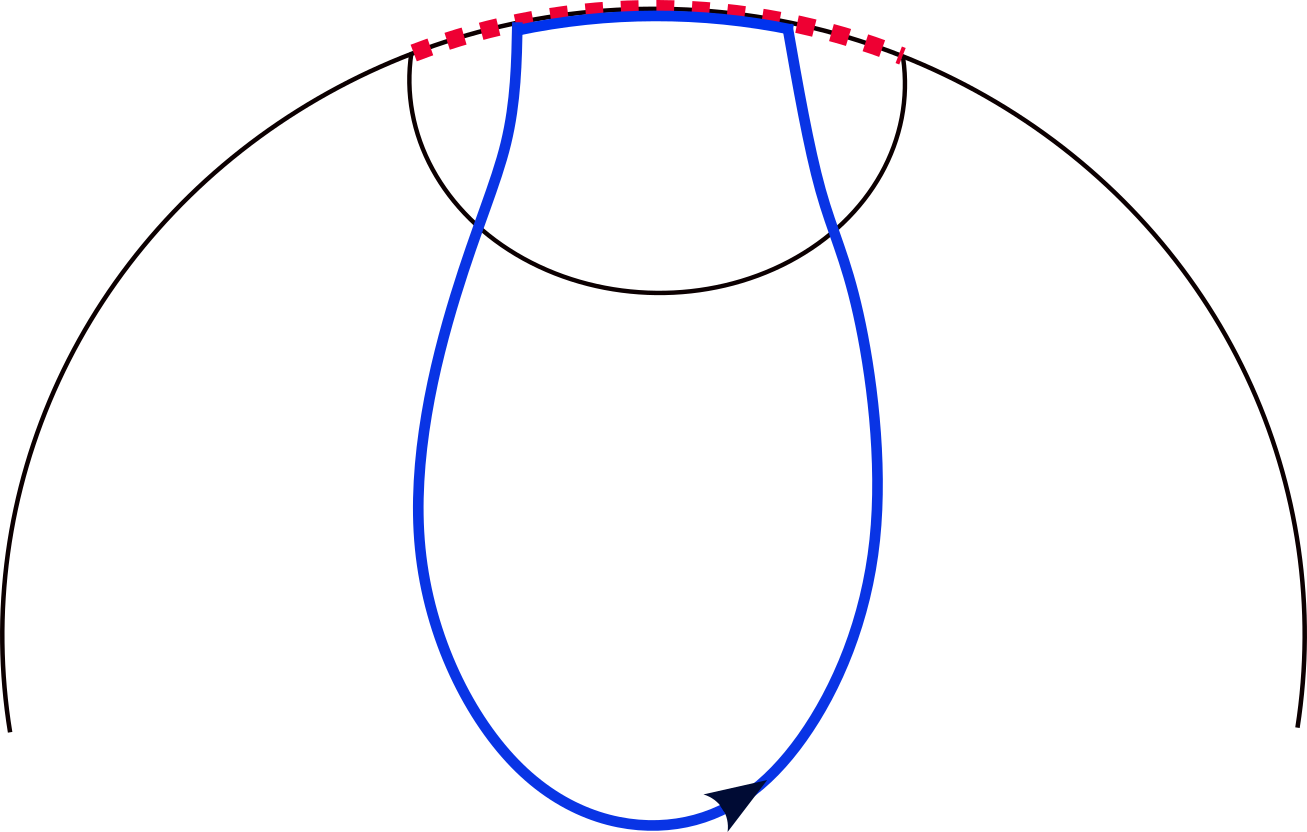}
         \end{center}
 \caption{A limit periodic set after desingularization of the graphic $(I_2^1)$ through a nilpotent saddle-node at infinity in the Dumortier-Roussarie-Rousseau program.}
	\label{fig-Motivation}
\end{figure}

The case $n=1$ is relevant to the analysis of the graphics $(I_2^1)$ and $(I_4^1)$ through a nilpotent saddle-node singularity at infinity in the Dumortier-Roussarie-Rousseau program (see \cite[Figure 8]{Program}). The main goal of this program is to solve the finiteness part of Hilbert’s 16th problem for quadratic polynomial systems. After a blow-up at the singular point at infinity one can detect all possible limit periodic sets related to $(I_2^1)$ and $(I_4^1)$ whose
finite cyclicity needs to be studied. Such a limit periodic set is given in Fig. \ref{fig-Motivation}. Here the invariant line $\{y=0\}$ corresponds to infinity in a Poincar\'e compactification and one needs to deal with the entry-exit problem \eqref{model0NF} where $f_0(x,0)$ has a zero of multiplicity $2$ and $g_0\ne 0$. For more details, \RH{we refer to Section \ref{example} and \cite{HuzRou202.}.} The case $n\ge 2$ is similarly relevant for the general version of Hilbert's 16th problem. \RH{Besides this purely mathematical question, we believe that the entry-exit problem treated in this paper could also be important when one studies relaxation oscillations in predator-prey systems and other applied slow-fast models.}

The upper bounds for the number of canard limit cycles of \eqref{model0NF} with $g_0(0,0,0)\ne 0$ (and more general slow-fast systems without presence of an
invariant line) have been studied in \cite{detectable,detectable-1} using the notion of the slow divergence integral \cite[Chapter 5]{DDR-book-SF}. We point out that the entry-exit problem of \eqref{model0NF} has not been treated in \cite{detectable,detectable-1}.

The paper is organized as follows. In Section \ref{section-results} we define our slow-fast model and state the main results. We introduce a blow-up in Section \ref{section-blow-up}. In Sections \ref{section-proof-n=1} and \ref{proof-n>2} we prove our main results. Section \ref{example} is devoted to \KUK{the entry-exit problem of the graphics $(I_2^1)$ and $(I_4^1)$}. \KUK{Here we also illustrate our results further by performing some numerical computations.}

\section{Slow-fast model and statement of results}\label{section-results}

\subsection{Normal form}\label{subsection-NF}
We start this section with the following lemma. 
\begin{lemma}
    \label{lemma-NF}
    Consider system \eqref{model0} and assume that $h'_x(0,0,0)<0$ and \eqref{cond1} are satisfied. Then there exists a smooth $\epsilon$-family of coordinate changes bringing \eqref{model0}, near $I\times\{0\}\subset \mathbb R^2$, in \eqref{model0NF}, up to multiplication by a smooth positive function.
\end{lemma}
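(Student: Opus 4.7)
The plan is to expand $h$ by Hadamard's lemma and then normalize in three successive steps: (a) a positive time rescaling by $1/k(x)$; (b) a near-identity change of the $x$-coordinate removing the $\epsilon$-dependence of $h$ on $\{y=0\}$; and (c) a mixed $(x,y)$-change that absorbs the remaining nonlinear-in-$y$ term in one stroke.

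Concretely, continuity of $h$ together with \eqref{cond1} gives $h(0,0,0)=0$, so Hadamard division yields $h(x,0,0)=xm(x)$ with $m$ smooth on $I$; the sign condition together with $h'_x(0,0,0)<0$ then forces $k(x):=-m(x)>0$ throughout $I$. A further application of Hadamard, first in $y$ and then in $\epsilon$, produces $h(x,y,\epsilon)=-xk(x)+\epsilon h_2(x,\epsilon)+y h_1(x,y,\epsilon)$ for smooth $h_1,h_2$. Dividing the entire vector field by $k(x)>0$ reduces the $y$-equation to $\dot y=y[-x+yA+\epsilon B]$ with $A=h_1/k$, $B=h_2/k$. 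The smooth near-identity change $X=x-\epsilon B(x,\epsilon)$ then rewrites $-x+\epsilon B=-X$, putting the system in the form $\dot X=\epsilon\tilde F(X,y,\epsilon)$, $\dot y=y[-X+y\tilde A(X,y,\epsilon)]$ for smooth $\tilde F,\tilde A$.

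For step (c), the key observation is that the mixed change $\tilde X=X-y\tilde A(X,y,\epsilon)$, $Y=y$ is a smooth diffeomorphism in a neighbourhood of $I\times\{0\}$ (its Jacobian on $y=0$ is upper triangular with unit diagonal) and already does the job without any additional rescaling, since $\dot Y=\dot y=y[-X+y\tilde A]=-y(X-y\tilde A)=-\tilde X Y$. A routine chain-rule computation of $\dot{\tilde X}$ from $\tilde X=X-y\tilde A$ yields an expression of the form $\epsilon\tilde F(1-y\tilde A'_X)+Y\tilde X(\tilde A+Y\tilde A'_y)$; one final application of Hadamard in $Y$ to the coefficient of $\epsilon$ produces the required split $\dot{\tilde X}=\epsilon f_0(\tilde X,\epsilon)+Y g_0(\tilde X,Y,\epsilon)$, completing the reduction.

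The only genuine subtlety---and what I would identify as the main obstacle---is the necessity of mixing $x$ and $y$ in step (c). If one instead tried to preserve $X$ and absorb the $y\tilde A$ term by a pure rescaling $Y=y\phi(X,y,\epsilon)$, the resulting ODE for $\log\phi$ in $y$ would have right-hand side $\tilde A/(X-y\tilde A)$, which is singular at the turning point $X=0$ and forces $\phi$ itself to blow up there. The mixing $\tilde X=X-y\tilde A$ absorbs exactly this obstruction into an $x$-coordinate shift depending linearly on $y$, so no order-by-order construction in $\epsilon$ (nor any further time rescaling) is needed, and the proof concludes.
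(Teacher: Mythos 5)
Your proof is correct and follows essentially the same route as the paper's: a positive rescaling normalizing the coefficient of $x$ in $h$ on $\{y=0\}$, an $\epsilon$-dependent adjustment of the $x$-coordinate (the paper uses a shift moving the zero of $h(\cdot,0,\epsilon)$ to the origin, you use a near-identity change absorbing $\epsilon h_2$), and the identical key mixed change $\tilde x = x - y h_1$ whose invertibility near $I\times\{0\}$ both arguments justify the same way.
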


\begin{proof}
The conditions $h'_x(0,0,0)<0$ and \eqref{cond1} imply that,  after an $\epsilon$-dependent shift of $x$, we can write $h$ in \eqref{model0} as 
\begin{align*}
 h(x,y,\epsilon) = xh_0(x,\epsilon)+y h_1(x,y,\epsilon),
\end{align*}
where
\begin{align*}
 h_0(x,0)\le -c<0\quad \forall\,x\in I,
\end{align*}
for some $c>0$ small enough.
Upon dividing the right-hand side by the positive factor $-h_0(x,\epsilon)$ we can achieve that $h_0(x,\epsilon)=-1$ for all $x\in I$ and $\epsilon\ge 0$ small enough. Now define a new coordinate
\begin{align*}
 \tilde x = x -y h_1(x,y,\epsilon).
\end{align*}
By the implicit function theorem this induces a smooth $\epsilon$-family of coordinate changes $(x,y) \mapsto (\tilde x,y)$ for $x\in I$ and $y$ kept close to zero. After applying the coordinate change we obtain \eqref{model0NF} 
with smooth functions $f_0$ and $g_0$ and $g_0(0,0,0)=0$ (we drop the tildes).
\end{proof}

In this paper, we will consider an arbitrary smooth function $g_0$ and a generic unfolding $f_\lambda$ of $f_0$ in \eqref{model0NF} :
\begin{align}
 f_\lambda(x,\epsilon)  = \lambda_0 + \lambda_1 x +\cdots+\lambda_{2n-1} x^{2n-1} + x^{2n} \zeta_{2n}(x,\epsilon),\eqlab{flambda0}
\end{align}
where $\lambda=(\lambda_0,\ldots,\lambda_{2n-1})$ are the unfolding parameters kept close to zero, $\zeta_{2n}$ is a smooth function and $\zeta_{2n}(x,0)<0$ for all $x\in I$. By redefining $\epsilon$, we can easily achieve that 
$$\zeta_{2n}(0,0)=-1.$$

It is natural to consider a blow-up  of parameters
\begin{align}\eqlab{blowuppar}
 \epsilon = r\bar \epsilon,\quad \lambda_i=r^{2n-i} \overline{\lambda}_i,\quad i\in \{0,\ldots,2n-1\}, \ (\bar\epsilon,\overline{\lambda}_0,\ldots,\overline{\lambda}_{2n-1})\in\mathbb S^{2n}, \ \bar\epsilon\ge 0.
\end{align}
We will only focus on the single chart $\bar \epsilon=1$, setting 
\begin{align*} \lambda_i=\epsilon^{2n-i} \widetilde{\lambda}_{i},\quad i\in \{0,\ldots,2n-1\},
\end{align*}
with $\widetilde\lambda=(\widetilde{\lambda}_{0},\ldots,\widetilde{\lambda}_{2n-1})$ kept in a compact subset $\Lambda$ of $\mathbb R^{2n}$,
 so that 
\begin{align}\eqlab{flambda}
 f_{\widetilde\lambda}(x,\epsilon):=f_{(\epsilon^{2n}\widetilde\lambda_0,\ldots,\epsilon\widetilde\lambda_{2n-1})}(x,\epsilon)  = \epsilon^{2n} P_{\widetilde\lambda}(\epsilon^{-1} x) + x^{2n}(\zeta_{2n}(x,\epsilon)+1),
\end{align}
with
\begin{align}\eqlab{P2n}
 P_{\widetilde\lambda}(x_2) :=\widetilde\lambda_0 +  \widetilde \lambda_1 x_2 +\cdots+ \widetilde\lambda_{2n-1} x_2^{2n-1} - x_2^{2n},
\end{align}
For simplicity we drop the tildes and write $f_{\lambda}, P_{\lambda}$ instead of $f_{\widetilde\lambda},P_{\widetilde\lambda}$.

Finally, we consider $n\in \mathbb N$ and 
\begin{equation}\eqlab{xy}
\begin{aligned}
 \dot x &= \epsilon f_\lambda(x,\epsilon) + y g(x,y,\epsilon),\\
 \dot y &= - x y,
\end{aligned}
\end{equation}
with $f_\lambda$ given by \eqref{flambda} with $\lambda\in \Lambda$ and $\zeta_{2n}(0,0)=-1$. We will suppose that $f_\lambda$ and $g$ are $C^\infty$-smooth functions.  

The following lemma provides conditions for the existence of a regular passage along the invariant line $\{y=0\}$ of \eqref{xy}, for $\epsilon>0$. 

\begin{lemma}   \label{lemma-condition}
Suppose that
\begin{align}\eqlab{f2n}
 \zeta_{2n}(x,0)\le -c,\quad \forall\, x\in I,
\end{align} 
and that
\begin{align}\eqlab{P2ncond}
 P_\lambda(x_2)\le -c\quad \forall\,x_2\in \mathbb R, \ \lambda\in \Lambda,
\end{align}
for some $c>0$ and $\Lambda\subset \mathbb R^{2n}$ a compact set.
Then there is an $\epsilon_0>0$ such that 
\begin{align*}
f_\lambda(x,\epsilon)<0\quad \forall\,  x\in I, \,\epsilon\in ]0,\epsilon_0[, \ \lambda\in \Lambda.
\end{align*}
\end{lemma}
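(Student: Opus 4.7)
The plan is to split $I$ into an inner region $|x|\le K\epsilon$ (the scale of the turning point) and an outer region $|x|\ge K\epsilon$, and to apply the hypotheses \eqref{P2ncond} and \eqref{f2n} respectively in these two regions, treating the remaining contribution to $f_\lambda$ as a small perturbation.

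In the inner region I would substitute $x=\epsilon x_2$ with $|x_2|\le K$ directly into \eqref{flambda} to obtain
\begin{align*}
f_\lambda(\epsilon x_2,\epsilon)=\epsilon^{2n}\bigl[P_\lambda(x_2)+x_2^{2n}(\zeta_{2n}(\epsilon x_2,\epsilon)+1)\bigr].
\end{align*}
By \eqref{P2ncond} the first summand in the bracket is $\le -c$, while the second tends to $0$ uniformly on $|x_2|\le K$ as $\epsilon\to 0$, because $\zeta_{2n}(0,0)+1=0$ and $\zeta_{2n}$ is continuous. Hence the bracket is bounded above by $-c/2$ uniformly in $\lambda\in\Lambda$ once $\epsilon$ is small enough, giving $f_\lambda(x,\epsilon)\le -\tfrac{c}{2}\epsilon^{2n}<0$.

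In the outer region I would use the equivalent form
\begin{align*}
f_\lambda(x,\epsilon)=\sum_{i=0}^{2n-1}\lambda_i\epsilon^{2n-i}x^i+x^{2n}\zeta_{2n}(x,\epsilon),
\end{align*}
obtained by combining \eqref{flambda} and \eqref{P2n}. Setting $M:=\sup\{|\lambda_i|:\lambda\in\Lambda,\ 0\le i\le 2n-1\}$ (finite by compactness of $\Lambda$), the inequality $|x|\ge K\epsilon$ with $K\ge 1$ yields $\epsilon^{2n-i}|x|^i\le x^{2n}/K^{2n-i}\le x^{2n}/K$ for $0\le i\le 2n-1$, so the polynomial part is at most $(2nM/K)\,x^{2n}$. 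Choosing $K$ so large that $2nM/K\le c/4$, and then $\epsilon$ so small that \eqref{f2n} and continuity give $\zeta_{2n}(x,\epsilon)\le -c/2$ on $I$, we get $f_\lambda(x,\epsilon)\le -\tfrac{c}{4}x^{2n}<0$.

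Taking $\epsilon_0$ as the minimum of the two thresholds obtained above completes the argument. The only point that needs a little care is that all estimates must be uniform in $\lambda\in\Lambda$, but this is immediate from compactness of $\Lambda$ and from the fact that $\lambda$ enters only through the bounded coefficients $\lambda_i$. I do not anticipate any genuine obstacle here; this is a routine matched-scales argument, preparatory to the more delicate blow-up analysis in the sections that follow.
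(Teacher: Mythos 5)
Your proposal is correct and follows essentially the same route as the paper's proof: the paper splits $I$ at $|x|=\epsilon\gamma^{-1}$ (your $K=\gamma^{-1}$), uses the form $f_\lambda=\epsilon^{2n}\bigl(P_\lambda(\epsilon^{-1}x)+(x/\epsilon)^{2n}(\zeta_{2n}(x,\epsilon)+1)\bigr)$ with \eqref{P2ncond} in the inner region, and absorbs the polynomial part into $x^{2n}\cdot\mathcal O(\gamma)$ against $x^{2n}\zeta_{2n}$ via \eqref{f2n} in the outer region. The uniformity in $\lambda\in\Lambda$ is handled identically through compactness.
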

\begin{proof}
Let $\gamma>0$ and $\epsilon>0$. 

For $x\in I\backslash [-\epsilon \gamma^{-1},\epsilon \gamma^{-1}]$, we use \eqref{flambda} and estimate:
\begin{align*}
 f_\lambda(x,\epsilon)  &= x^{2n} \left(\epsilon^{2n} x^{-2n}\lambda_0 +  \epsilon^{2n-1}  x^{-2n+1} \lambda_1  +\cdots+ \epsilon x^{-1} \lambda_{2n-1}\right) + x^{2n} \zeta_{2n}(x,\epsilon)\\
 &<x^{2n}\left(\gamma^{2n} \vert \lambda_0\vert +  \gamma^{2n-1}  \vert \lambda_1\vert  +\cdots+\gamma \vert \lambda_{2n-1}\vert+\zeta_{2n}(x,\epsilon)\right)\\
 &\le x^{2n} \left(\mathcal O(\gamma) -C\right)<0,
 \end{align*}
for a constant $C>0$, for all $\gamma>0$ small enough, $\lambda\in \Lambda$ and for all $\epsilon\in]0,\epsilon_0[$, with $\epsilon_0>0$ small. Here we have used \eqref{f2n}. We fix such a $\gamma>0$.

Using \eqref{flambda} again, we can write
\begin{align*}
 f_\lambda(x,\epsilon) = \epsilon^{2n} \left(P_\lambda(\epsilon^{-1} x) +\left(\frac{x}{\epsilon}\right)^{2n} (\zeta_{2n}(x,\epsilon)+1)\right),
\end{align*}
for $\epsilon>0$. This, together with \eqref{P2ncond} and $\zeta_{2n}(0,0)=-1$, implies that $f_\lambda(x,\epsilon) <0$ for all $x\in [-\epsilon \gamma^{-1},\epsilon \gamma^{-1}]$, $\lambda\in \Lambda$ and all $\epsilon\in ]0,\epsilon_0[$, up to shrinking $\epsilon_0$ if necessary. This completes the proof of the lemma.
\end{proof}
It is clear from \eqref{P2n} that $\Lambda$ with the property \eqref{P2ncond} exists. For the rest of the paper, we assume that \eqref{f2n} and \eqref{P2ncond} are satisfied.

\begin{remark}
     \KUK{For a complete analysis of the unfolding \eqref{flambda0}, one would have to study the remaining charts associated with the blow-up \eqref{blowuppar}. We leave this to the interested reader, but we believe that these cases can be covered through a combination of the present work (in the chart $\bar \epsilon=1$) with standard results on entry-exit, e.g. \cite{DMS2016}. }
     %
\end{remark}

\subsection{Statement of the main results}\label{subsection-statement} In this section, we state the main results (Theorem \ref{Thm-n=1} for $n=1$ and Theorem \ref{thm-n>1} for $n\ge 2$). 

For $\epsilon=0$, system \eqref{xy} becomes
\begin{align*}
 \dot x &=  y g(x,y,0),\\
 \dot y &= -x y,
\end{align*}
or written as \KUK{as an equation or $y=y(x)$} for $y\ne 0$
\begin{align}\eqlab{dydx}
 \frac{dx}{dy} = -x^{-1} g(x,y,0).
\end{align}
Notice that \eqref{dydx} is well defined and regular for all $y\in [0,\delta]$, $\delta\in (0,1)$ small enough, and $x\in I_{\text{in}}$ where $I_{\text{in}}\subset (0,\infty)$ is a compact interval. More precisely, let $x= \psi(x_{\text{in}},y),\,y\in [0,\delta]$, denote the $C^\infty$-smooth solution of \eqref{dydx} with the initial condition $\psi(x_{\text{in}},\delta)=x_{\text{in}}\in I_{\text{in}}$ at $y=\delta$. We then define $(x_{\text{in}}^b,0)$ as the base point of $(x_{\text{in}},\delta)\in \Sigma_{\text{in}}$ on $y=0$:
\begin{align}
 x_{\text{in}}^b :=\psi(x_{\text{in}},0).\eqlab{xbx0}
\end{align} We suppose that $x_{\text{in}}^b\in I\cap (0,\infty)$, with $I$ fixed in Lemma \ref{lemma-condition}. Finally, $x_{\text{out}}^b=x_{\text{out}}^b(x_{\text{out}})\in I\cap (-\infty,0)$ is defined completely analogously  for $x_{\text{out}}\in I_{\text{out}}$, see Fig. \ref{fig-entry-exit}. 

\begin{figure}[htb]
	\begin{center}
		\includegraphics[width=6.7cm,height=3.5cm]{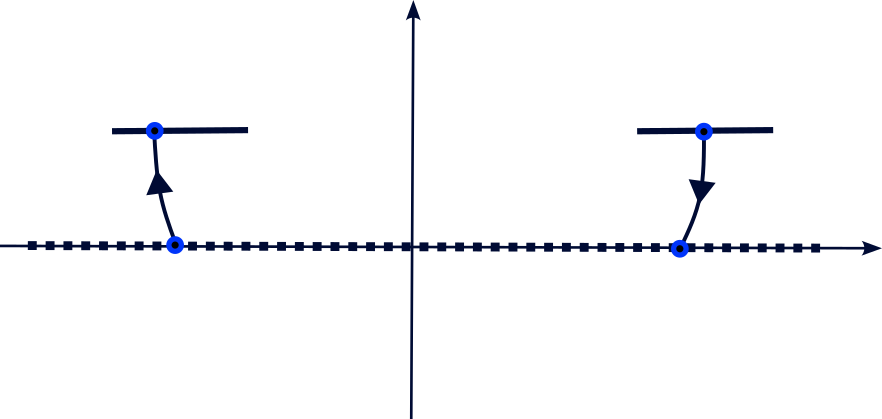}
  {\scriptsize
\put(4,40){$x$}
\put(-97,95){$y$}
\put(-55,74){$(x_{\text{in}},\delta)$}
\put(-60,27){$(x_{\text{in}}^b,0)$}
\put(-169,27){$(x_{\text{out}}^b,0)$}
\put(-175,74){$(x_{\text{out}},\delta)$}
\put(-20,66){$\Sigma_{\text{in}}$}
\put(-133,66){$\Sigma_{\text{out}}$}}
\caption{Illustration of the base point $(x_{\text{in}/\text{out}}^b,0)$ of $(x_{\text{in}/\text{out}},\delta)\in \Sigma_{\text{in}/\text{out}}$.}
	\label{fig-entry-exit}
    \end{center}
\end{figure}

\RH{First, we assume that $n=1$ in \eqref{xy}. We then consider the Cauchy principal values
\begin{align}\eqlab{CauchyPV}
\text{p.v.} \int_{x_{\text{out}}^b}^{x_{\text{in}}^b}\frac{1}{s\zeta_2(s,0)}ds:= \lim_{\rho\to 0^+}\left(\int_{x_{\text{out}}^b}^{-\rho}\frac{1}{s\zeta_2(s,0)}ds+ \int_{\rho}^{x_{\text{in}}^b}\frac{1}{s\zeta_2(s,0)}ds\right)
\end{align}
and 
\begin{align}\eqlab{CauchyPV-second-part}
 \text{p.v.} \int_{-\infty}^{+\infty}\frac{s}{P_\lambda(s)}ds:= \lim_{\rho\to \infty} \int_{-\rho}^{\rho}\frac{s}{P_\lambda(s)}ds.
\end{align}

}
We then have the following result.
\begin{theorem}\label{Thm-n=1} Fix any $k\in \mathbb N$ and consider system \eqref{xy} with $n=1$ and $P_{\lambda}(x_2) =\lambda_0 +   \lambda_1 x_2  - x_2^{2}$. Suppose that \eqref{f2n} and \eqref{P2ncond} are satisfied and that 
for every $x_{\text{in}}\in I_{\text{in}}$ there is a $x_{\text{out}}=\Delta_0(x_{\text{in}})\in I_{\text{out}}$ so that \RH{
\begin{align}\eqlab{entry-exit-n=1}
\text{p.v.} \int_{x_{\text{out}}^b}^{x_{\text{in}}^b}\frac{1}{s\zeta_2(s,0)}ds + \text{p.v.} \int_{-\infty}^{+\infty}\frac{s}{P_\lambda(s)}ds=0,
\end{align} }
with $x_{\text{in}}^b=x_{\text{in}}^b(x_{\text{in}})$ and $x_{\text{out}}^b=x_{\text{out}}^b(x_{\text{out}})$ defined above.
Then the Dulac map $\Delta(\cdot,\epsilon):I_{\text{in}}\rightarrow \KUK{\mathbb R}$ is well-defined for all $\epsilon\in ]0,\epsilon_0[$, with $\epsilon_0>0$ small, and takes the following form
\begin{align*}
 \Delta(x_{\text{in}},\epsilon) =\Delta_0(x_{\text{in}})+\phi(x_{\text{in}},\epsilon,\epsilon \log \epsilon^{-1}),
\end{align*}
where $\phi: I_{\text{in}}\times [0,\epsilon_0)\times [0,\epsilon_0 \log \epsilon_0^{-1})\rightarrow \mathbb R$ is $C^k$-smooth and satisfies $\phi(x_{\text{in}},0,0)=0$ for all $x_{\text{in}}\in I_{\text{in}}$. 
\end{theorem}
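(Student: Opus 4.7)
The plan is to adapt the blow-up strategy of \cite{DMS2016} to handle the order-$2$ degeneracy of the slow flow at $x = 0$, which requires an additional blow-up beyond the one used there. First, I would introduce the singular transformation $y = e^{-1/z}$ for $y \in (0,1)$. Since $\dot y / y = -x$ becomes $\dot z = -x z^2$, the system \eqref{xy} rewrites as
\begin{equation*}
\dot x = \epsilon f_\lambda(x,\epsilon) + e^{-1/z} g(x, e^{-1/z}, \epsilon), \qquad \dot z = -x z^2,
\end{equation*}
which extends $C^\infty$-smoothly across $\{z = 0\}$ because $e^{-1/z}$ is flat there; on $\{z=0\}$ the flow reduces to the slow flow $\dot x = \epsilon f_\lambda(x, \epsilon)$. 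Next I would blow up $(x, z, \epsilon) = (0, 0, 0)$, and, to resolve the remaining $(2n)$-fold degeneracy of the slow flow at $x = 0$, perform an additional $(1,1)$-weighted blow-up in $(x,\epsilon)$ on the corresponding boundary face.

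The essential chart is the family chart $\epsilon = r$, $x = \epsilon x_2$, $z = \epsilon z_2$. After a time rescaling the system will take the form
\begin{equation*}
\dot x_2 = P_\lambda(x_2) + \mathcal{O}(\epsilon), \qquad \dot z_2 = -x_2 z_2^2 + \mathcal{O}(\epsilon),
\end{equation*}
with smooth $\mathcal{O}(\epsilon)$ remainders (the $e^{-1/(\epsilon z_2)}$ contribution is flat in $\epsilon$ on $z_2 > 0$). For $\epsilon = 0$ the equation $dz_2/dx_2 = -x_2 z_2^2 / P_\lambda(x_2)$ admits the first integral $1/z_2 - \int_0^{x_2} s\,ds / P_\lambda(s)$; thanks to \eqref{P2ncond}, the heteroclinic connection $z_2 = 0^+$, $x_2 \in (-\infty,+\infty)$ is well-defined, and its Dulac-type passage will contribute exactly $\text{p.v.} \int_{-\infty}^{+\infty} s/P_\lambda(s)\,ds$. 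In the phase-directional charts ($\bar z = \pm 1$), I would apply standard Fenichel theory to the attracting/repelling slow manifolds (after the secondary blow-up desingularises the endpoint $x = 0$) to obtain $C^k$-smooth passage from $\Sigma_{\text{in}/\text{out}}$ down to the boundary of the family chart, with the accumulated slow divergence reducing to the outer integrand $1/(s\zeta_2(s,0))$, divergent at $s = 0$.

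Assembling the three transitions, the divergent logarithmic tails of the outer integrals at $s = 0$ should cancel against the divergent tails of $\int s/P_\lambda(s)\,ds$ at $x_2 = \pm\infty$, leaving precisely the principal-value sum in \eqref{entry-exit-n=1}; its vanishing will characterise $\Delta_0(x_{\text{in}})$ as the leading term. The $\epsilon \log \epsilon^{-1}$ monomial should enter when matching the entry section $z = -1/\log\delta$ with the rescaled variable $z_2 = z/\epsilon = -1/(\epsilon\log\delta)$: tracking this boundary through the family chart introduces a term linear in $\log\epsilon^{-1}$, multiplied by $\epsilon$ from the scaling of the transition map. The hardest part will be the family chart analysis: establishing uniform $C^k$-smoothness (in $x_{\text{in}}$ and $\lambda \in \Lambda$) of the Dulac passage from $x_2 = +\infty$ to $x_2 = -\infty$, and verifying that the matched-asymptotics cancellation is clean enough to produce a remainder that is $C^k$ as a function of $(\epsilon, \epsilon \log\epsilon^{-1})$ rather than merely of $\epsilon$.
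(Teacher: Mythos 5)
Your overall strategy coincides with the paper's: the singular transformation \eqref{yz}, a cylindrical blow-up of the degenerate set $\{z=\epsilon=0\}$, a secondary $(1,1)$-weighted blow-up in $(x,\rho_2)$ resolving the quadratic degeneracy of the slow flow at $x=0$, identification of the leading-order entry-exit relation through cancellation of the logarithmically divergent tails of the outer slow-divergence integral against the tails of $\int s/P_\lambda(s)\,ds$ at $\pm\infty$, and a concluding implicit function theorem argument. The family-chart computation and the role of the first integral $1/z_2-\int_0^{x_2} s/P_\lambda(s)\,ds$ are also as in the paper.

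The genuine gap is at the corner between the outer region and the family chart, i.e. in the chart $\bar x=1$ of the secondary blow-up. If you keep the variable $z_2$ there, the relevant equilibrium $(r_1,z_2,\rho_{21})=(0,0,0)$ is only semi-hyperbolic: after desingularization the equations read $\dot r_1=-r_1F_{21}$, $\dot z_2=-z_2^2$, $\dot\rho_{21}=\rho_{21}F_{21}$ with eigenvalues $-1,0,1$, and the orbit of interest approaches this point with $z_2\sim 1/\log r_1^{-1}$, i.e. not along any power-law direction. Neither Fenichel theory nor the standard $C^k$ normal forms for resonant hyperbolic saddles apply here, and the paper explicitly remarks that it could not find suitable normal forms for this semi-hyperbolic configuration. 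The device that makes the proof work is the further exponential substitution $y_2=\e^{-1/z_2}$ (see \eqref{y2z2}), which turns $\dot z_2=-z_2^2$ into $\dot y_2=-y_2$ and the corner into a resonant hyperbolic saddle with eigenvalues $-1,-1,1$; explicit partial linearizations in the two invariant planes (producing exactly the regularized integrands $(\zeta_2(s,0)+1)/(s\zeta_2(s,0))$ and $(Q_\lambda(s)+1)/(sQ_\lambda(s))$ whose antiderivatives recombine with the $\log$-terms into the principal values of \eqref{entry-exit-n=1}), combined with the normal form of \cite[Proposition 4.6]{ZR02}, then yield the multiplicative prefactor and the $C^k$ remainder. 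Relatedly, your attribution of the $\epsilon\log\epsilon^{-1}$-dependence to the matching of the entry section with the rescaled variable is not accurate: it originates from the passages near the balanced resonant saddles (the line of saddles with eigenvalues $0,-x,x$ in the chart $\bar z=1$, handled via \cite{DMS2016}, and the corner saddle just described), where the transition time is of order $\log\epsilon^{-1}$.
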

We prove Theorem \ref{Thm-n=1} in Section \ref{section-proof-n=1}. \NEW{The smoothness w.r.t. $(\epsilon,\epsilon \log \epsilon^{-1})$ is natural since we will deal with the passage near a line of saddle singularities with positive and negative eigenvalues of equal magnitude (see Lemma \ref{lemma-boundary}). However, it might be possible  (although we do not expect it) that the final transition map is in fact smooth w.r.t. $\epsilon$ (as in \cite{DMS2016}). We have not pursued this in the present work since (a) it is not expected to be important for the cyclicity results of $(I_2^1)$ and $(I_4^1)$ and (b) it does not seem like a trivial task. Notice in particular w.r.t. (b) that logarithms also appear due to resonances associated with a separate blow-up transformation (see e.g. \eqref{normalformhatY} below).}

\begin{remark}\label{remark-Cauchy}
 \RH{From \eqref{CauchyPV}, $\zeta_{2}(0,0)=-1$ and 
\begin{align*}
  \frac{1}{s\zeta_2(s,0)}=\frac{\zeta_2(s,0)+1}{s\zeta_2(s,0)}-\frac{1}{s},
 \end{align*} 
 it follows that 
 $$\text{p.v.} \int_{x_{\text{out}}^b}^{x_{\text{in}}^b}\frac{1}{s\zeta_2(s,0)}ds=\int_{x_{\text{out}}^b}^{x_{\text{in}}^b} \frac{\zeta_2(s,0)+1}{s\zeta_2(s,0)}ds+\log\left(-\frac{x_{\text{out}}^b}{x_{\text{in}}^b}\right).$$

Using \eqref{CauchyPV-second-part} and $P_{\lambda}(s) =\lambda_0 +   \lambda_1 s  - s^{2}$, it is not difficult to see that
    \begin{align*}
      \text{p.v.} \int_{-\infty}^{+\infty}\frac{s}{P_\lambda(s)}ds &=  \int_{-1}^1 \frac{s}{P_\lambda(s)}ds+\left(\int_{-\infty}^{-1}+\int_{1}^{\infty}\right) \frac{P_\lambda(s)+s^2}{sP_\lambda(s)}ds\\
      &=-\frac{\lambda_1 \pi}{\sqrt{-4\lambda_0-\lambda_1^2}}.
    \end{align*}
    If we now plug in these expressions into \eqref{entry-exit-n=1}, we get the following entry-exit formula
 \begin{align}\eqlab{EE-remark}
\int_{x_{\text{out}}^b}^{x_{\text{in}}^b} \frac{\zeta_2(s,0)+1}{s\zeta_2(s,0)}ds+\log\left(-\frac{x_{\text{out}}^b}{x_{\text{in}}^b}\right)=\frac{\lambda_1 \pi}{\sqrt{-4\lambda_0-\lambda_1^2}}.
\end{align}

}
\end{remark}

\RH{
\begin{remark}\label{remark(a)(b)}(a)
  The first (resp. second) integral on the right hand side in \eqref{CauchyPV}  is the slow divergence integral \cite[Chapter 5]{DDR-book-SF} of \eqref{xy}, with $n=1$, associated with the normally repelling (resp. attracting) segment $[x_{\text{out}}^b,-\rho]$ (resp. $[\rho,x_{\text{in}}^b]$) of the curve of singularities $\{y=0\}$. These are integrals of the divergence of the vector field \eqref{xy}, for $\epsilon=0$ and calculated along $\{y=0\}$, where the integration variable is the time variable $\tau$ of the flow of the slow vector field \cite[Chapter 3]{DDR-book-SF}
  $$\frac{dx}{d\tau}=x^2\zeta_2(x,0).$$

  Note that the integral in the classical entry-exit formula \eqref{Delta0} is equal to the slow divergence integral of system \eqref{model0} computed along the segment $[x_{\text{out}},x_{\text{in}}]$.

(b) The Cauchy principal value in \eqref{CauchyPV-second-part}  is related to the divergence integral on the second cylinder (see Section \ref{section-blow-up}), defined in terms of $P_{\lambda}$. For more details, we refer the reader to Section \ref{section-proof-n=1} (see Remark \ref{remark-DI}). 
\end{remark}
}

When $n\ge 2$ in \eqref{xy}, we have the following result.
\begin{theorem}\label{thm-n>1}
 Fix any $k\in \mathbb N$ and consider the system \eqref{xy} with $n\ge 2$. Suppose that \eqref{f2n} and \eqref{P2ncond} are satisfied. Then $\int_{-\infty}^\infty \frac{v}{P_\lambda(v)}dv$ is well-defined and we suppose that:
 \begin{align}
 \int_{-\infty}^\infty \frac{v}{P_\lambda(v)}dv \ne 0.\eqlab{condI}
 \end{align}
 Then the entry-exit problem $I_{\text{in}}\ni x_{\text{in}}\mapsto \Delta(x_{\text{in}},\epsilon)\in I_{\text{out}}$ is not well-defined for all $0<\epsilon\ll 1$. In further details, let $(0,\e^{-1/z_{\text{in/out}}  })$ denote the intersection points of the forward and backward flow of $(x_{\text{in}},\delta)\in \Sigma_{\text{in}}$ and  $(x_{\text{out}},\delta)\in \Sigma_{\text{out
}}$, respectively, with $\{x=0\}$. Then 
\begin{equation}\eqlab{zinzout}
\begin{aligned}
 z_{\text{in}} =z_{\text{in}}(x_{\text{in}},\epsilon)&=\epsilon^{2n-1}\left(\frac{1}{-\int_0^{\infty}\frac{v}{P_\lambda(v)}dv}+\phi_{\text{in}}(\KUK{x_{\text{in}}},\epsilon,\epsilon \log \epsilon^{-1})\right),\\
 z_\text{out} =z_{\text{out}}(x_{\text{out}},\epsilon)&=\epsilon^{2n-1}\left(\frac{1}{\int_{-\infty}^0\frac{v}{P_\lambda(v)}dv}+\phi_{\text{out}}(\KUK{x_{\text{out}}},\epsilon,\epsilon \log \epsilon^{-1})\right),
\end{aligned}
\end{equation}
with each $\phi_\text{in/out} :I_\text{in/out} \times [0,\epsilon_0)\times [0,\epsilon_0 \log \epsilon_0^{-1})\rightarrow \mathbb R$ being $C^k$ smooth and $\phi_\text{in/out} (x,0,0)=0$ for all $x\in I_{\text{in/out}}$. Therefore if \eqref{condI} holds true, then $z_\text{in}\ne z_\text{out}$ for all $x_{\text{in}}\in I_{\text{in}},x_{\text{out}}\in I_{\text{out}}$, $0<\epsilon\ll 1$.
\end{theorem}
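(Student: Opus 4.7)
The approach mirrors the proof of Theorem \ref{Thm-n=1}: apply the singular transformation $y=\e^{-z^{-1}}$ of \cite{DMS2016}, blow up the degenerate point $(x,z,\epsilon)=(0,0,0)$ as in Section \ref{section-blow-up}, and track the forward flow of $(x_{\text{in}},\delta)\in\Sigma_{\text{in}}$ and the backward flow of $(x_{\text{out}},\delta)\in\Sigma_{\text{out}}$ until the first intersection with $\{x=0\}$. What changes for $n\ge 2$ is that the rational one-form $v/P_\lambda(v)\,dv$ is absolutely integrable at both $\pm\infty$ (since $P_\lambda(v)\sim -v^{2n}$), so the two semi-integrals $\int_{-\infty}^{0}$ and $\int_{0}^{\infty}$ become independent parameters; the Cauchy principal value cancellation that drove the $n=1$ analysis is no longer available.

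The main step is to compute $z_{\text{in}}$ in the rescaling chart $x=\epsilon x_2$. In the first cylinder chart (where $(x,z)$ are regular coordinates), the passage from $(x_{\text{in}},\delta)$ along the attracting branch $\{z=0,\,x>0\}$ is governed by a standard Fenichel-type Dulac map; by the resonant-normal-form analysis of Section \ref{section-proof-n=1}, this map is $C^k$-smooth in $(\epsilon,\epsilon\log\epsilon^{-1})$ and delivers the trajectory into the rescaling chart at $x_2=X$ with $z^{-1}=o(\epsilon^{1-2n})$ as $X\to\infty$ (a direct computation using $f_\lambda(s,0)\sim s^{2n}\zeta_{2n}(s,0)$ and $n\ge 2$ gives $z^{-1}=\mathcal O(\epsilon^{1-2n}X^{2-2n})$ at matching). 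In the rescaling chart, after rescaling time by a factor $\epsilon^{-2n}$, the system reads at leading order
\begin{align*}
    \frac{dx_2}{d\tau}=P_\lambda(x_2)+\mathcal O(\epsilon),\qquad
    \frac{dz}{d\tau}=-\epsilon^{1-2n} x_2 z^2+\mathcal O(\text{flat}),
\end{align*}
hence
\begin{align*}
    \frac{d(z^{-1})}{dx_2}=\frac{\epsilon^{1-2n} x_2}{P_\lambda(x_2)}+\mathcal O(\epsilon^{2-2n}).
\end{align*}
Using \eqref{P2ncond} to bound $|P_\lambda|$ uniformly away from $0$ on $\Lambda$ and the absolute convergence of $\int_0^\infty v/P_\lambda(v)\,dv$ for $n\ge 2$, integrating from $x_2=X$ down to $x_2=0$ and sending $X\to\infty$ yields
\begin{align*}
 z_{\text{in}}^{-1}=\epsilon^{1-2n}\Bigl(-\int_{0}^{\infty}\frac{v}{P_\lambda(v)}\,dv\Bigr)+o(\epsilon^{1-2n}),
\end{align*}
which inverts to the first line of \eqref{zinzout}. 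A symmetric argument on the backward flow from $(x_{\text{out}},\delta)$, integrating the same ODE from $x_2=-\infty$ up to $x_2=0$, produces the second line; note that $P_\lambda<0$ ensures $z_{\text{in}},z_{\text{out}}>0$, consistent with $y=\e^{-z^{-1}}\in(0,1)$.

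Once \eqref{zinzout} is established the non-existence statement is immediate: $z_{\text{in}}=z_{\text{out}}$ for small $\epsilon>0$ would force the leading coefficients to coincide, that is $-\int_0^{\infty}v/P_\lambda\,dv=\int_{-\infty}^{0}v/P_\lambda\,dv$, equivalent to $\int_{-\infty}^{\infty}v/P_\lambda(v)\,dv=0$, which is ruled out by \eqref{condI}.

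The main technical obstacle will be verifying the $C^k$-smoothness of the remainders $\phi_{\text{in/out}}$ in the joint variable $(\epsilon,\epsilon\log\epsilon^{-1})$. This requires the same resonant-normal-form machinery that produces the logarithms in the proof of Theorem \ref{Thm-n=1} — specifically, in the first cylinder chart, where the linearisation along $\{x>0,\,z=0\}$ has eigenvalues of equal magnitude and opposite sign. Spelling out all directional charts of the blow-up and checking that none of the lower-order corrections contaminate the leading-order estimate is a routine (though tedious) adaptation of Section \ref{section-proof-n=1}; the structural change is only that the one-sided integrals $\int_0^{\infty}$ and $\int_{-\infty}^{0}$ now decouple instead of combining into a single principal value.
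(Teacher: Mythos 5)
Your overall strategy---pass to $(x,z)$ via $y=\e^{-1/z}$, blow up, compute the intersection with $\{x=0\}$ in the rescaling chart $x=\epsilon x_2$, and compare the two one-sided integrals---is the same as the paper's in Section \ref{proof-n>2}, and both your leading-order computation of $z_{\text{in/out}}$ and the final deduction from \eqref{condI} are correct. The genuine gap is at the junction between the outer region $x=\mathcal O(1)$ and the rescaling chart: your ``integrate from $x_2=X$ down to $0$ and send $X\to\infty$'' with initial datum $z^{-1}=\mathcal O(\epsilon^{1-2n}X^{2-2n})$ is a formal matched-asymptotics step, and controlling the error uniformly in the overlap (together with its $C^k$-smoothness in $(\epsilon,\epsilon\log\epsilon^{-1})$) is the hard core of the proof, not a routine adaptation of the $n=1$ case. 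The paper resolves this corner with the additional \emph{spherical} blow-up \eqref{bu-spherical}, whose weight $2(n-1)$ on $z_2$ is dictated by the asymptotics $z_2(x)\sim 2(n-1)x^{2(n-1)}$ of \eqref{z2xb}; in the chart $\bar x=1$ the matching becomes a passage near a hyperbolic saddle at $(r_1,z_{21},\rho_{21})=(0,2(n-1),0)$ (eigenvalues $-1,-2(n-1),1$), normalized by inverting $z_{21}=1/(Z+L)$ and constructing the invariant manifold $\overline L$ from the partial solutions \eqref{J-def} and \eqref{Kexpr} (Lemma \ref{lemma-very-important}); the unstable manifold of that saddle is exactly the graph $z_{22}=1/\bigl(-\int_{x_2}^\infty v/P_\lambda(v)\,dv\bigr)$ in the chart $\bar\rho_2=1$ (Lemma \ref{lemma-unstable-manifold}), which is what legitimizes your $X\to\infty$ limit. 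This machinery is structurally different from Section \ref{section-proof-n=1}, where the corresponding saddle sits at the origin of the fiber coordinate and the substitution $y_2=\e^{-1/z_2}$ suffices.

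A second, related inaccuracy: you attribute the $\epsilon\log\epsilon^{-1}$ terms to the first cylinder chart, ``where the linearisation along $\{x>0,\,z=0\}$ has eigenvalues of equal magnitude and opposite sign.'' That resonance is present but is common to $n=1$ and $n\ge 2$ and is already disposed of by Lemma \ref{lemma-boundary} via \cite{DMS2016}. The logarithms specific to $n\ge2$ arise at the saddle described above, from the resonance $-2(n-1)=2(n-1)\cdot(-1)+0\cdot 1$: the coefficient of $r_1^{2(n-1)}$ in the expansion of $\zeta_{2n}(r_1,0)^{-1}$ forces the $r_1^{2(n-1)}\log r_1^{-1}$ term in \eqref{Kexpr}, so the relevant invariant manifold is not smooth in $r_1$ alone. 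Without the second blow-up and this saddle normal form, the remainder estimates in your matching, and hence the smoothness claim for $\phi_{\text{in/out}}$ in \eqref{zinzout}, are unsupported.
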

We prove Theorem \ref{thm-n>1} in Section \ref{proof-n>2}.
\begin{remark}
In contrast to Theorem \ref{Thm-n=1}, the contraction/expansion towards $y=0$ is dominated by the passage near $x=0$ for $n\ge 2$. Indeed, on either side of $x=0$,  transition maps $y\mapsto y_+$ between different sections $\{x=x_{0}\}$ and $\{x=x_1\}$, \KUK{$x_0x_1>0$}, are to leading order given by $y_+=\exp(\epsilon^{-1} I)$ with 
\begin{align*}
    \KUK{I = \int_{x_0}^{x_1} -\frac{x}{f_\lambda(x,0)}dx}.
\end{align*}
This follows from the theory of slow-divergence integrals, see \cite{DDR-book-SF}.
On the other hand, if we put $x=\epsilon x_2$, then on either side of $x_2=0$,  transition maps $y\mapsto y_+$ between different sections $\{x_2=x_{20}\}$ and $\{x_2=x_{21}\}$, $x_{20}x_{21}>0$, are to leading order given by $\exp(\epsilon^{-2n+1} I_{2})$ with
\begin{align*}
    I_{2}=\int_{x_{20}}^{x_{21}} -\frac{x_2}{P_\lambda(x_2)}dx_2.
\end{align*} 
We obtain this by substituting $x=\epsilon x_2$ into \eqref{xy}; for the purpose of this remark, we have just retained the dominating terms \KUK{(and ignored $g$ in \eqref{xy})}.  Since $-2n+1<-1$ for $n\ge 2$ this illustrates our claim. For $n=1$, the contractions/expansions for $x=\mathcal O(1)$ and $x=\mathcal O(\epsilon)$ are comparable. 
\end{remark}

The sign of the integral \eqref{condI} determines the sign of $z_{\text{in}}-z_{\text{out}}$. In particular, if 
\begin{align*}
     \int_{-\infty}^\infty \frac{v}{P_\lambda(v)}dv<0,
\end{align*}
then 
\begin{align*}
 0<   z_{\text{in}}<z_{\text{out}},
\end{align*}
recall \eqref{zinzout}, \NEW{for all $0<\epsilon\ll 1$, $x_{\text{in}/\text{out}}\in I_{\text{in}/\text{out}}$}. 
This follows from \eqref{zinzout} using $P_\lambda(x_2)<0$. 
Since the problem is planar, a simple corollary of this fact is that the forward orbit of $(x_{\text{in}},\delta)\in \Sigma_{\text{in}}$ for any $x_{\text{in}}\in I_{\text{in}}$ intersects $\{x=x_{\text{out}}\}$ with $y=\mathcal O(\e^{-c\epsilon^{-2n+1}})$, \KUK{$c>0$}, for $\epsilon\to 0$ for any $x_{\text{out}}\in I_{\text{out}}$. 

On the other hand, if 
\begin{align*}
     \int_{-\infty}^\infty \frac{v}{P_\lambda(v)}dv>0,
\end{align*}
then 
\begin{align*}
    z_{\text{in}}>z_{\text{out}}>0,
\end{align*}
and the forward orbit of $(x_{\text{in}},\delta)\in \Sigma_{\text{in}}$ for any $x_{\text{in}}\in I_{\text{in}}$
intersects $\{y=\delta\}$ with $x=o(1)<0$ for $\epsilon\to 0$. 

The remaining case
\begin{align*}
     \int_{-\infty}^\infty \frac{v}{P_\lambda(v)}dv=0,
\end{align*}
is similar to classical canard situation, see \cite{DDR-book-SF}, when we treat $\lambda$ as control parameters. Indeed, we have the following.

\begin{cor}
   {Consider as in Theorem \ref{thm-n>1} any $k\in\mathbb N$, $n\ge 2$ and denote $\Delta(x,\epsilon)$ by $\Delta(x,\epsilon;\lambda)$ to emphasize its dependency on $\lambda=(\lambda_0,\ldots,\lambda_{2n-1})\in \Lambda$. Now, let $l\in \{0,\ldots,2n-1\}$ and $\lambda^0\in \Lambda$ be such that 
   \begin{align}
       \int_{-\infty}^\infty \frac{v}{P_{\lambda^0}(v)}dv = 0\quad \mbox{and}\quad \frac{\partial }{\partial \lambda_l }  \int_{-\infty}^\infty \frac{v}{P_{\lambda}(v)} dv\bigg|_{\lambda=\lambda^0} \ne  0.\eqlab{cond111}
   \end{align}
   Then there is an $(x_\text{in},x_\text{out},\epsilon)$-dependent local embedding defined by $$\widehat \Lambda \ni \widehat \lambda\mapsto \lambda = \overline \lambda(x_\text{in},x_\text{out},\epsilon,\widehat \lambda)\in \mathbb R^{2n},$$ with $\widehat \Lambda\subset \mathbb R^{2n-1}$ being a sufficiently small neighborhood of the origin in $\mathbb R^{2n-1}$, and where $\overline \lambda$ is $C^k$-smooth w.r.t. $$\KUK{(x_\text{in},x_{\text{out}},\epsilon,\epsilon \log \epsilon^{-1},\widehat \lambda)\in I_{\text{in}}\times I_{\text{out}}\times [0,\epsilon_0)\times [0,\epsilon_0 \log \epsilon_0^{-1}) \times \widehat \Lambda,\quad 0<\epsilon_0\ll 1},$$ such that 
   \begin{align*}
       \overline \lambda(x_\text{in},x_\text{out},0,0)=\lambda^0\quad \forall\,x_{\text{in}}\in I_{\text{in}},\,x_{\text{out}}\in I_{\text{out}},
   \end{align*}
   and 
       \begin{align*}
    \Delta(x_{\text{in}},\epsilon;\overline \lambda(x_{\text{in}},x_{out},\epsilon,\widehat \lambda))=x_{\text{out}}\quad \forall\,(x_\text{in},x_{\text{out}},\epsilon,\widehat \lambda)\in I_{\text{in}}\times I_{\text{out}}\times [0,\epsilon_0)\times \widehat \Lambda. 
    \end{align*}
    
    }
\end{cor}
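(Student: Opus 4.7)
The plan is to recognize the matching condition $\Delta(x_{\text{in}},\epsilon;\lambda)=x_{\text{out}}$ as the single scalar equation $z_{\text{in}}(x_{\text{in}},\epsilon;\lambda)=z_{\text{out}}(x_{\text{out}},\epsilon;\lambda)$ on $\{x=0\}$, and then solve it for $\lambda_l$ by the implicit function theorem. By Theorem \ref{thm-n>1}, the forward orbit of $(x_{\text{in}},\delta)$ and the backward orbit of $(x_{\text{out}},\delta)$ intersect $\{x=0\}$ at $(0,\e^{-1/z_{\text{in}}})$ and $(0,\e^{-1/z_{\text{out}}})$ respectively; by uniqueness of solutions for the smooth ODE \eqref{xy}, these two orbits coincide (so that the forward orbit of $(x_{\text{in}},\delta)$ reaches $(x_{\text{out}},\delta)$) if and only if the crossing points agree, i.e.\ $z_{\text{in}}=z_{\text{out}}$.

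Substituting \eqref{zinzout}, dividing through by the common factor $\epsilon^{2n-1}$, and multiplying through by the (uniformly in $\Lambda$) nonzero positive quantities $a(\lambda):=-\int_0^\infty v/P_\lambda(v)\,dv$ and $b(\lambda):=\int_{-\infty}^0 v/P_\lambda(v)\,dv$ (positivity follows from $P_\lambda<0$), the condition $z_{\text{in}}=z_{\text{out}}$ becomes $G=0$, where
\begin{equation*}
G(x_{\text{in}},x_{\text{out}},\mu,\nu,\lambda):=b(\lambda)-a(\lambda)+a(\lambda)\,b(\lambda)\bigl(\phi_{\text{in}}(x_{\text{in}},\mu,\nu)-\phi_{\text{out}}(x_{\text{out}},\mu,\nu)\bigr),
\end{equation*}
using $b(\lambda)-a(\lambda)=\int_{-\infty}^\infty v/P_\lambda(v)\,dv$. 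Because $a,b$ are smooth in $\lambda$ and $\phi_{\text{in/out}}$ are $C^k$ in $(x,\mu,\nu)$, the function $G$ is $C^k$. At $(x_{\text{in}},x_{\text{out}},0,0,\lambda^0)$, $\phi_{\text{in/out}}(\cdot,0,0)\equiv 0$, so the first part of \eqref{cond111} gives $G=b(\lambda^0)-a(\lambda^0)=0$, while the second part gives
\begin{equation*}
\partial_{\lambda_l}G\big|_{(x_{\text{in}},x_{\text{out}},0,0,\lambda^0)}=\partial_{\lambda_l}(b-a)\big|_{\lambda^0}=\partial_{\lambda_l}\int_{-\infty}^\infty\frac{v}{P_\lambda(v)}\,dv\bigg|_{\lambda^0}\ne 0.
\end{equation*}
Writing $\lambda=(\lambda_l,\check\lambda)\in\mathbb R\times\mathbb R^{2n-1}$ and $\widehat\lambda:=\check\lambda-\check\lambda^0$, the $C^k$ implicit function theorem produces $\lambda_l=L(x_{\text{in}},x_{\text{out}},\mu,\nu,\widehat\lambda)$ with $L(\cdot,\cdot,0,0,0)=\lambda_l^0$, defined on a sufficiently small neighborhood of the origin in $\widehat\lambda$. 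The required embedding is then
\begin{equation*}
\overline\lambda(x_{\text{in}},x_{\text{out}},\epsilon,\widehat\lambda):=\bigl(L(x_{\text{in}},x_{\text{out}},\epsilon,\epsilon\log\epsilon^{-1},\widehat\lambda),\;\check\lambda^0+\widehat\lambda\bigr),
\end{equation*}
which satisfies $\overline\lambda(x_{\text{in}},x_{\text{out}},0,0)=\lambda^0$ and, by construction of $G$, enforces $\Delta(x_{\text{in}},\epsilon;\overline\lambda)=x_{\text{out}}$.

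The main obstacle is the first step: rigorously identifying the Dulac condition $\Delta(x_{\text{in}},\epsilon;\lambda)=x_{\text{out}}$ with the scalar equation $z_{\text{in}}=z_{\text{out}}$. One has to check that Theorem \ref{thm-n>1} actually delivers the crossings with $\{x=0\}$ both for the forward orbit of $(x_{\text{in}},\delta)$ and for the backward orbit of $(x_{\text{out}},\delta)$ (which is where the negativity estimates behind Lemma \ref{lemma-condition} and the smallness of the $yg$-term near $\{y=0\}$ are used), and then invoke uniqueness of ODE solutions at the (smooth) intersection point despite the fact that $\dot y=-xy$ vanishes there. Once this reduction is established, the rest is a routine application of the $C^k$ implicit function theorem with $\mu=\epsilon$ and $\nu=\epsilon\log\epsilon^{-1}$ treated as independent variables.
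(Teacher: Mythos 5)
Your proposal is correct and follows essentially the same route as the paper: reduce $\Delta(x_{\text{in}},\epsilon;\lambda)=x_{\text{out}}$ to $z_{\text{in}}=z_{\text{out}}$, use \eqref{zinzout} to rewrite this as $\int_{-\infty}^{\infty} v/P_\lambda(v)\,dv = o(1)$ with the $o(1)$ term $C^k$-smooth in $(x_{\text{in}},x_{\text{out}},\epsilon,\epsilon\log\epsilon^{-1},\lambda)$, and conclude by the implicit function theorem using \eqref{cond111}. You simply spell out the algebra (the factors $a(\lambda)$, $b(\lambda)$ and the function $G$) that the paper leaves implicit.
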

 \begin{proof}
   {We have $x_{\text{out}}=\Delta(x_{\text{in}},\epsilon;\lambda)$ if and only if $z_{\text{in}}(x_{\text{in}},\epsilon)=z_{\text{out}}(x_{\text{out}},\epsilon)$. From \eqref{zinzout}, this reduces to
     \begin{align*}
         \int_{-\infty}^\infty \frac{v}{P_\lambda(v)}dv = o(1).
     \end{align*}
     Here $o(1)$ is a $C^k$-smooth function w.r.t. $x_{\text{in}},x_{\text{out}},\epsilon,\epsilon \log \epsilon^{-1}$ and $\lambda\in \Lambda$, which vanishes for $\epsilon=0$. This follows from the proof of Theorem \ref{thm-n>1}. The result then follows from a simple application of the implicit function theorem. 
     }
 \end{proof}
\begin{remark}
{Let $\lambda^0=(\lambda_0^0,\ldots,\lambda_{2n-1}^0)\in \Lambda$ be so that $\lambda_j^0=0$ for all odd $j\in \{1,3,\ldots,2n-1\}$. Then the first condition in \eqref{cond111} clearly holds since the integrand is an odd function. Moreover, for any odd \KUK{$l\in \{1,3,\ldots,2n-1\}$}, we find that
 \begin{align*}
     \frac{\partial }{\partial \lambda_l }  \int_{-\infty}^\infty \frac{v}{P_{\lambda}(v)} dv\bigg|_{\lambda=\lambda^0} = -\int_{-\infty}^\infty \frac{v^{1+l}}{P_{\lambda^0}(v)^2}dv,
 \end{align*}
which is negative since the integrand is an even function of $v$. }
\end{remark}

\section{Blow-up}\label{section-blow-up}
Following \cite{DMS2016}, we define $z\ge 0$ through
\begin{align}\eqlab{yz}
 y = \begin{cases} \e^{-1/z}& \mbox{for $z>0$},\\
 0 & \mbox{for $z=0$}.
 \end{cases}
\end{align}
Then \eqref{xy} becomes
\begin{equation}\eqlab{xz}
\begin{aligned}
 \dot x &= \epsilon f_\lambda(x,\epsilon) + \e^{-1/z} g(x,\e^{-1/z},\epsilon),\\
 \dot z &= - x z^2,\\
 \dot \epsilon &=0,
\end{aligned}
\end{equation}
after augmenting a trivial equation for $\epsilon$. 
Notice that $(x,0,0)$ defines a line of degenerate singularities, the linearization having only zero eigenvalues. The transformation \eqref{yz} enables the use of blow-up  for the entry-exit problem. In particular, we consider the cylindrical blow-up   transformation
\begin{align}\eqlab{cylbu}
 \rho\ge 0,\,(\bar z,\bar \epsilon)\in \mathbb S^1 \mapsto \begin{cases}
                                                    z &= \rho \bar z,\\
                                                    \epsilon &=\rho \bar \epsilon,
                                                   \end{cases}
\end{align}
leaving $x$ fixed, and use a desingularization corresponding to division of the pull-back vector field by $\rho$. Note that $\bar z\ge 0$ and $\bar\epsilon\ge 0$. We use two separate charts $\bar z=1$ and $\bar \epsilon=1$ with chart-specific coordinates $(\rho_1,\epsilon_1)$ and $(z_2,\rho_2)$ defined by
\begin{align}
 \bar z = 1:\quad \begin{cases}
                   z &=\rho_1,\\
                   \epsilon &=\rho_1\epsilon_1,
                  \end{cases}\nonumber \\
  \bar \epsilon = 1:\quad \begin{cases}
                   z &=\rho_2 z_2,\\
                   \epsilon &=\rho_2.
                  \end{cases}\eqlab{z2here} \end{align}
                  Here, the desingularization is achieved by division of the vector field by $\rho_1$ resp. $\rho_2$. The change of coordinates is well-defined for $z_2>0$ and are given by the expressions
                  \begin{align}\eqlab{cc1}
                   \begin{cases}
                    \rho_1 &= \rho_2 z_2,\\
                    \epsilon_1 &= z_2^{-1}.
                   \end{cases}
                  \end{align}

 \begin{figure}[!ht] 
\begin{center}
{\includegraphics[width=.95\textwidth]{./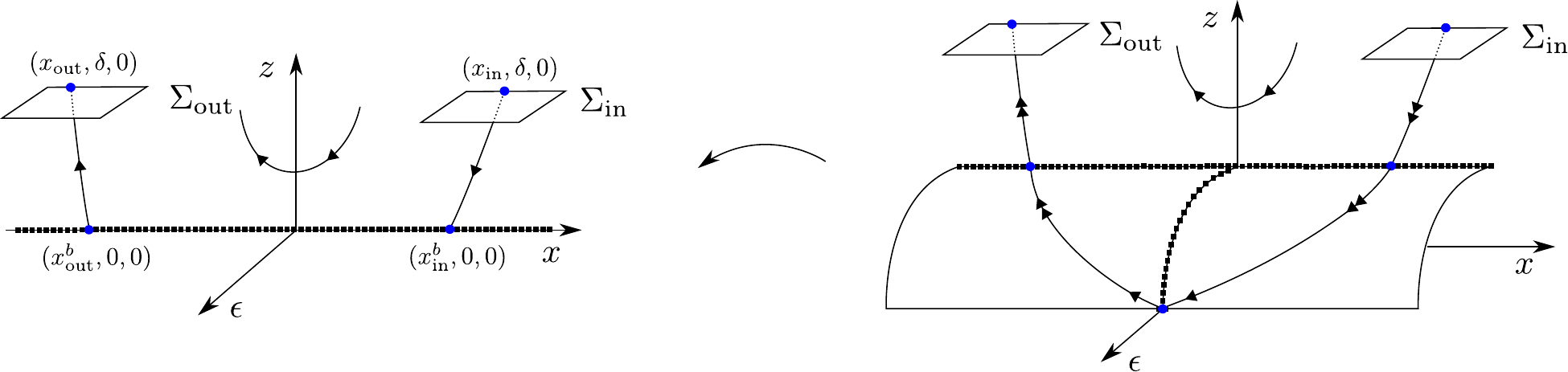}}
\end{center}
 \caption{Cylindrical blowup of the degenerate line $(x,z,\epsilon)=(x,0,0)$ by \eqref{cylbu}.}
\figlab{blowup1}
\end{figure}

                  \subsection{Chart $\bar z=1$}
                  In this chart, we obtain the desingularized vector field defined by
                 \begin{equation}\eqlab{xz1}
\begin{aligned}
 \dot x &= \epsilon_1 f_\lambda(x,\rho_1\epsilon_1) + \rho_1^{-1} \e^{-1/\rho_1} g(x,\e^{-1/\rho_1},\rho_1\epsilon_1),\\
 \dot \rho_1 &= - x \rho_1,\\
  \dot \epsilon_1 &=x \epsilon_1,
\end{aligned}
\end{equation}
which extends smoothly to $\rho_1=0$. In particular, the last term in the equation for $x$ is $C^\infty$ flat w.r.t. $\rho_1=0$, uniformly w.r.t. $x$ and $\epsilon_1$. The point $(x,0,0)$, with $x\ne 0$, is therefore a semi-hyperbolic singularity of \eqref{xz1}, the linearization having eigenvalues $0,-x,x$. Moreover, $\epsilon_1=0$ and $\rho_1=0$ define two invariant sets of \eqref{xz1}. 
Within the latter, we find that
\begin{align*}
 \dot x &= \epsilon_1 f_\lambda(x,0) = \epsilon_1 x^{2n} \zeta_{2n}(x,0),\\
 \dot \epsilon_1 &=x \epsilon_1,
\end{align*}
or as a first order system for $\epsilon_1\ne 0$ 
\begin{align}
 \frac{dx}{d\epsilon_1} &={x^{2n-1} \zeta_{2n}(x,0)},\eqlab{eps1eqn}
\end{align}
which extends smoothly to $\epsilon_1=0$. 
 Let $x_{\text{out},1}=x_{\text{out},1}(x_{\text{in}}^b,\epsilon_1)$, $\epsilon_1>0$, denote the unique solution to \eqref{eps1eqn} with the initial condition $ x_{\text{out},1}(x_{\text{in}}^b,0)=x_{\text{in}}^b$, where $x_{\text{in}}^b>0$ is defined in Section \ref{subsection-statement}. From \eqref{f2n} and \eqref{eps1eqn} it follows that $x_{\text{out},1}$ is a decreasing function in $\epsilon_1$. By the separation of variables, $x_{\text{out},1}$ is implicitly defined by
\begin{align}\eqlab{eps1sol}
 \epsilon_1 = \int_{x_{\text{in}}^b}^{x_{\text{out},1}} \frac{1}{s^{2n-1} \zeta_{2n}(s,0)}ds.
\end{align}

We now define the following sections: 
\begin{align*}
\Sigma_{\text{in},1}\,&:\quad x\in I_{\text{in}},\,\rho_1=\delta_1,\,\epsilon_1 \in [0,\nu), \\
\Sigma_{\text{out},1}\,&:\quad x\in I_{\text{out},1},\,\rho_1 \in [0,\nu),\,\epsilon_1=\delta_1,
\end{align*}
where $I_{\text{in}}$ is defined in Section \ref{subsection-statement}, $I_{\text{out},1}$ is an appropriate interval, $\delta_1=-1/\log\delta$, with $\delta\in (0,1)$ small and fixed in Section \ref{subsection-statement}, and $\nu>0$ is small enough. Notice that $y=\delta$ corresponds to $\rho_1=\delta_1$, due to \eqref{yz}.
\begin{lemma}\label{lemma-boundary}
Fix any $k\in \mathbb N$.
Then the transition map 
\begin{align*}
\Sigma_{\textnormal{in},1}&\rightarrow \Sigma_{\textnormal{out},1},\\
(x_{\text{in}},\delta_1,\epsilon_1)&\mapsto (x_+(x_{\text{in}},\epsilon_1), \epsilon_1,\delta_1),
\end{align*}
given by the forward flow of \eqref{xz1}, is well-defined for all $\epsilon_1\in ]0,\epsilon_0[$ with $\epsilon_0>0$ small enough. In particular, $x_+$ takes the following form:
\begin{align*}
 x_+(x_{\text{in}},\epsilon_1) =x_{\textnormal{out},1}(x_{\text{in}}^b(x_{\text{in}}),\delta_1)+ o(1),
\end{align*} 
 where $o(1)$ is $C^k$-smooth w.r.t. $(x_{\text{in}},\epsilon_1, \epsilon_1 \log \epsilon_1)$ and is zero for all $x_{\text{in}}\in I_{\text{in}}$ when $\epsilon_1=0$. 
\end{lemma}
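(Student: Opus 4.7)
The transition map will be constructed by splitting the forward orbit from $\Sigma_{\textnormal{in},1}$ to $\Sigma_{\textnormal{out},1}$ into three pieces, exploiting the two invariant sets $\{\epsilon_1=0\}$ and $\{\rho_1=0\}$ of \eqref{xz1}. These intersect along the line $L=\{(x,0,0):x>0\}$ of semi-hyperbolic equilibria with eigenvalues $0,-x,x$, and a direct computation shows that $\rho_1\epsilon_1\equiv \epsilon$ is an exact first integral. On $\{\epsilon_1=0\}$ the substitution $y=\e^{-1/\rho_1}$ conjugates the flow of \eqref{xz1} to the layer problem \eqref{dydx}, so the orbit through $(x_{\text{in}},\delta_1,0)$ converges as $t\to+\infty$ to $(x_{\text{in}}^b(x_{\text{in}}),0,0)\in L$. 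On $\{\rho_1=0\}$ the flow is governed for $\epsilon_1>0$ by the regular ODE \eqref{eps1eqn}; its unstable-fibre orbit leaving $(x_{\text{in}}^b,0,0)$ is $\epsilon_1\mapsto(x_{\textnormal{out},1}(x_{\text{in}}^b,\epsilon_1),0,\epsilon_1)$, and reaches $\Sigma_{\textnormal{out},1}$ precisely at $\epsilon_1=\delta_1$. The concatenation of these two heteroclinic arcs is the singular skeleton whose endpoint provides the leading term $x_{\textnormal{out},1}(x_{\text{in}}^b(x_{\text{in}}),\delta_1)$.

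To upgrade the skeleton to a statement valid for $\epsilon_1>0$, I fix a small $\sigma\in(0,\delta_1)$, independent of $\epsilon_1$, and insert auxiliary sections
\begin{align*}
\Sigma^-\,&:\ \rho_1=\sigma,\ \epsilon_1\in[0,\sigma),\ x\in J^-,\\
\Sigma^+\,&:\ \epsilon_1=\sigma,\ \rho_1\in[0,\sigma),\ x\in J^+,
\end{align*}
with $J^{\pm}$ small neighbourhoods of $x_{\text{in}}^b$ and $x_{\textnormal{out},1}(x_{\text{in}}^b,\sigma)$ respectively. The transition $\Sigma_{\textnormal{in},1}\to\Sigma^-$ is a regular perturbation of the flow on $\{\epsilon_1=0\}$ over bounded desingularized time, and since the term $\rho_1^{-1}\e^{-1/\rho_1}g(\cdot)$ extends $C^\infty$-smoothly (in fact $C^\infty$-flatly) to $\rho_1=0$, this step yields a $C^\infty$-smooth map in $(x_{\text{in}},\epsilon_1)$. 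Analogously, the transition $\Sigma^+\to\Sigma_{\textnormal{out},1}$ is a bounded-time regular perturbation of \eqref{eps1eqn} and is $C^\infty$ in its arguments.

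The heart of the proof is the corner passage $\Sigma^-\to\Sigma^+$ near $L$. Treating $x$ as a slowly varying centre coordinate, the $(\rho_1,\epsilon_1)$-block becomes a one-parameter family of $1\!:\!-1$ resonant hyperbolic saddles with invariant axes $\{\rho_1=0\}$, $\{\epsilon_1=0\}$ and resonant monomial $\rho_1\epsilon_1=\epsilon$. Following the Dulac normal-form strategy of \cite{DMS2016}, I would reduce the system on a neighbourhood of $L$ to a finite-order normal form in which the Dulac map from $\{\rho_1=\sigma\}$ to $\{\epsilon_1=\sigma\}$ can be integrated explicitly; the outcome is a transition that is $C^k$-smooth in $(x,\epsilon_1,\epsilon_1\log\epsilon_1)$ and, for $\epsilon_1=0$, reduces to the identity in $x$. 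Composing the three maps and tracking leading terms through the singular skeleton then yields the stated expression for $x_+$. The main obstacle is exactly this corner passage: the $1\!:\!-1$ resonance forces the joint smoothness variable $\epsilon_1\log\epsilon_1$ (so pure $C^\infty$-dependence on $\epsilon_1$ is generically lost), and the $C^\infty$-flat but nonpolynomial term $\rho_1^{-1}\e^{-1/\rho_1}g$ obstructs any analytic normal form, forcing finite-order normal forms with carefully controlled remainders; this is why the smoothness is only claimed for each fixed but arbitrary $k$.
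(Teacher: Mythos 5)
Your proposal follows essentially the same route as the paper: the paper's entire proof of this lemma is the observation that one is passing near a line of saddles whose nonzero eigenvalues $\pm x$ have equal magnitude, together with a citation of \cite{DMS2016}, which is exactly the singular-skeleton decomposition and resonant corner analysis you sketch. One small correction: the singular ($\epsilon_1=0$) limit of your corner map $\Sigma^-\to\Sigma^+$ is not the identity in $x$ but the concatenation of the two heteroclinic arcs (the flat layer contribution from $\rho_1=\sigma$ down to the equilibrium line, followed by the \eqref{eps1eqn}-flow from $\epsilon_1=0$ to $\epsilon_1=\sigma$), which is in fact what your final composition needs in order to produce the stated leading term $x_{\textnormal{out},1}(x_{\text{in}}^b(x_{\text{in}}),\delta_1)$.
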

\begin{proof}
Since we deal with the passage near the line of saddle singularities with positive and negative eigenvalues of equal magnitude, this result follows from \cite{DMS2016}.
\end{proof}

                  \subsection{Chart $\bar \epsilon=1$}\label{subsection-chart-bar-epsilon}
                 In this chart, we obtain the desingularized vector field defined by
                 \begin{equation}\eqlab{xz2}
\begin{aligned}
 \dot x &=  f_\lambda(x,\rho_2) + \rho_2^{-1} \e^{-1/(\rho_2 z_2)} g(x,\e^{-1/(\rho_2z_2)},\rho_2),\\
   \dot z_2 &=-x z_2^2,\\
   \dot \rho_2 &= 0.   
\end{aligned}
\end{equation}
  The invariant behavior on the plane $z_2=0$ is given by 
 \begin{equation}
 \begin{aligned}
 \dot x &=  f_\lambda(x,\rho_2),\nonumber\\
   \dot \rho_2 &= 0. \nonumber  
\end{aligned}
\end{equation}
The solutions are horizontal lines in the $(x,\rho_2)$-plane and $f_\lambda(x,\rho_2)<0$ for $x\in I$, $\lambda\in \Lambda$ and $\rho_2>0$ small enough (see Lemma \ref{lemma-condition}).

Notice that within $\rho_2=0$ we have
\begin{align*}
 -z_2^{-2}\frac{dz_2}{dx} = \frac{1}{x^{2n-1} \zeta_{2n}(x,0)},
\end{align*}
for $z_2\ne 0$.
Here we have used $f_\lambda(x,0)=x^{2n} \zeta_{2n}(x,0)$. 
We can solve this equation for $z_2=z_2(x)$:
\begin{align*}
 z_2(x) = \frac{1}{z_2(x_{0})^{-1} + \int_{x_{0}}^{x} \frac{1}{s^{2n-1}\zeta_{2n}(s,0)}ds },
\end{align*}
using an initial condition at $x=x_{0}$. The specific solution
\begin{align}\eqlab{z2xb}
 z_2(x) = \frac{1}{\int_{x_{\text{in}}^b}^{x}\frac{1}{s^{2n-1} \zeta_{2n}(s,0)}ds},
\end{align}
corresponds to \eqref{eps1sol} (cf. \eqref{cc1}) with $z_2(x)\rightarrow \infty$ as $x\rightarrow (x_{\text{in}}^b)^{-}$. 
Notice also that $z_2(x)$ in \eqref{z2xb} tends to 0 as $x\to 0^+$. \KUK{In fact, one can easily show (using $\zeta_{2n}(0,0)=-1$) that $\lim_{x\to 0^+} z_2(x) \log x^{-1} =1$ for $n=1$ whereas $\lim_{x\to 0^+} z_2(x) x^{-2(n-1)}=2(n-1)$ for $n\ge 2$, see \figref{new}. (This is the first indication that $n=1$ and $n\ge 2$ are different.) }

 \begin{figure}[!ht] 
\begin{center}
{\includegraphics[width=.45\textwidth]{./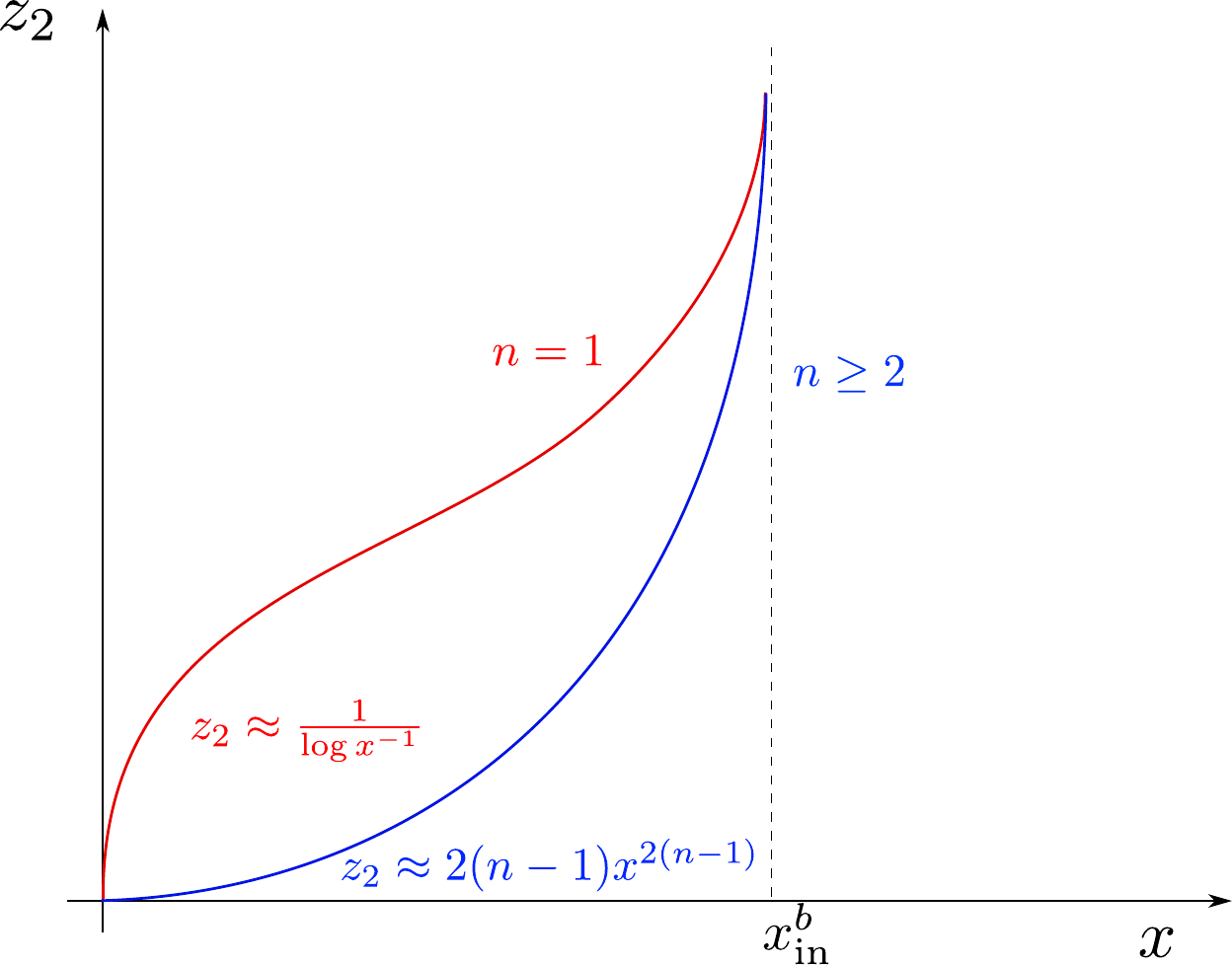}}
\end{center}
 \caption{\NEW{Sketch of \eqref{z2xb} for $n=1$ (red) and $n\ge 2$ (blue).} }
\figlab{new}
\end{figure}

The point $(0,0,0)$ is a degenerate singularity of \eqref{xz2}, the linearization having only zero eigenvalues. We therefore introduce a spherical blow-up for any $n\ge 2$:
\begin{align}\eqlab{bu-spherical}
 r\ge 0,\,(\bar x,\bar z_2,\bar \rho_2) \in \mathbb S^2 \mapsto \begin{cases}
                                                    x &= r\bar x,\\
                                                    z_2 &= r^{2(n-1)} \bar z_2,\\
                                                    \rho_2 &=r \bar \rho_2,
                                                   \end{cases}
\end{align}
and use a desingularization corresponding to division of the right hand side by $r^{2n-1}$, see \figref{blowup2}.

For $n=1$, we use a cylindrical blow-up:
\begin{align}\eqlab{bu2}
 r\ge 0,\,(\bar x,\bar \rho_2) \in \mathbb S^1 \mapsto \begin{cases}
                                                    x &= r\bar x,\\                                                                                                        \rho_2 &=r \bar \rho_2,
                                                   \end{cases}
\end{align}
leaving $z_2$ fixed, and a desingularization corresponding to division of the right hand side by $r$.

 \begin{figure}[!ht] 
\begin{center}
{\includegraphics[width=.95\textwidth]{./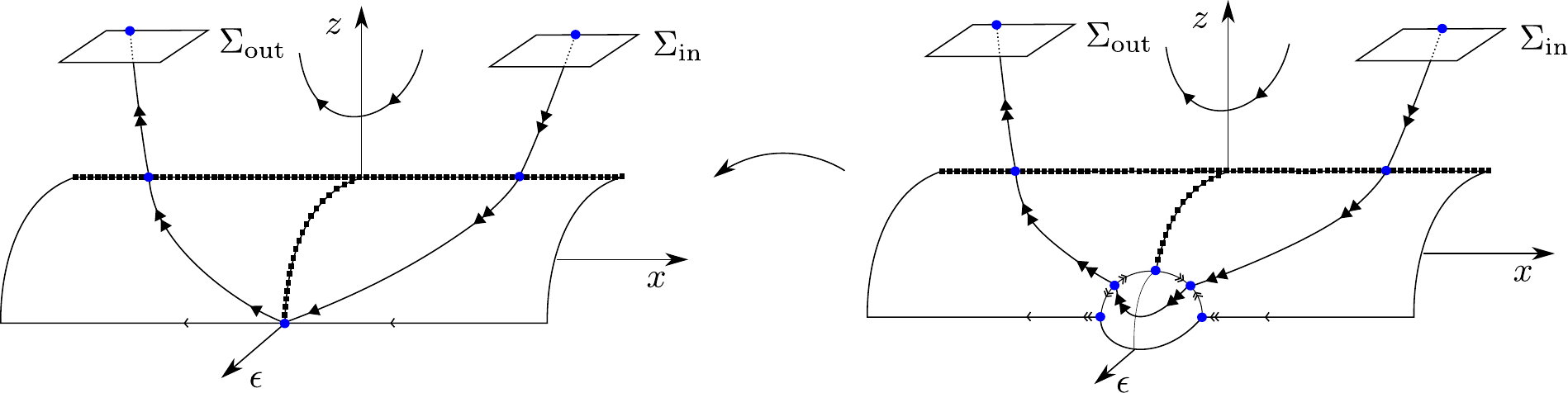}}
\end{center}
 \caption{Illustration of the spherical blow-up \eqref{bu-spherical} for $n\ge 2$. }
\figlab{blowup2}
\end{figure}

We consider two separate charts for any $n\in \mathbb N$: $\bar x=1$ and $\bar \rho_2=1$ with chart-specific coordinates $(r_1,z_{21},\rho_{21})$ and $(x_2,z_{22},r_2)$, respectively, defined by
\begin{align*}
 \bar x = 1 :\quad \begin{cases}
                    x &=r_1,\\
                    z_2 &=r_1^{2(n-1)} z_{21},\\
                    \rho_2 &= r_1 \rho_{21},
                   \end{cases}\\
     \bar \rho_2 = 1 :\quad \begin{cases}
                    x &=r_2 x_2,\\
                    z_2 &=r_2^{2(n-1)} z_{22},\\
                    \rho_2 &= r_2.
                   \end{cases}  \\             
\end{align*}
In each chart, the desingularization is achieved through division of the right hand side by $r_i^{2n-1}$, $i=1,2$. Notice that for $n=1$, $z_2=z_{21}=z_{22}$ is fixed. The change of coordinates between $\bar x=1$ and $\bar \rho_{2}=1$ is well-defined for $x_2>0$ and given by the expressions:
\begin{align}\eqlab{cc12}
 \begin{cases} r_1 &= r_2 x_2,\\
 z_{21} &= z_{22} x_2^{-2(n-1)},\\
 \rho_{21} &= x_{2}^{-1}.
 \end{cases}
\end{align}
We divide the analysis into $n=1$ (Section \ref{section-proof-n=1}) and $n\ge 2$ (Section \ref{proof-n>2}).

\section{Proof of Theorem \ref{Thm-n=1}}\label{section-proof-n=1}
In this section, we prove Theorem \ref{Thm-n=1}. We therefore consider the system \eqref{xz}, with $n=1$ and $P_{\lambda}(x_2) =\lambda_0 +   \lambda_1 x_2  - x_2^{2}$, and assume that \eqref{f2n} and \eqref{P2ncond} are satisfied.

Using \eqref{flambda} and \eqref{xz2}, the desingularization of \eqref{xz} in the chart $\bar \epsilon=1$ yields
\begin{equation}\nonumber
\begin{aligned}
 \dot x &=  \rho_2^{2} P_\lambda(\rho_2^{-1} x) +x^{2} ( \zeta_2(x,\rho_2)+1) + \rho_2^{-1} \e^{-1/(\rho_2 z_2)} g(x,\e^{-1/(\rho_2z_2)},\rho_2),\\
   \dot z_2 &=-x z_2^2,\\
   \dot \rho_2 &= 0.
\end{aligned}
\end{equation}
We apply now the blow-up \eqref{bu2}, working in the charts $\bar x=1$ and $\bar \rho_2=1$. Recall that $z_2$ is not transformed under \eqref{bu2} for $n=1$ and in this situation it is in fact more useful to define $y_2$ by 
\begin{align}
 y_2 = \e^{-1/z_2}.\eqlab{y2z2}
\end{align}

\begin{remark}
Notice that \eqref{y2z2} corresponds to $y_2 = y^{\epsilon}$ (by \eqref{z2here} and \eqref{yz}). Moreover, since $z_2(x)\log x^{-1}\to 1$ as $x\to 0^+$ in \eqref{z2xb} for $n=1$ and $\epsilon=0$, we have $y_2(x) x^{-1}\to 1$ as $x\to 0^+$ for $\epsilon=0$ in terms of $y_2$. 

One of the reasons why we use $y_2$ instead of $z_2$ is that in the chart $\bar x=1$, we then deal with passage near a hyperbolic saddle at $(r_1,y_2,\rho_{21})=(0,0,0)$ (see system \eqref{xz21n1}) and use normal forms from \cite{NForms,ZR02}. Working with the original variable $z_2$, one would have a semi-hyperbolic singularity at $(r_1,z_2,\rho_{21})=(0,0,0)$ with eigenvalues of the linearization $-1,0,1$. We have not found suitable normal forms to deal with this semi-hyperbolic case.
\end{remark}

This gives 
\begin{equation}\eqlab{system-after-exp}
\begin{aligned}
 \dot x &=  \rho_2^{2} P_\lambda(\rho_2^{-1} x) +x^{2} ( \zeta_2(x,\rho_2)+1) + \rho_2^{-1} y_2^{1/\rho_2} g(x,y_2^{1/\rho_2},\rho_2),\\
   \dot y_2 &=-x y_2,\\
   \dot \rho_2 &= 0,
\end{aligned}
\end{equation}
In the chart $\bar \epsilon=1$, we consider $z_2\in [0,\mu]$, with $\mu>0$, and therefore work with $y_2\in [0,\e^{-1/\mu}]\subset [0,1)$. 
\subsection{The chart $\bar x=1$}
In this chart, \RH{we use $x=r_1,\rho_2=r_1\rho_{21}$ and } obtain the desingularized vector-field defined by
\begin{equation}\eqlab{xz21n1}
\begin{aligned}
 \dot r_1 &=  -r_1 F_{21}(r_1,y_{2},\rho_{21}),\\
   \dot y_{2} &=-y_2,\\
   \dot \rho_{21} &= \rho_{21} F_{21}(r_1,z_{21},\rho_{21}),
\end{aligned}
\end{equation}
where
\begin{align*}
 F_{21}(r_1,y_2,\rho_{21}) = -Q_\lambda(\rho_{21}) -  \zeta_2(r_1,r_1\rho_{21})-1  +G_{21}( r_1,r_1^{-\KUK{3}}\rho_{21}^{-1} y_2^{1/(r_1\rho_{21})},\rho_{21}),
\end{align*}
with
\begin{align}\eqlab{Q}
 Q_\lambda(\rho_{21}) := \rho_{21}^2 P_\lambda(\rho_{21}^{-1}),
\end{align}
and
\begin{align}\eqlab{G21}
 G_{21} (r_1,q,\rho_{21}) := \KUK{-}q g(r_1,r_1^{\KUK{3}} \rho_{21} q,r_1\rho_{21}).\nonumber
\end{align}
\RH{Notice that $F_{21}(0,y_2,0)=1$ and  $(r_1,y_2,\rho_{21})=(0,0,0)$ is a hyperbolic saddle for \eqref{xz21n1}.}

In the following we fix $N\in \mathbb N$ large enough. Then by working in a sufficiently small neighborhood of $(r_1,y_2,\rho_{21})=(0,0,0)$, we may assume that $F_{21}\in C^N$. We divide the right hand side of \eqref{xz21n1} by $F_{21}$ to obtain the equivalent system
\begin{equation}\eqlab{xz210}
\begin{aligned}
 \dot r_1 &=  -r_1 ,\\
   \dot y_{2} &=-\frac{y_2}{F_{21}(r_1,y_{2},\rho_{21})},\\
   \dot \rho_{21} &= \rho_{21},
\end{aligned}
\end{equation}
in a small neighborhood of the origin, which we now proceed to normalize. 
 \NEW{Since we are interested in an explicit entry-exit relation, we will need detailed information about the normal form transformations. Our strategy is therefore based upon first considering partial linearizations within} the two invariant planes $r_1=0$ and $\rho_{21}=0$. Within the former, we find
\begin{align*}
 \dot y_2 &= \frac{y_2}{Q_\lambda(\rho_{21})},\\
 \dot \rho_{21}&=\rho_{21}. 
\end{align*}
We see that $(y_2,\rho_{21})=(0,0)$ is a (resonant) hyperbolic saddle, with eigenvalues $-1$ and $1$. There are no resonant terms and the system can be linearized explicitly by \RH{$(Y,\rho_{21})\mapsto (y_2,\rho_{21})$} defined by
\begin{align*}
 y_2 &= \e^{\int_0^{\rho_{21}} \frac{Q_\lambda(s)+1}{s Q_\lambda(s)}ds} Y,
\end{align*}
so \NEW{that $\dot Y=-Y$.}
This follows from a simple calculation. Notice that the integral is well-defined since $Q_\lambda(0)=-1$. 

Now, within $\rho_{21}=0$ we find 
\begin{align*}
 \dot r_1 &=-r_1,\\
 \dot y_2 &= \frac{y_2}{\zeta_2(r_1,0)}.
\end{align*}
We see that $(r_1,y_2)=(0,0)$ is a (resonant) hyperbolic node, with eigenvalues $-1$ and $-1$. However, there are no resonant terms and the system can be linearized explicitly by \RH{$(r_1,Y)\mapsto (r_1,y_2)$} defined by 
\begin{align*}
 y_2 &= \e^{-\int_0^{r_1} \frac{\zeta_2(s,0)+1}{s\zeta_2(s,0)}ds} Y,
\end{align*}
so \NEW{that $\dot Y=-Y$.}
This follows from a simple calculation. Notice that the integral is again well-defined since $\zeta_2(0,0)=-1$.

\RH{Before we combine these transformations, notice that 
\begin{align*}
 -\frac{1}{F_{21}}= \frac{1}{Q_\lambda(\rho_{21})} + \frac{1}{\zeta_2(r_1,0)}+1+
 r_1 \rho_{21} R_0(r_1,\rho_{21})+R_1(r_1,y_2,\rho_{21}),
\end{align*}
where $R_0$ is $C^\infty$-smooth and  $R_1$ is $C^N$-flat w.r.t. $r_1\rho_{21}$.
Then the $C^\infty$ diffeomorphism $(r_1,Y,\rho_{21})\mapsto (r_1,y_2,\rho_{21})$ defined by
\begin{align}\eqlab{theta}
 y_2 &= \Theta (r_1,Y,\rho_{21}):=\e^{\int_0^{\rho_{21}} \frac{Q_\lambda(s)+1}{sQ_\lambda(s)}ds-\int_0^{r_1} \frac{\zeta_2(s,0)+1}{s\zeta_2(s,0)}ds} Y,
\end{align}
 brings \eqref{xz210} into the following system}
\begin{equation}\eqlab{Yeqn}
\begin{aligned}
 \dot r_1 &=-r_1,\\
 \dot Y &= Y\left(-1 + r_1 \rho_{21} R_0(r_1,\rho_{21})+R_1(r_1,\Theta (r_1,Y,\rho_{21}),\rho_{21})\right),\\
 \dot\rho_{21}&=\rho_{21}.
\end{aligned}
\end{equation}
We can then apply normal form theory (see e.g. \cite[Proposition 4.6]{ZR02}) to simplify \eqref{Yeqn} further. 
\begin{lemma}
 Fix any $k\in \mathbb N$. \RH{Then there exists a $C^k$-diffeomorphism $(r_1,\widehat Y,\rho_{21})\mapsto (r_1,Y,\rho_{21})$ defined by
 \begin{align}\eqlab{y2Y}
  Y=\widehat Y(1+\mathcal O(r_1\rho_{21}))
 \end{align}
that brings \eqref{Yeqn}, locally in a sufficiently small neighborhood of $(r_1,Y,\rho_{21})=(0,0,0)$, into the normal form }
\begin{equation}\eqlab{normalformhatY}
\begin{aligned}
 \dot r_1&=-r_1,\\
 \dot{\widehat Y} &=- \widehat Y ( 1+\alpha_0(r_1\rho_{21})),\\
 \dot \rho_{21}&=\rho_{21}.
\end{aligned}
\end{equation}
Here $\alpha_0$ is $C^k$-smooth and satisfies $\alpha_0(0)=0$.
\end{lemma}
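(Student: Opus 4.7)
The plan is to invoke a finite-smoothness normal form theorem at a hyperbolic resonant singularity, in the form of \cite[Proposition 4.6]{ZR02} (see also \cite{NForms}). The linearization of \eqref{Yeqn} at the origin is diagonal with eigenvalues $(\lambda_{r_1},\lambda_Y,\lambda_{\rho_{21}})=(-1,-1,1)$, and these satisfy the resonance relation $\lambda_{r_1}+\lambda_{\rho_{21}}=0$; combined with $\lambda_{Y}=-1$, this generates the infinite family of resonant monomials $\widehat Y\,(r_1\rho_{21})^m$, $m\ge 1$, in the $\dot{\widehat Y}$-equation. Since the $r_1$- and $\rho_{21}$-equations in \eqref{Yeqn} are already linear, and $\{Y=0\}$ is invariant, I would look for a coordinate change of the form $Y=\widehat Y\,\Psi(r_1,\widehat Y,\rho_{21})$ with $\Psi(0,0,0)=1$, fixing $r_1$ and $\rho_{21}$. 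This automatically produces a transformation of the announced form $Y=\widehat Y(1+\mathcal O(r_1\rho_{21}))$.

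A standard resonance calculation shows that a monomial $r_1^{a_1}\widehat Y^{a_2}\rho_{21}^{a_3}\partial_{\widehat Y}$ in $\dot{\widehat Y}$ is resonant iff $a_1+a_2=a_3+1$; equivalently, the homological operator $L_0=X_0+\mathrm{id}$ (with $X_0$ the linearized $\widehat Y$-field) has eigenvalue $-a_1-a_2+a_3+1$ on it, vanishing precisely in that case. I would then perform a Poincar\'e--Dulac iteration up to order $k$: at each step solve $L_0 h=\Pi_{\textnormal{nr}}$ on the non-resonant subspace, killing all non-resonant monomials. The resonant monomials linear in $\widehat Y$ sum to give the coefficient $-\widehat Y(1+\alpha_0(r_1\rho_{21}))$ with $\alpha_0(0)=0$ (the latter because the linear part of $\dot{\widehat Y}/\widehat Y$ is already the correct $-1$). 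Any higher-$\widehat Y$ resonant contributions (of the form $\widehat Y^{k+1}\rho_{21}^{k}\cdot(\textnormal{function of }r_1\rho_{21})$) can only enter through the Taylor expansion of $R_1(r_1,\Theta,\rho_{21})$ in $\Theta$; since $R_1$ is $C^N$-flat in $r_1\rho_{21}$ with $N\gg k$, each such contribution is $C^k$-flat at the origin and is absorbable through the remaining $\widehat Y$-dependence in $\Psi$.

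The main obstacle is the inherent loss of smoothness when normalizing a resonant hyperbolic singularity: $C^\infty$ normal form is not available in general, and one must fix the target regularity $k$ in advance and choose $N$ correspondingly large. This is exactly the scenario handled by \cite[Proposition 4.6]{ZR02}, and we set up the reduction so as to enter its hypotheses directly. The remaining verifications, namely convergence of the iteration in the $C^k$-topology and the identification of $\alpha_0$ as a $C^k$-smooth function of $r_1\rho_{21}$ with $\alpha_0(0)=0$, are then standard.
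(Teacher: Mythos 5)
Your proposal is correct and follows essentially the same route as the paper: the paper's proof simply invokes \cite[Proposition 4.6]{ZR02} (see also \cite{NForms}) and notes that the higher resonant coefficients $\alpha_i$, $i\ge 1$, vanish because the corresponding resonant terms are absent in \eqref{Yeqn} (the only $Y$-nonlinearity enters through the flat term $R_1$), which is precisely the observation you make. Your added Poincar\'e--Dulac bookkeeping of the resonance relation $a_1+a_2=a_3+1$ is consistent with, and just a more explicit version of, what the cited normal form result packages.
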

\begin{proof}
  The result follows from \cite[Proposition 4.6]{ZR02} (see also \cite{NForms}). \KUK{Notice in particular in comparison with \cite[Proposition 4.6]{ZR02} that $\alpha_{i}=0$ for all $i\in \{1,\ldots,N(k)\}$ (using the notation of \cite{ZR02}) since these resonant terms are absent in \eqref{Yeqn}}.
\end{proof}
We now define the following sections: 
\begin{align*}
\Sigma_{\text{in},21}\,&:\quad y_{2}\in [0,\tilde\mu],\,r_1=\delta,\,\rho_{21} \in [0,\nu), \\
\Sigma_{\text{out},21}\,&:\quad y_2\in [0,\e^{-1/\mu}], \,r_1\in [0,\nu),\,\rho_{21}=\delta,
\end{align*}
\RH{where $\tilde\mu,\nu>0$ are small enough and $\delta>0$ is introduced in Section \ref{subsection-statement} (we can shrink $\delta$ if necessary). We have} \KUK{the following}.

\begin{lemma}\label{lemma-normal-form}
Fix any $k\in \mathbb N$ and $\delta>0$ small enough.
Then the transition map 
\begin{align*}
\Sigma_{\textnormal{in},21}&\rightarrow \Sigma_{\textnormal{out},21},\\
(\delta,y_{2},\rho_{21})&\mapsto (\rho_{21},y_{2+}(\KUK{y_2},\rho_{21}), \delta),
\end{align*}
given by the forward flow of \eqref{xz210}, is well-defined for all $\rho_{21}\in [0,\rho_{210})$ with $\rho_{210}>0$ small enough.
In particular, $y_{2+}$ takes the following from:
\begin{align*}
 y_{2+}(y_{2},\rho_{21}) = \frac{\rho_{21}}{\delta} \e^{\int_0^{\delta} \frac{Q_\lambda(s)+1}{sQ_\lambda(s)}ds+\int_0^{\delta} \frac{\zeta_2(s,0)+1}{s\zeta_2(s,0)}ds}   y_2(1+\KUK{o(1)})
\end{align*}
where $\KUK{o(1)}$ is $C^k$-smooth w.r.t. $(y_2,\rho_{21},\rho_{21}\log \rho_{21}^{-1})$ and zero for all $y_2\in [0,\tilde\mu]$ when $\rho_{21}=0$.
\end{lemma}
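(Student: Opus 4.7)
The plan is to exploit the normal form \eqref{normalformhatY} obtained in the previous lemma, where the transition is trivial to integrate, and then to unwind the coordinate changes \eqref{y2Y} and \eqref{theta} in order to read off the explicit exponential prefactor.

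First I would work in the normal form \eqref{normalformhatY}. The product $r_1\rho_{21}$ is a first integral of the flow, since $\tfrac{d}{dt}(r_1\rho_{21})=0$. Therefore, along an orbit starting at $(r_1,\widehat Y,\rho_{21})=(\delta,\widehat Y_{\text{in}},\rho_{21,\text{in}})$, the quantity $\alpha_0(r_1\rho_{21})$ is constant and equal to $\alpha_0(\delta\rho_{21,\text{in}})$. The flow is then given explicitly by $r_1(t)=\delta \e^{-t}$, $\rho_{21}(t)=\rho_{21,\text{in}}\e^{t}$, and $\widehat Y(t)=\widehat Y_{\text{in}}\e^{-t(1+\alpha_0(\delta\rho_{21,\text{in}}))}$. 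Imposing the exit condition $\rho_{21}(T)=\delta$ gives $T=\log(\delta/\rho_{21,\text{in}})$, so that $r_1(T)=\rho_{21,\text{in}}$ and
\begin{align*}
\widehat Y_{\text{out}}= \left(\frac{\rho_{21,\text{in}}}{\delta}\right)^{1+\alpha_0(\delta\rho_{21,\text{in}})} \widehat Y_{\text{in}} = \frac{\rho_{21,\text{in}}}{\delta}\,\widehat Y_{\text{in}}\bigl(1+\mathcal O(\rho_{21,\text{in}}\log\rho_{21,\text{in}}^{-1})\bigr),
\end{align*}
where the last equality uses $\alpha_0(0)=0$ and the expansion $\left(\frac{\rho_{21}}{\delta}\right)^{\alpha_0(\delta\rho_{21})}=\exp\!\bigl(\alpha_0(\delta\rho_{21})\log(\rho_{21}/\delta)\bigr)=1+\mathcal O(\rho_{21}\log\rho_{21}^{-1})$.

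Next I would translate back to the original coordinates. The change of variables \eqref{y2Y} gives $Y=\widehat Y(1+\mathcal O(r_1\rho_{21}))$; applied at the entry section ($r_1=\delta$) and at the exit section ($\rho_{21}=\delta$, $r_1=\rho_{21,\text{in}}$), both correction factors are of the form $1+\mathcal O(\delta\rho_{21,\text{in}})$ and can be absorbed into a $C^k$-smooth factor $1+o(1)$. Finally, applying \eqref{theta} at the entry and exit yields
\begin{align*}
Y_{\text{in}}&=y_{2,\text{in}}\,\e^{-\int_0^{\rho_{21,\text{in}}} \frac{Q_\lambda(s)+1}{sQ_\lambda(s)}ds+\int_0^{\delta} \frac{\zeta_2(s,0)+1}{s\zeta_2(s,0)}ds},\\
y_{2,\text{out}}&=Y_{\text{out}}\,\e^{\int_0^{\delta} \frac{Q_\lambda(s)+1}{sQ_\lambda(s)}ds-\int_0^{\rho_{21,\text{in}}} \frac{\zeta_2(s,0)+1}{s\zeta_2(s,0)}ds}.
\end{align*}
Multiplying the two exponentials and noting that the integrals from $0$ to $\rho_{21,\text{in}}$ and $0$ to $r_1(T)=\rho_{21,\text{in}}$ are both $\mathcal O(\rho_{21,\text{in}})$ and hence can be pushed into the $1+o(1)$ correction, the combined exponential prefactor collapses to the stated
\begin{align*}
\e^{\int_0^{\delta} \frac{Q_\lambda(s)+1}{sQ_\lambda(s)}ds+\int_0^{\delta} \frac{\zeta_2(s,0)+1}{s\zeta_2(s,0)}ds},
\end{align*}
multiplied by the factor $\rho_{21,\text{in}}/\delta$ obtained from the linear part of the normal form transition.

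The main obstacle, and the reason for first passing through the normal form \eqref{normalformhatY}, is the presence of the non-linear perturbation $\alpha_0(r_1\rho_{21})$ in $\widehat Y$: without the first-integral structure of $r_1\rho_{21}$ this would generate uncontrolled terms, whereas with it the perturbation reduces to a single power $(\rho_{21}/\delta)^{\alpha_0(\delta\rho_{21})}$ whose logarithmic expansion is precisely what forces the stated $(\rho_{21},\rho_{21}\log\rho_{21}^{-1})$-smoothness of the remainder. The smoothness of $o(1)$ with respect to $y_2$ and the vanishing at $\rho_{21}=0$ follow automatically from \eqref{y2Y}, the smoothness of $\alpha_0$ and the $C^k$-regularity inherited from the normal form lemma.
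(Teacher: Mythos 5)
Your proof is correct and follows exactly the route the paper takes: the paper's own proof is the single sentence ``integrate the normal form \eqref{normalformhatY} and use the changes of coordinates \eqref{theta} and \eqref{y2Y}'', and your argument is a faithful, correctly executed expansion of precisely that — using the first integral $r_1\rho_{21}$ to reduce the transition to $(\rho_{21}/\delta)^{1+\alpha_0(\delta\rho_{21})}$, absorbing the $\mathcal O(\rho_{21})$ tails of the integrals $\int_0^{\rho_{21}}$ into the $1+o(1)$ factor, and identifying the $\rho_{21}\log\rho_{21}^{-1}$ dependence of the remainder. No gaps.
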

\begin{proof}
 We simply integrate the normal form \eqref{normalformhatY} and use \RH{the changes of coordinates \eqref{theta} and} \eqref{y2Y}.
\end{proof}

\begin{remark}\remlab{1map}
 We are now in a position to describe the mapping $(x_{\text{in}},\delta)\mapsto (r_1,y_{2+}(x_{\text{in}},\epsilon),\delta)$ (with $r_1=\epsilon \delta^{-1}$) from $\Sigma_{\text{in}}$ to $\Sigma_{\text{out},21}$, \RH{with $\Sigma_{\text{in}}$ defined in Section \ref{subsection-statement}.} Indeed, upon using \RH{Lemma \ref{lemma-boundary}, \eqref{z2xb}, the change of coordinates $y_2 = \e^{-1/z_2}$ and Lemma \ref{lemma-normal-form} } we find that 
 \begin{align*}
  y_{2+} = \frac{\rho_{21}}{\delta} \e^{\int_0^{\delta} \frac{Q_\lambda(s) +1}{sQ_\lambda(s)}ds+\int_0^{\delta} \frac{\zeta_2(s,0)+1}{s\zeta_2(s,0)}ds +\int_{\delta}^{x_{\text{in}}^b} \frac{1}{s\zeta_2(s,0)}ds } (1+o(1)),
 \end{align*}
\RH{ where $o(1)$ is $C^k$-smooth w.r.t. $(x_{\text{in}},\epsilon, \epsilon \log \epsilon^{-1})$ and is identically zero when $\epsilon=0$.}
 We can simplify this further by writing 
 \begin{align*}
  \frac{1}{s\zeta_2(s,0)}=\frac{\zeta_2(s,0)+1}{s\zeta_2(s,0)}-\frac{1}{s},
 \end{align*}
 and by noticing that $\rho_{21}$ here is the value of $\rho_{21}$ at the section $\Sigma_{\text{in},21}$. Therefore $\rho_{21} = \epsilon \delta^{-1}$ since $r_1=\delta$ there.
This gives
 \begin{align*}
  y_{2+} &= \frac{\epsilon}{\delta^2}\e^{\int_0^{\delta} \frac{Q_\lambda(s) +1}{sQ_\lambda(s)}ds} \e^{\int_0^{x_{\text{in}}^b} \frac{\zeta_2(s,0)+1}{s\zeta_2(s,0)}ds -\int_{\delta}^{x_{\text{in}}^b} s^{-1} ds } (1+o(1))\\
  &=\frac{\epsilon}{\delta^2}\e^{\int_0^{\delta} \frac{Q_\lambda(s) +1}{sQ_\lambda(s)}ds}\e^{\int_0^{x_{\text{in}}^b} \frac{\zeta_2(s,0)+1}{s\zeta_2(s,0)}ds +\log \frac{\delta}{x_{\text{in}}^b}} (1+o(1))\\
  &=\frac{\epsilon}{\delta}\e^{\int_0^{\delta} \frac{Q_\lambda(s) +1}{sQ_\lambda(s)}ds}\e^{\int_0^{x_{\text{in}}^b} \frac{\zeta_2(s,0)+1}{s\zeta_2(s,0)}ds -\log x_{\text{in}}^b} (1+o(1)),
 \end{align*}
 \RH{ where $o(1)$ has the same property as above.}
\end{remark}

\subsection{Chart $\bar \rho_2=1$} 
Consider again the system \eqref{system-after-exp}. 
In this chart, we \RH{ use $x=r_2x_2,\rho_2=r_2$ (see Section \ref{subsection-chart-bar-epsilon}) and } obtain the desingularized vector-field defined by
\begin{equation}\eqlab{x2y2}
\begin{aligned}
 \dot x_2 &=  P_\lambda(x_2) +x_2^{2} ( \zeta_2(r_2x_2,r_2)+1) + \RH{r_2^{-3} } y_2^{1/r_2} g(r_2x_2,y_2^{1/r_2},r_2),\\
   \dot y_2 &=-x_2 y_2,
   \end{aligned}
\end{equation}
and $\dot r_2=0$. We consider $x_2\in [-\delta^{-1},\delta^{-1}]$, $y_2\in [0,\e^{-1/\mu}]\subset [0,1)$ and $0\le r_2\ll 1$. On this compact set, \eqref{x2y2} is regular \RH{(we use \eqref{P2ncond} and $\zeta_2(0,0)=-1$) } with $y_2=0$ being an invariant set. Within $r_2=0$, we therefore find
\begin{align*}
 \frac{dy_2}{dx_2} = -\frac{x_2y_2}{P_\lambda(x_2)},
\end{align*}
with the solution:
\begin{align}\eqlab{y2x2sol}
 y_2(x_2) = \e^{-\int_{x_{20}}^{x_2} \frac{s}{P_\lambda(s)}ds} y_2(x_{20}),
\end{align}
using an initial condition at $x_2=x_{20}$. 

We now define the following transverse sections: $\Sigma_{\text{in},22}\,:\,x_2=\delta^{-1},y_2\in [0,\e^{-1/\mu}]$ and $\Sigma_{\text{final},22}\,:\,x_2=0,y_2\in [0,\e^{-1/\mu}]$ for  all $0\le r_2\ll 1$. We then have the following 
\begin{lemma}\lemmalab{2map}
 The transition map $\Sigma_{\text{in},22}\rightarrow \Sigma_{\text{final},22}$, $(\delta^{-1},y_2)\mapsto (0,y_{2+}(y_2,r_2))$ \NEW{associated with \eqref{x2y2}} is well-defined for all $0\le r_2\ll 1$. In particular,
 \begin{align*}
  y_{2+}(y_2,r_2) = \e^{\int_{0}^{\delta^{-1}}\frac{s}{P_\lambda(s)}ds} y_2(1+\mathcal O(r_2)),
 \end{align*}
where $\mathcal O(r_2)$ is $C^\infty$-smooth w.r.t. \NEW{$(y_2,r_2)$} and \NEW{equals} zero for $r_2=0$. 
\end{lemma}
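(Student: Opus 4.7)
The plan is straightforward because the apparent singularity at $r_2=0$ caused by the $r_2^{-3}$ factor is only cosmetic. First I would observe that on the strip $y_2 \in [0, \e^{-1/\mu}]\subset [0,1)$ one has the uniform bound $y_2^{1/r_2} \le \e^{-1/(\mu r_2)}$, so the term $r_2^{-3} y_2^{1/r_2} g(r_2 x_2, y_2^{1/r_2}, r_2)$ is $C^\infty$-flat in $r_2$ at $r_2=0$, uniformly on the compact domain $(x_2,y_2)\in[-\delta^{-1},\delta^{-1}]\times[0,\e^{-1/\mu}]$. Hence the right-hand side of \eqref{x2y2} extends $C^\infty$-smoothly to $r_2=0$, where, using $\zeta_2(0,0)+1=0$, it reduces to
\begin{align*}
\dot x_2 = P_\lambda(x_2), \qquad \dot y_2 = -x_2 y_2.
\end{align*}

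By \eqref{P2ncond} and continuity, $\dot x_2 \le -c'<0$ uniformly on the above compact set for all $r_2$ sufficiently small. The sections $\{x_2=\delta^{-1}\}$ and $\{x_2=0\}$ are therefore transverse to the flow, the passage time is uniformly bounded, and $\dot y_2 = -x_2 y_2 \le 0$ for $x_2\ge 0$ keeps $y_2$ inside $[0,\e^{-1/\mu}]$ throughout. Standard smooth dependence of the flow on parameters then yields a $C^\infty$ transition map $y_{2+}(y_2,r_2)$; since $\{y_2=0\}$ is invariant, this map is linear in $y_2$, i.e.\ $y_{2+}(y_2,r_2) = K(r_2)\,y_2$ for some $C^\infty$ function $K$.

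It remains to identify $K(0)$. On the limit plane $r_2=0$, division by the nonzero first component gives $dy_2/dx_2 = -x_2 y_2/P_\lambda(x_2)$, and separation of variables from $x_2=\delta^{-1}$ down to $x_2=0$ yields
\begin{align*}
\ln \frac{y_{2+}}{y_2} \;=\; -\int_{\delta^{-1}}^{0}\frac{s}{P_\lambda(s)}\,ds \;=\; \int_0^{\delta^{-1}}\frac{s}{P_\lambda(s)}\,ds,
\end{align*}
which is a finite (negative) number since $P_\lambda<0$ on the interval. Hence $K(0) = \e^{\int_0^{\delta^{-1}} s/P_\lambda(s)\,ds}$, and $K(r_2)=K(0)(1+\mathcal{O}(r_2))$ by $C^\infty$-smoothness in $r_2$. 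There is no substantial obstacle here: the one potentially worrying feature, the $r_2^{-3}$ prefactor, is neutralized at the very first step by the exponential flatness of $y_2^{1/r_2}$.
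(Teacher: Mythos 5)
Your argument is correct and follows essentially the same route as the paper, whose proof simply invokes the explicit solution \eqref{y2x2sol} on $\{r_2=0\}$, regular perturbation theory, and the invariance of $\{y_2=0\}$ — your write-up is just a fleshed-out version of that, including the key observation that the $r_2^{-3}y_2^{1/r_2}$ term is flat at $r_2=0$ uniformly on the compact domain. One small overstatement: since the $g$-term couples $\dot x_2$ to $y_2$ for $r_2>0$, the transition map is not literally linear in $y_2$; invariance of $\{y_2=0\}$ together with smoothness only gives $y_{2+}=y_2\,H(y_2,r_2)$ with $H$ smooth and $H(y_2,0)=\e^{\int_0^{\delta^{-1}}s/P_\lambda(s)\,ds}$, but that is exactly what the lemma's $\mathcal O(r_2)$ (which is allowed to depend on $y_2$) requires, so the conclusion stands.
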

\begin{proof}
 We use \eqref{y2x2sol}, regular perturbation theory and the invariance of $y_2=0$.
\end{proof}

\RH{Notice that $x_2=\delta^{-1}$ corresponds to $\rho_{21}=\delta$ (see \eqref{cc12}). Finally, we get} \KUK{the following}.
\begin{proposition}
 The transition map $(x_{\text{in}},\delta)\mapsto (0,y_{2+}(x_{\text{in}},\epsilon))$ from the original section $\Sigma_{\text{in}}\,:x\in I_{\text{in}},\,y=\delta$ to the section $\Sigma_{\text{final},22}$ is well-defined for all $0< \epsilon\ll 1$. In particular,
 \begin{equation}\eqlab{y2p}
 \begin{aligned}
  y_{2+} = \epsilon \exp\bigg(&{\int_0^1 \frac{s}{P_\lambda(s)}ds+\int_{1}^{\infty}\frac{P_\lambda(s)+s^2}{sP_\lambda(s)}ds}+\int_0^{x_{\text{in}}^b} \frac{\zeta_2(s,0)+1}{s\zeta_2(s,0)}ds \\
  &-\log x_{\text{in}}^b+\phi_{\text{in}}(x_{\text{in}},\epsilon,\epsilon \log \epsilon^{-1}) \bigg),
 \end{aligned}
 \end{equation}
with $\phi_{\text{in}}:I_{\text{in}}\times [0,\epsilon_0)\times [0,\epsilon_0 \log \epsilon_{0}^{-1})\rightarrow \mathbb R$ being $C^k$-smooth and satisfying \RH{$\phi_{\text{in}}(x_{\text{in}},0,0)=0$.} 
\end{proposition}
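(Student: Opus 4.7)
The plan is to compose the three ingredients we already have: the transition map from $\Sigma_{\text{in}}$ to $\Sigma_{\text{out},21}$ derived in \remref{1map}, the change of coordinates \eqref{cc12} between the charts $\bar x=1$ and $\bar\rho_2=1$, and the transition map $\Sigma_{\text{in},22}\to\Sigma_{\text{final},22}$ from \lemmaref{2map}. Concretely, for $n=1$ the change \eqref{cc12} reduces to $r_2=r_1\rho_{21}$ and $x_2=\rho_{21}^{-1}$, so the section $\Sigma_{\text{out},21}=\{\rho_{21}=\delta\}$ corresponds precisely to $\Sigma_{\text{in},22}=\{x_2=\delta^{-1}\}$, while the $y_2$-coordinate is unchanged between the two charts. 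Plugging the output $y_{2+}$ from \remref{1map} as input into \lemmaref{2map} yields
\begin{equation*}
 y_{2+} = \tfrac{\epsilon}{\delta}\exp\!\Big(\!\int_0^{\delta^{-1}}\!\tfrac{s}{P_\lambda(s)}ds+\int_0^{\delta}\!\tfrac{Q_\lambda(s)+1}{sQ_\lambda(s)}ds+\int_0^{x_{\text{in}}^b}\!\tfrac{\zeta_2(s,0)+1}{s\zeta_2(s,0)}ds-\log x_{\text{in}}^b\Big)(1+o(1)),
\end{equation*}
where $o(1)$ is $C^k$-smooth in $(x_{\text{in}},\epsilon,\epsilon\log\epsilon^{-1})$ and vanishes at $\epsilon=0$, by the combined smoothness statements of \remref{1map} and \lemmaref{2map}.

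The main technical step is to rewrite the $Q_\lambda$-integral so that it takes the same form as the integrals appearing in \eqref{y2p}. Using \eqref{Q}, $Q_\lambda(s)=s^2 P_\lambda(s^{-1})=\lambda_0 s^2+\lambda_1 s-1$, so $Q_\lambda(s)+1=s(\lambda_0 s+\lambda_1)$ and the integrand is smooth at $s=0$ with $Q_\lambda(0)=-1$. The substitution $v=s^{-1}$ then gives, after a short calculation,
\begin{equation*}
 \int_0^{\delta}\frac{Q_\lambda(s)+1}{sQ_\lambda(s)}ds=\int_{\delta^{-1}}^{\infty}\frac{P_\lambda(v)+v^2}{vP_\lambda(v)}dv,
\end{equation*}
the integrability at $\infty$ being ensured by the cancellation of the leading $v^{-1}$ terms (since $P_\lambda(v)\sim -v^2$).

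Next I would adjust the integration endpoint from $\delta^{-1}$ to $1$ to match \eqref{y2p}. Splitting $\int_0^{\delta^{-1}}=\int_0^1+\int_1^{\delta^{-1}}$ and $\int_{\delta^{-1}}^{\infty}=\int_1^{\infty}-\int_1^{\delta^{-1}}$ and using the elementary identity
\begin{equation*}
\frac{s}{P_\lambda(s)}-\frac{P_\lambda(s)+s^2}{sP_\lambda(s)}=-\frac{1}{s},
\end{equation*}
one obtains
\begin{equation*}
 \int_0^{\delta^{-1}}\tfrac{s}{P_\lambda(s)}ds+\int_{\delta^{-1}}^{\infty}\tfrac{P_\lambda(v)+v^2}{vP_\lambda(v)}dv=\int_0^{1}\tfrac{s}{P_\lambda(s)}ds+\int_{1}^{\infty}\tfrac{P_\lambda(v)+v^2}{vP_\lambda(v)}dv+\log\delta.
\end{equation*}
The $\log\delta$ exactly cancels the prefactor $\frac{1}{\delta}$, leaving $y_{2+}=\epsilon\exp(\cdots)(1+o(1))$ with the integrals precisely as in \eqref{y2p}.

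Finally, I would absorb the factor $(1+o(1))$ into the exponential as $\exp(\phi_{\text{in}}(x_{\text{in}},\epsilon,\epsilon\log\epsilon^{-1}))$ with $\phi_{\text{in}}:=\log(1+o(1))$; since $o(1)$ is $C^k$ and vanishes at $\epsilon=0$, so does $\phi_{\text{in}}$, giving the stated regularity. I do not expect any genuine obstacle: the only nontrivial point is the substitution that turns the $Q_\lambda$-integral (which came from the partial linearization on $r_1=0$ in the chart $\bar x=1$) into a tail integral for $P_\lambda$, so that the two natural pieces of the passage through $x=0$ can be combined into the symmetric expression appearing in \eqref{y2p}.
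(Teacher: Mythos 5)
Your proposal is correct and follows essentially the same route as the paper's proof: composing Remark \ref{remark:1map} with Lemma \ref{lemma:2map} via the chart change \eqref{cc12}, converting the $Q_\lambda$-integral into a tail integral for $P_\lambda$ by the substitution $s=v^{-1}$, and using the identity $\frac{s}{P_\lambda(s)}-\frac{P_\lambda(s)+s^2}{sP_\lambda(s)}=-\frac{1}{s}$ to shift the splitting point to $1$ so that the resulting $\log\delta$ cancels the prefactor $\delta^{-1}$. The only difference is that you spell out the endpoint adjustment more explicitly than the paper does; the substance is identical.
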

\begin{proof}
 By combining \lemmaref{2map} and \remref{1map} we find that 
 \begin{align*}
  y_{2+} =\frac{\epsilon}{\delta}  \e^{\int_{0}^{\delta^{-1}}\frac{s}{P_\lambda(s)}ds} \e^{\int_0^{\delta} \frac{Q_\lambda(s) +1}{sQ_\lambda(s)}ds}\e^{\int_0^{x_{\text{in}}^b} \frac{\zeta_2(s,0)+1}{s\zeta_2(s,0)}ds-\log x_{\text{in}}^b}  (1+\KUK{o(1)}),
 \end{align*}
 \RH{where $o(1)$ is $C^k$-smooth w.r.t. $(x_{\text{in}},\epsilon, \epsilon \log \epsilon^{-1})$ and equal to zero when $\epsilon=0$.}
Recall that $r_2=\rho_2=\epsilon$. We now use \eqref{Q} and rewrite
\begin{align*}
\int_0^{\delta} \frac{Q_\lambda(s) +1}{sQ_\lambda(s)}ds = \int_{\delta^{-1}}^{\infty} \frac{v^{-1} P_\lambda(v)+v}{P_\lambda(v)}dv,
\end{align*}
upon using the substitution $s=v^{-1}$. 
In combination with 
\begin{align*}
 \frac{s}{P_\lambda(s) } = \frac{s^{-1} P_\lambda(s)+s}{P_\lambda(s)}-s^{-1},
\end{align*}this leads to 
\begin{align*}
  y_{2+} &= \frac{\epsilon}{\delta} \e^{\int_{0}^{\delta^{-1}}\frac{s}{P_\lambda(s)}ds+\int_{\delta^{-1}}^{\infty}\frac{s^{-1} P_\lambda(s)+s}{P_\lambda(s)} ds}\e^{\int_0^{x_{\text{in}}^b} \frac{\zeta_2(s,0)+1}{s\zeta_2(s,0)}ds-\log x_{\text{in}}^b}  (1+\KUK{o(1)}) \\
  &=\epsilon \e^{\int_0^1 \frac{s}{P_\lambda(s)}ds+\int_{1}^{\infty}\frac{P_\lambda(s)+s^2}{sP_\lambda(s)}ds}\e^{\int_0^{x_{\text{in}}^b} \frac{\zeta_2(s,0)+1}{s\zeta_2(s,0)}ds-\log x_{\text{in}}^b}  (1+\KUK{o(1)}).
 \end{align*}
 Each of the integrals are convergent since $P_\lambda(s)+s^2 = \lambda_0 + s\lambda_1$, cf. \eqref{P2n}. By writing $1+\KUK{o(1)}= \e^{\phi_{\text{in}}(x_{\text{in}},\epsilon,\epsilon \log \epsilon^{-1})}$ the result follows. 
\end{proof}

We now consider the map $(x_{\text{out}},\delta)\mapsto (0,y_{2-})$ from $\Sigma_{\text{out}}$ to $\Sigma_{\text{final},22}$ defined by the backward flow. For this we replace $x$ by $-x$ and $t$ by $-t$. This gives \eqref{xy} with $P_\lambda(x)$ and \RH{$\zeta_{2}(x,\epsilon)$} in \eqref{flambda} replaced by $P_\lambda(-x)$ and \RH{$\zeta_{2}(-x,\epsilon)$}, respectively. We then obtain the following expression for $y_{2-}$ by using \eqref{y2p} :
\begin{align*}
y_{2-} &= \epsilon \e^{\int_0^1 \frac{s}{P_\lambda(-s)}ds+\int_{1}^{\infty}\frac{P_\lambda(-s)+s^2}{sP_\lambda(-s)}ds}\e^{\int_0^{-x_{\text{out}}^b} \frac{\zeta_2(-s,0)+1}{s\zeta_2(-s,0)}ds-\log (-x_{\text{out}}^b)} \e^{\phi_{\text{out}}(x_{\text{out}},\epsilon,\epsilon \log \epsilon^{-1})}\\
&=\epsilon \e^{\int_0^{-1} \frac{s}{P_\lambda(s)}ds+\int_{-1}^{-\infty}\frac{P_\lambda(s)+s^2}{sP_\lambda(s)}ds}\e^{\int_0^{x_{\text{out}}^b} \frac{\zeta_2(s,0)+1}{s\zeta_2(s,0)}ds-\log (-x_{\text{out}}^b)}  \e^{\phi_{\text{out}}(x_{\text{out}},\epsilon,\epsilon \log \epsilon^{-1})},
\end{align*}
for some new $\phi_{\text{out}}$ with \RH{$\phi_{\text{out}}(x_{\text{out}},0,0)=0$}, upon using the substitution $s=-v$ in the second equality. Here $x_{\text{out}}^b=x_{\text{out}}^b(x_{\text{out}})<0$ and \RH{$x_{\text{out}}\in I_{\text{out}}$}. 

To solve the entry-exit problem we consider $y_{2+}=y_{2-}$ as an equation for $(x_{\text{in}},x_{\text{out}},\epsilon)$. This gives the following equation
\begin{align*}
 &{\int_0^1 \frac{s}{P_\lambda(s)}ds+\int_{1}^{\infty}\frac{P_\lambda(s)+s^2}{sP_\lambda(s)}ds}\\
 &\qquad \qquad \qquad +\int_0^{x_{\text{in}}^b} \frac{\zeta_2(s,0)+1}{s\zeta_2(s,0)}ds-\log x_{\text{in}}^b +\phi_{\text{in}}(x_{\text{in}},\epsilon,\epsilon \log \epsilon^{-1})  \\
 &={\int_0^{-1} \frac{s}{P_\lambda(s)}ds+\int_{-1}^{-\infty}\frac{P_\lambda(s)+s^2}{sP_\lambda(s)}ds}\\
 & \qquad \qquad \qquad +\int_0^{x_{\text{out}}^b} \frac{\zeta_2(s,0)+1}{s\zeta_2(s,0)}ds -\log (-x_{\text{out}}^b)+\phi_{\text{out}}(x_{\text{out}},\epsilon,\epsilon \log \epsilon^{-1}),
\end{align*}
or simply
\begin{equation}\eqlab{eqn0}
\begin{aligned}
 \int_{-1}^1 \frac{s}{P_\lambda(s)}ds & +\KUK{\left(\int_{-\infty}^{-1} +\int_{1}^{\infty}\right)\frac{P_\lambda(s)+s^2}{sP_\lambda(s)}ds} \\
 &+\int_{x_{\text{out}}^b}^{x_{\text{in}}^b} \frac{\zeta_2(s,0)+1}{s\zeta_2(s,0)}ds +\log\left(-\frac{x_{\text{out}}^b}{x_{\text{in}}^b}\right)= \phi(x_{\text{in}},x_{\text{out}},\epsilon,\epsilon\log\epsilon^{-1}),
\end{aligned}
\end{equation}
setting $\phi(x_{\text{in}},x_{\text{out}},\epsilon,\epsilon \log \epsilon^{-1}):=\phi_{\text{out}}(x_{\text{out}},\epsilon,\epsilon \log \epsilon^{-1})-\phi_{\text{in}}(x_{\text{in}},\epsilon,\epsilon \log \epsilon^{-1})$. 

Using Remark \ref{remark-Cauchy}, it is clear that when $\epsilon\rightarrow 0$
\eqref{eqn0} reduces to the entry-exit formula \eqref{entry-exit-n=1} (which we write again)
$$
\text{p.v.} \int_{x_{\text{out}}^b}^{x_{\text{in}}^b}\frac{1}{s\zeta_2(s,0)}ds + \text{p.v.} \int_{-\infty}^{+\infty}\frac{s}{P_\lambda(s)}ds=0.
$$
Assume that for every $x_{\text{in}}\in I_{\text{in}}$ there exists $x_{\text{out}}\in I_{\text{out}}$ such that \eqref{entry-exit-n=1} is satisfied. From $x_{\text{out}}^b(x_{\text{out}})<0$, $(x_{\text{out}}^b)'(x_{\text{out}})>0$ and the assumption \eqref{f2n} for $n=1$ it follows that the partial derivative of the left-hand side in \eqref{entry-exit-n=1} w.r.t. $x_{\text{out}}$ is negative. Now, the implicit function theorem and \eqref{eqn0} imply Theorem \ref{Thm-n=1}.

\begin{remark} 
    \label{remark-DI} Let us explain the meaning of the Cauchy principal value defined by \eqref{CauchyPV-second-part}.

    When $r_2=0$, the system \eqref{x2y2} becomes 
    \begin{equation}\eqlab{x2y2r_2equal0}
\begin{aligned}
 \dot x_2 &=  P_\lambda(x_2),\\
   \dot y_2 &=-x_2 y_2.
   \end{aligned}
\end{equation}
Recall that $P_\lambda$ is negative (see \eqref{P2ncond}). The divergence integral of \eqref{x2y2r_2equal0} along the regular orbit $y_2=0$ between $x_2=\rho$ and $x_2=-\rho$, with $\rho >0$, is given by 

\begin{align}
    \eqlab{DIy_2=0}
    \int_{\rho}^{-\rho}\frac{P_\lambda'(x_2)-x_2}{P_\lambda(x_2)}dx_2=\log\left(\frac{P_\lambda(-\rho)}{P_\lambda(\rho)}\right)+\int_{-\rho}^{\rho}\frac{x_2}{P_\lambda(x_2)}dx_2.\nonumber
\end{align}
Notice that the divergence of the vector field \eqref{x2y2r_2equal0} is $P_\lambda'(x_2)-x_2$ and $dt=\frac{dx_2}{P_\lambda(x_2)}$.
Since $P_{\lambda}$ is a quadratic polynomial, the logarithmic term tends to $0$ as $\rho\to\infty$. We conclude that
$$\lim_{\rho\to\infty}\int_{\rho}^{-\rho}\frac{P_\lambda'(x_2)-x_2}{P_\lambda(x_2)}dx_2=\text{p.v.} \int_{-\infty}^{+\infty}\frac{x_2}{P_\lambda(x_2)}dx_2.$$
\end{remark}
%
%


%

%



\section{Proof of Theorem \ref{thm-n>1}}\label{proof-n>2}
In this section, we proof Theorem \ref{thm-n>1}. \RH{We consider the system \eqref{xz}, with $n\ge2$, and assume that \eqref{f2n} and \eqref{P2ncond} are satisfied.}

In the chart $\bar \epsilon=1$, we obtain \eqref{xz2} repeated here for convenience \RH{(see also \eqref{flambda}):}
\begin{equation}\eqlab{equation-repeated}
\begin{aligned}
 \dot x &=  \rho_2^{2n} P_\lambda(\rho_2^{-1} x) +x^{2n} ( \zeta_{2n}(x,\rho_2)+1) + \rho_2^{-1} \e^{-1/(\rho_2 z_2)} g(x,\e^{-1/(\rho_2z_2)},\rho_2),\\
   \dot z_2 &=-x z_2^2,\\
   \dot \rho_2 &= 0,
\end{aligned}
\end{equation}
 and apply the blow-up \eqref{bu-spherical}, working in the charts $\bar x=1$ and $\bar \rho_2=1$.  
\subsection{Chart $\bar x=1$}
In this chart, we \RH{use $x=r_1,z_2=r_1^{2(n-1)}z_{21},\rho_2=r_1\rho_{21}$ and} obtain the desingularized vector-field defined by
\begin{equation}\eqlab{xz21}
\begin{aligned}
 \dot r_1 &=  -r_1 F_{21}(r_1,z_{21},\rho_{21}),\\
   \dot z_{21} &=z_{21} F_{21}(r_1,z_{21},\rho_{21}) \left(2(n-1)-\frac{z_{21}}{F_{21}(r_1,z_{21},\rho_{21})}\right),\\
   \dot \rho_{21} &= \rho_{21} F_{21}(r_1,z_{21},\rho_{21}),  \nonumber
\end{aligned}
\end{equation}
after division of the right hand side by $r_1^{2n-1}$,
where 
\begin{align*}
 F_{21}(r_1,z_{21},\rho_{21}) = -Q_\lambda(\rho_{21}) -  \zeta_{2n}(r_1,r_1\rho_{21})-1  +r_1^{2n-3}\rho_{21} z_{21}^2  G_{21}(r_1,r_1^{2n-1} \rho_{21}z_{21},r_1\rho_{21}),
\end{align*}
with 
\begin{align*}Q_\lambda(\rho_{21}):&=\rho_{21}^{2n} P_\lambda(\rho_{21}^{-1}) = \rho_{21}^{2n} \lambda_0+\RH{\cdots +} \rho_{21} \lambda_{2n-1}-1
\end{align*}
and 
\begin{align*}
 G_{21}(x,z, \epsilon): = -z^{-2} \e^{-1/z} g(x,\e^{-1/z},\epsilon).
\end{align*}
In particular, $G_{21}$ is $C^\infty$-flat w.r.t. the second argument. 

Since \RH{$Q_\lambda(0)=-1$ and} $\zeta_{2n}(0,0)=-1$, it follows that $\RH{F_{21}}(0,z_{21},0) = 1$ for all $z_{21}$.  We therefore divide the right hand side by $F_{21}$ to obtain the equivalent system:
\begin{equation}\eqlab{xz21normal}
\begin{aligned}
 \dot r_1 &=  -r_1,\\
   \dot z_{21} &=z_{21}\left(2(n-1)-\frac{z_{21}}{F_{21}(r_1,z_{21},\rho_{21})}\right) ,\\
   \dot \rho_{21} &= \rho_{21},
\end{aligned}
\end{equation}
for $z_{21}\ge 0 $ in a large compact set and for all $(r_1,\rho_{21})$ sufficiently small. System \eqref{xz21normal} has a hyperbolic saddle at $(r_1,z_{21},\rho_{21})=(0,2(n-1),0)$, where the linearization has eigenvalues $-1,-2(n-1),1$\RH{, and  a hyperbolic saddle at $(r_1,z_{21},\rho_{21})=(0,0,0)$, where the eigenvalues of the linear part are given by $-1,2(n-1),1$. We refer to \figref{blowup2}.}

\RH{
\begin{remark} Using the change of coordinates $x=r_1,z_2=r_1^{2(n-1)}z_{21}$, it can be easily seen that $z_2(x)$ in \eqref{z2xb} tends to the hyperbolic saddle at $(r_1,z_{21},\rho_{21})=(0,2(n-1),0)$ as $x=r_1\to 0^+$, recall the discussion below \eqref{z2xb}.
    Notice also that $I_{\text{in}}\subset (0,\infty)$ is kept in a compact interval. This implies that the passage near the hyperbolic saddle at $(r_1,z_{21},\rho_{21})=(0,0,0)$ and the passage near the end point of the line of singularities $x=0$ of \eqref{equation-repeated} with $\rho_2=0$ on the blow up locus, visible in the chart $\bar z_2=1$ of \eqref{bu-spherical}, are not relevant for the proof of Theorem \ref{thm-n>1}. We refer to \figref{blowup2}.
\end{remark}
}

We now seek to normalize \eqref{xz21normal}. We focus on the passage near the hyperbolic saddle at $(r_1,z_{21},\rho_{21})=(0,2(n-1),0)$. \NEW{Our strategy will again be based upon partial linearizations within the two invariant planes $r_1=0$ and $\rho_{21}=0$.}

\NEW{First, we use that
\begin{align}\eqlab{decompo-n>1}
 -\frac{1}{F_{21}}&= \frac{1}{Q_\lambda(\rho_{21})}+ \frac{1}{\zeta_{2n}(r_1,0)} +1 \nonumber \\
 &- r_1 \rho_{21}  R_{0}(r_1,\rho_{21})-R_{1}(r_1,z_{21} ,r_1^{2n-1} \rho_{21}z_{21},\rho_{21}),
\end{align}
with $R_{0}$ and $R_{1}$ $C^\infty$-smooth. This follows from a simple calculation combined with a Taylor expansion (see also Section \ref{section-proof-n=1}).} 
In particular, $R_{1}$ is $C^\infty$-flat w.r.t. its third argument. 
\NEW{We then consider a change of coordinates $(r_1,Z,\rho_{21})\mapsto (r_1,z_{21},\rho_{21})$ defined by}
\begin{align*}
 z_{21} \RH{=\Theta (r_1,Z,\rho_{21}):}= \frac{1}{Z+L(r_1,\rho_{21})},\quad L(0,0)=\frac{1}{2(n-1)},
\end{align*}
with $L\in C^1$. 
Notice that $(r_1,z_{21},\rho_{21})=(0,2(n-1),0)$ corresponds to $(r_1,Z,\rho_{21})=(0,0,0)$. 
\RH{Then the $z_{21}$-component of \eqref{xz21normal} changes into }
\begin{align*}
 \dot Z &= - 2(n-1) Z +R_1(r_1,\Theta (r_1,Z,\rho_{21}),r_1^{\RH{2n-1}}\rho_{21}\Theta (r_1,Z,\rho_{21}),\rho_{21})\\
 &+\left\{-\dot L-2 (n-1) L -\frac{1}{Q_\lambda(\rho_{21})}- \frac{1}{\zeta_{2n}(r_1,0)} -1 + r_1 \rho_{21}  R_{0}(r_1,\rho_{21}) \right\}, 
\end{align*}
where $\dot L = -\frac{\partial L}{\partial r_1} r_1+\RH{\frac{\partial L}{\partial \rho_{21}} }\rho_{21}$. 
We will select $L$ so that the curly bracket is zero and therefore look for an invariant manifold $L\in C^1$,  $L(0,0)=\frac{1}{2(n-1)}$, of the following first order system
\begin{equation}\eqlab{r1Lrho21}
\begin{aligned}
 \dot r_1 &=-r_1,\\
 \dot  L &= -2(n-1)L-\frac{1}{Q_\lambda(\rho_{21})}- \frac{1}{\zeta_{2n}(r_1,0)} -1 + r_1 \rho_{21}  R_{0}(r_1,\rho_{21}),\\
 \dot \rho_{21}&=\rho_{21}.
\end{aligned}
\end{equation}
%

We first consider the invariant subspaces $r_1=0$ and $\rho_{21}=0$. For $r_1=0$, we set $J(\rho_{21})=L(0,\rho_{21})$ and find
\begin{equation}\eqlab{xz21normalr10}
\begin{aligned}
 \dot  J &= -2(n-1)J-\frac{1}{Q_\lambda(\rho_{21})},\\
 \dot \rho_{21}&=\rho_{21}.
\end{aligned}
\end{equation}
Since $Q_\lambda(0)=-1$, $(J,\rho_{21})=(\frac{1}{2(n-1)},0)$ is a hyperbolic saddle \RH{of \eqref{xz21normalr10}}, the linearization having eigenvalues $-2(n-1),1$, and there exists a unique local unstable 
manifold given as the $C^\infty$ graph:
\begin{align}\eqlab{J-def}
J(\rho_{21})=-\rho_{21}^{-2(n-1)} \int_0^{\rho_{21}}\frac{v^{2n-3}}{Q_\lambda(v)}dv= -\int_0^1 \frac{v^{2n-3}}{Q_\lambda(\rho_{21} v)}dv ,\quad \rho_{21}\ge 0.
\end{align}
This is a simple calculation. Notice that $2n-3\ge 1$ for $n\ge 2$ and $J(0)=\frac{1}{2(n-1)}$.

%
%
%

Next, we consider $\rho_{21}=0$ and set $K(r_1)=L(r_1,0)$. Then we find that
\begin{equation}\eqlab{xz21normalrho210}
\begin{aligned}
\dot r_1 &= -r_1,\\
\dot  K &= -2(n-1)K-\frac{1}{\zeta_{2n}(r_1,0)},\\
\end{aligned}
\end{equation}
Since $\zeta_{2n}(0,0)=-1$, we conclude that $(r_1,K)=(0,\frac{1}{2(n-1)})$ is a hyperbolic stable node \RH{of \eqref{xz21normalrho210}} with the linearization having eigenvalues $-1$ and $-2(n-1)$. Therefore there is no smooth invariant manifold solution $K=K(r_1)$ in general, but we will fix $\delta>0$ and consider
\begin{align}
 K(r_1)=r_1^{2(n-1)}\int_\delta^{r_1}\frac{v^{1-2n}}{\zeta_{2n}(v,0)}dv.\eqlab{Kexpr}
\end{align}
It is a simple calculation to show that this defines an invariant manifold solution of \eqref{xz21normalrho210}. Moreover, we have the following:
\begin{lemma}
 \RH{The function} $K$ given by \eqref{Kexpr} is $C^\infty$-smooth w.r.t. $(r_1,r_1^{2(n-1)} \log r_1^{-1} )$ and equals $\frac{1}{2(n-1)}$ for $r_1=0$. 

\end{lemma}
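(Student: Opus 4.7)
The plan is to substitute a Taylor expansion of $\zeta_{2n}(\cdot,0)^{-1}$ at the origin into the integrand defining $K$, and then integrate term by term, isolating the one term that produces a logarithm.

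More specifically, since $\zeta_{2n}(0,0)=-1\ne 0$, the reciprocal $v\mapsto 1/\zeta_{2n}(v,0)$ is $C^\infty$-smooth near $v=0$, so we may write
\begin{align*}
 \frac{1}{\zeta_{2n}(v,0)} = \sum_{k=0}^{2n-2} a_k v^k + v^{2n-1}\chi(v),\qquad a_0=-1,
\end{align*}
with $\chi$ smooth. Multiplying by $v^{1-2n}$ and integrating from $\delta$ to $r_1$, every term with $k\ne 2n-2$ contributes a (negative) power of $v$, while the single term $k=2n-2$ produces a logarithm $a_{2n-2}(\log r_1-\log \delta)$; the remainder term $\int_\delta^{r_1}\chi(v)\,dv$ is $C^\infty$ in $r_1$.

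Multiplying the resulting expression by $r_1^{2(n-1)}$ yields
\begin{align*}
 K(r_1) = \sum_{\substack{k=0\\ k\ne 2n-2}}^{2n-2} \frac{a_k}{k+2-2n}\bigl(r_1^{k}-\delta^{k+2-2n}r_1^{2n-2}\bigr) - a_{2n-2}\,r_1^{2n-2}\log r_1^{-1} + R(r_1),
\end{align*}
where $R(r_1):= -a_{2n-2}\log\delta \cdot r_1^{2n-2}+r_1^{2n-2}\int_\delta^{r_1}\chi(v)\,dv$ is $C^\infty$ in $r_1$. The polynomial terms are $C^\infty$ in $r_1$, and the logarithmic contribution is $C^\infty$ in the pair $(r_1,r_1^{2(n-1)}\log r_1^{-1})$ by construction. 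Evaluating at $r_1=0$, all contributions vanish except the $k=0$ term which gives $\frac{a_0}{2-2n}=\frac{1}{2(n-1)}$, as required (noting $r_1^{2n-2}\log r_1\to 0$ since $n\ge 2$).

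The only delicate point is bookkeeping the exponents to confirm that $r_1^{2(n-1)}$ exactly absorbs the $v^{1-2n}$ singularity into powers with nonnegative integer exponent (plus the single logarithmic term), and that no other non-smooth behavior is hidden in the tail $r_1^{2(n-1)}\int_\delta^{r_1}\chi(v)\,dv$; both are immediate from the explicit expansion above. Invariance of the graph $L=K(r_1)$ under the flow of \eqref{xz21normalrho210} with $\rho_{21}=0$ can be verified directly by differentiating $K$: one obtains $-r_1 K'(r_1) = -2(n-1)K(r_1) - 1/\zeta_{2n}(r_1,0)$, matching the $L$-equation restricted to $\rho_{21}=0$.
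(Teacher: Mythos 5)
Your proof is correct and follows essentially the same route as the paper: Taylor-expand $\zeta_{2n}(\cdot,0)^{-1}$ to order $2n-2$ with a smooth remainder, integrate term by term, and observe that only the $k=2n-2$ coefficient produces the logarithm, while the $k=0$ term yields the value $\tfrac{1}{2(n-1)}$ at $r_1=0$. The explicit verification of invariance at the end is a small addition beyond what the paper records, but the core argument is identical.
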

\begin{proof}
We expand the smooth function $\zeta_{2n}(r_1,0)^{-1}$ as follows
\begin{align*}
 \zeta_{2n}(r_1,0)^{-1} =: -1+\sum_{k=1}^{2n-3} a_k r_1^k  +{a_{2(n-1)}} r_1^{2(n-1)} + r_1^{2{n}-1} E(r_1),
\end{align*}
with $E\in C^\infty$.
Then we have
\begin{align*}
 K(r_1) &= \frac{1}{2(n-1)}-\frac{r_1^{2(n-1)} \delta^{2(1-n)}}{2(n-1)}+ \sum_{k=1}^{2n-3} a_k \frac{r_1^{k}-r_1^{2(n-1)} \delta^{2(1-n)+k}}{\RH{2(1-n)+k}} \\
 &+{a_{2(n-1)}} r_1^{2(n-1)} \log \frac{r_1}{\delta}+ \RH{r_1^{2(n-1)}}\int_{\delta}^{r_1} E(v)dv.
\end{align*}
This completes the proof. 
\end{proof}

\begin{remark}
    By slight abuse of notation, we will write $K$ given by \eqref{Kexpr} as $$K(r_1,r_1^{2(n-1)} \log r_1^{-1} ),$$
    \RH{where $K$ denotes a $C^\infty$-smooth function.}
\end{remark}

We now write $L$ in the following form:
\begin{align*}
 L(r_1,\rho_{21})= J(\rho_{21})+K(r_1,r_1^{2(n-1)} \log r_1^{-1})-\frac{1}{2(n-1)}+\widetilde L(r_1,\rho_{21}),
\end{align*}
with $J$ and $K$ given above \RH{and $\widetilde L(0,0)=0$}. 
Inserting this into \eqref{r1Lrho21} gives
\begin{equation}\eqlab{widetildeLeqns}
\begin{aligned}
 \dot r_1 &=-r_1,\\
 \dot{\widetilde L} &= -2(n-1) \widetilde L +r_1\rho_{21} R_0(r_1,\rho_{21}),\\
 \dot \rho_{21}&=\rho_{21}.
\end{aligned}
\end{equation}
For this system, we use normal form theory, see e.g. \cite[Proposition 4.6]{ZR02}: Fix any $k\in \mathbb N$. Then there exists a locally defined $C^k$-smooth change of coordinates $(r_1,\widetilde L,\rho_{21})\mapsto (r_1,\widehat L,\rho_{21})$, which is $\mathcal O(r_1\rho_{21})$-close to the identity, such that \eqref{widetildeLeqns} becomes
\begin{equation}\eqlab{widehatLeqns}
\begin{aligned}
 \dot r_1 &=-r_1,\\
 \dot{\widehat L} &= -2(n-1) \widehat L+\kappa(r_1\rho_{21})r_1^{2(n-1)},\\
 \dot \rho_{21}&=\rho_{21},
\end{aligned}
\end{equation}
with $\kappa$ smooth, satisfying $\kappa(0)=0$.
Notice in particular in comparison with \cite[Proposition 4.6]{ZR02} that $\alpha_{i}=0$ for all $i\in \{0,\ldots,N(k)\}$ (using the notation of \cite{ZR02}) since these resonant terms are absent in \eqref{widetildeLeqns}. It is easy to see that 
\begin{align*}
 \widehat L(r_1,\rho_{21}) = {\kappa}(r_1\rho_{21}) r_1^{2(n-1)} \log r_1^{-1}, 
\end{align*}
defines an invariant manifold for \eqref{widehatLeqns}. We summarize our findings in the following Lemma.
\begin{lemma}
 Fix any $k\in \mathbb N$ and let $\delta>0$ be sufficiently small. Then there exists a $C^k$ smooth function $$\overline L:[0,\delta)\times [0,\delta^{2(n-1)} \log \delta^{-1})\times [0,\delta)\rightarrow \mathbb R$$
 satisfying
 \begin{align*}
  \overline L(r_1,r_1^{2(n-1)}\log r_1^{-1},0)&=r_1^{2(n-1)}\int_\delta^{r_1}\frac{v^{1-2n}}{\zeta_{2n}(v,0)}dv,\quad   \overline L(0,0,\rho_{21})=-\int_0^1\frac{v^{2n-3}}{Q_\lambda(\rho_{21} v)}dv,
 \end{align*}
 in particular, $\overline L(0,0,0)=\frac{1}{2(n-1)}$, 
so that 
\begin{align}
z_{21} \RH{=\Theta (r_1,Z,\rho_{21})}= \frac{1}{Z+\overline L(r_1,r_1^{2(n-1)}\log r_1^{-1},\rho_{21}) },\eqlab{ZZ}
\end{align}
defines a $C^1$-smooth change of coordinates 
that brings \eqref{xz21normal} into the (almost linearized) form:
\begin{align}\eqlab{final-flat}
 \dot r_1 &=-r_1,\nonumber\\
 \dot Z &=-2(n-1) Z 
 +R_1(r_1,\Theta (r_1,Z,\rho_{21}),r_1^{\RH{2n-1}}\rho_{21}\Theta (r_1,Z,\rho_{21}),\rho_{21}),\\
 \dot \rho_{21}&=\rho_{21},\nonumber
\end{align}
where $R_1$ is $C^\infty$-flat w.r.t. the third argument \RH{(see \eqref{decompo-n>1})}.  
\end{lemma}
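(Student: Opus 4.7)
The plan is to build $\overline L$ by the additive decomposition introduced immediately before the statement, namely
\begin{align*}
 L(r_1,\rho_{21}) = J(\rho_{21}) + K(r_1,r_1^{2(n-1)}\log r_1^{-1}) - \tfrac{1}{2(n-1)} + \widetilde L(r_1,\rho_{21}),
\end{align*}
with $J$ given by \eqref{J-def} and $K$ by \eqref{Kexpr}. By construction $J$ is a smooth solution on $\{r_1=0\}$ of the $L$-equation in \eqref{r1Lrho21}, and $K$ is a solution on $\{\rho_{21}=0\}$. A direct substitution into \eqref{r1Lrho21} therefore eliminates the non-homogeneous terms $-\tfrac{1}{Q_\lambda(\rho_{21})}$, $-\tfrac{1}{\zeta_{2n}(r_1,0)}$ and $-1$, and leaves exactly the system \eqref{widetildeLeqns} for $\widetilde L$, with the right-hand-side perturbation $r_1\rho_{21}R_0(r_1,\rho_{21})$ factoring through the resonant monomial $r_1\rho_{21}$.

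Next I would apply the partial-linearization normal form of \cite[Proposition 4.6]{ZR02} to \eqref{widetildeLeqns} around the hyperbolic point $(r_1,\widetilde L,\rho_{21})=(0,0,0)$. The eigenvalues along the trivial directions are $-1$ and $1$, and the $\widetilde L$-direction has eigenvalue $-2(n-1)$; hence the only relevant resonances for the $\widetilde L$-equation are of the form $(r_1\rho_{21})^j r_1^{2(n-1)}$. Because the nonlinearity in \eqref{widetildeLeqns} already factors as $r_1\rho_{21}R_0$ and contains no pure $r_1^{2(n-1)}$ monomial, the $\alpha_i$-coefficients of \cite[Prop.~4.6]{ZR02} with $i\ge 1$ are absent, and one obtains the normalized system \eqref{widehatLeqns} with only a single resonant coefficient $\kappa(r_1\rho_{21})$, $\kappa(0)=0$, through a $C^k$-diffeomorphism that is $\mathcal O(r_1\rho_{21})$-close to the identity.

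On \eqref{widehatLeqns} one can then verify by direct substitution that $\widehat L(r_1,\rho_{21})=\kappa(r_1\rho_{21})\,r_1^{2(n-1)}\log r_1^{-1}$ is an invariant manifold: the derivative along the flow gives $-2(n-1)\widehat L + \kappa(r_1\rho_{21}) r_1^{2(n-1)}$, matching the right-hand side. Pulling this manifold back through the normal form change of coordinates produces $\widetilde L(r_1,\rho_{21})$ as a $C^k$ function of $(r_1,r_1^{2(n-1)}\log r_1^{-1},\rho_{21})$, and setting
\begin{align*}
 \overline L(r_1,r_1^{2(n-1)}\log r_1^{-1},\rho_{21}) := J(\rho_{21}) + K(r_1,r_1^{2(n-1)}\log r_1^{-1}) - \tfrac{1}{2(n-1)} + \widetilde L(r_1,\rho_{21})
\end{align*}
yields the desired function. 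The two boundary identities in the statement follow immediately: at $\rho_{21}=0$, $J(0)=\tfrac{1}{2(n-1)}$ and $\widetilde L(r_1,0)\equiv 0$ (since $\widehat L$ vanishes on $\rho_{21}=0$ because $\kappa(0)=0$), so $\overline L(r_1,r_1^{2(n-1)}\log r_1^{-1},0)=K(r_1,\cdot)$; similarly at $r_1=0$, $K(0,\cdot)=\tfrac{1}{2(n-1)}$ and $\widetilde L(0,\rho_{21})\equiv 0$, giving $\overline L(0,0,\rho_{21})=J(\rho_{21})$.

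Finally, substituting $z_{21}=\Theta(r_1,Z,\rho_{21})=1/(Z+\overline L)$ into \eqref{xz21normal} and using the defining invariance of $\overline L$ (which kills the curly bracket in the preparatory computation above) produces \eqref{final-flat}; the remaining inhomogeneity is the $C^\infty$-flat term $R_1$ inherited from the decomposition \eqref{decompo-n>1}. The main technical obstacle is the bookkeeping in step two, namely verifying that the resonance lattice really excludes every monomial except those factoring through $r_1\rho_{21}$ and $r_1^{2(n-1)}$, which is what forces the logarithm to appear only in the combination $r_1^{2(n-1)}\log r_1^{-1}$ and limits the regularity of $\overline L$ to $C^k$ rather than $C^\infty$.
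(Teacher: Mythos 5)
Your proposal is correct and follows essentially the same route as the paper, which presents this lemma as a summary of the construction carried out just before it: the additive decomposition $L=J+K-\tfrac{1}{2(n-1)}+\widetilde L$, the explicit invariant manifolds $J$ and $K$ on the planes $r_1=0$ and $\rho_{21}=0$, the normal form of \cite[Proposition 4.6]{ZR02} reducing \eqref{widetildeLeqns} to \eqref{widehatLeqns}, and the explicit invariant graph $\widehat L=\kappa(r_1\rho_{21})r_1^{2(n-1)}\log r_1^{-1}$. Your verification of the invariance of $\widehat L$ and of the two boundary identities matches the paper's computations.
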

\NEW{In other words, we have obtained a linearization \eqref{ZZ} of \eqref{xz21normal}, which is $C^k$-smooth w.r.t. $$(r_1,r_1^{2(n-1)}\log r_1^{-1},\rho_{21}),$$ up to $C^\infty$-flat terms (in the sense described in the lemma). }
Fix any $l\in \mathbb N$. Then upon increasing $k$, it is subsequently possible to remove $R_1$ {in \eqref{final-flat}} by a \NEW{subsequent} $C^l$-smooth change of coordinates $(r_1,\widehat Z,\rho_{21})\mapsto (r_1,Z,\rho_{21})$ with 
\begin{align*}
 Z = \widehat Z + \phi(r_1,\widehat Z,\rho_{21}),
\end{align*}
where $\phi(r_1,\widehat  Z,\rho_{21}) = \mathcal O((r_1 \rho_{21})^l)$, so that $$\dot{\widehat Z} = -2(n-1) \widehat Z.$$ This again follows from normal form theory (due to the absence of resonance terms, see e.g. \cite[Proposition 4.6]{ZR02}).

We now define the following sections 
\begin{align*}
\Sigma_{\text{in},21}\,&:\quad z_{21}\in I_{21},\,r_1=\delta,\,\rho_{21} \in [0,\nu), \\
\Sigma_{\text{out},21}\,&:\quad z_{21}\in I_{21}, \,r_1\in [0,\nu),\,\rho_{21}=\delta,
\end{align*}
with $I_{21}$ an (appropriate) closed interval that contains $2(n-1)\in I_{21}$, and where $\delta,\nu>0$ are small enough. \RH{After putting all the information together, we easily get} \KUK{the following.}
\begin{lemma}\label{lemma-very-important}
Fix any $k\in \mathbb N$, $k\gg 2(n-1)$, and $\delta>0$ small enough.
Then the transition map 
\begin{align*}
\Sigma_{\textnormal{in},21}&\rightarrow \Sigma_{\textnormal{out},21},\\
(\delta,z_{21},\rho_{21})&\mapsto (\rho_{21},z_{21+}(z_{21},\rho_{21}), \delta),
\end{align*}
given by the forward flow of \eqref{xz21normal}, is well-defined for all $\rho_{21}\in [0,\rho_{210})$ with $\rho_{210}>0$ small enough.
In particular, $z_{21+}$ takes the following from:
\begin{align*}
 z_{21+}(z_{21},\rho_{21}) = \frac{1}{\left(\frac{\rho_{21}}{\delta}\right)^{2(n-1)} \widehat Z_0+\overline L(\rho_{21},\rho_{21}^{2(n-1)}\log \rho_{21}^{-1},\delta)+ \phi(\rho_{21},\RH{\left(\frac{\rho_{21}}{\delta}\right)^{2(n-1)}}\widehat Z_0,\delta)},
\end{align*}
with $\phi\in C^k$, $\phi(r_{1},\widehat Z,\rho_{21})=\mathcal O((r_1\rho_{21})^k)$ uniformly w.r.t. $\widehat Z$, and
where $\widehat Z_0$ is a $C^k$-smooth function of $z_{21}$ and $\rho_{21}$ defined implicitly by
\begin{align*}
 z_{21} = \frac{1}{\widehat Z_0+\overline L(\delta,\delta^{2(n-1)}\log \delta^{-1},\rho_{21})+\phi(\delta,\widehat Z_0,\rho_{21})}
\end{align*}

\end{lemma}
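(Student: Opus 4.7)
The plan is to integrate the almost-linearized system and track how the coordinate changes act on the boundary sections. All the hard work has been done in constructing the two successive near-identity transformations; the present lemma is essentially a bookkeeping exercise, so I would keep the calculation short and organised along the following four steps.

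First, I would translate the entry point $(r_1,z_{21},\rho_{21})=(\delta,z_{21},\rho_{21})\in \Sigma_{\textnormal{in},21}$ into the coordinates $(r_1,\widehat Z,\rho_{21})$ in which the system is truly linear. By \eqref{ZZ} the value of $Z$ at the entry section satisfies
\begin{equation*}
z_{21}=\frac{1}{Z+\overline L(\delta,\delta^{2(n-1)}\log\delta^{-1},\rho_{21})},
\end{equation*}
and the subsequent $C^l$ normal-form change $Z=\widehat Z+\phi(r_1,\widehat Z,\rho_{21})$ with $\phi=\mathcal O((r_1\rho_{21})^l)$ then implicitly determines $\widehat Z_0=\widehat Z_0(z_{21},\rho_{21})$ from
\begin{equation*}
z_{21}=\frac{1}{\widehat Z_0+\overline L(\delta,\delta^{2(n-1)}\log\delta^{-1},\rho_{21})+\phi(\delta,\widehat Z_0,\rho_{21})}.
\end{equation*}
The implicit function theorem applies at $z_{21}=2(n-1)$, $\rho_{21}=0$ since $\overline L(0,0,0)=\tfrac{1}{2(n-1)}$ and $\phi$ vanishes there, yielding a $C^k$-smooth $\widehat Z_0$ on a neighbourhood of $I_{21}\times\{0\}$.

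Second, I would integrate the linearised system $\dot r_1=-r_1$, $\dot{\widehat Z}=-2(n-1)\widehat Z$, $\dot\rho_{21}=\rho_{21}$ explicitly. The flow time $t^*$ needed to reach $\Sigma_{\textnormal{out},21}$, defined by $\rho_{21}=\delta$, is $t^*=\log(\delta/\rho_{21})$. At that time one has
\begin{equation*}
r_1(t^*)=\delta\,\e^{-t^*}=\rho_{21},\qquad \widehat Z(t^*)=\widehat Z_0\,\e^{-2(n-1)t^*}=\left(\frac{\rho_{21}}{\delta}\right)^{2(n-1)}\widehat Z_0,
\end{equation*}
and this is exactly the combination that appears in the claimed formula.

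Third, I would invert the two coordinate changes on $\Sigma_{\textnormal{out},21}$. Applying $Z=\widehat Z+\phi(r_1,\widehat Z,\rho_{21})$ with $r_1=\rho_{21}$ and $\rho_{21}=\delta$, and then the $L$-transformation \eqref{ZZ}, produces the stated expression for $z_{21+}$. Finally, the smoothness claims for $z_{21+}$ and for $\widehat Z_0$ are transported from the smoothness of $\overline L$ (which is $C^k$ w.r.t. $(r_1,r_1^{2(n-1)}\log r_1^{-1},\rho_{21})$) and from the $C^k$ character of the first and $C^l$ of the second normal-form change; choosing $k\gg 2(n-1)$ (and $l$ large) guarantees that the composition remains in the declared regularity class on the full section. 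The main technical point is simply to verify that $\widehat Z_0$ stays in the domain where the second normal form is valid for all $z_{21}$ in a neighbourhood of $I_{21}$; this follows because the perturbation $\phi$ is $\mathcal O((\delta\rho_{21})^l)$ and $\overline L(\delta,\delta^{2(n-1)}\log\delta^{-1},0)$ is a definite number close to $\tfrac{1}{2(n-1)}$ provided $\delta$ is small. The only potential obstacle is this uniformity-of-domain check near the boundary of $I_{21}$; shrinking $\nu$ and $I_{21}$ if necessary resolves it.
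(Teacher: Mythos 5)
Your proposal is correct and follows exactly the route the paper intends (the paper omits the proof, saying only that the lemma follows "after putting all the information together"): express the entry data in the fully linearized $(r_1,\widehat Z,\rho_{21})$ coordinates via \eqref{ZZ} and the flat normal-form change, integrate the linear flow over the time $t^*=\log(\delta/\rho_{21})$ so that $r_1\mapsto\rho_{21}$ and $\widehat Z\mapsto(\rho_{21}/\delta)^{2(n-1)}\widehat Z_0$, and undo the coordinate changes at the exit section. The implicit-function-theorem justification of $\widehat Z_0$ and the uniformity-of-domain remark are the right details to supply; no gaps.
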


\subsection{The chart $\bar \rho_2=1$} Consider again the system \eqref{equation-repeated}.
In the chart $\bar \rho_2=1$, we \RH{have $x=r_2x_2,z_2=r_2^{2(n-1)}z_{22},\rho_2=r_2$ and} obtain the desingularized vector-field defined by
\begin{equation}\eqlab{xz22}
\begin{aligned}
 \dot x_2 &=  P_\lambda(x_2) + x_2^{2n}\left(\zeta_{2n}(r_2x_2,r_2)+1\right)+r_2^{2n-3} z_{22}^2 G_{22}(r_2x_2,r_2^{2n-1}z_{22},r_2),\\
   \dot z_{22} &=-x_2 z_{22}^2,\\
   \dot r_2 &= 0,
\end{aligned}
\end{equation}
with $G_{22}(x,z,\epsilon)=z^{-2} \e^{-1/z} g(x,\e^{-1/z},\epsilon)$, 
after division of the right hand side by $r_2^{2n-1}$. Here $G_{22}$ is $C^\infty$-flat w.r.t. its second argument.

For $r_2=0$, we obtain 
\begin{align*}
 \dot x_2 &=P_\lambda(x_2),\\
 \dot z_{22}&=-x_2z_{22}^2,
\end{align*}
which by the assumption $P_\lambda(x_2)<0$ for all $x_2\in \mathbb R$, see \eqref{P2ncond}, is regular. By eliminating time, we have 
\begin{align*}
 -\frac{1}{z_{22}^2} \frac{dz_{22}}{d x_2} &= \frac{x_2}{P_\lambda(x_2)},
 \end{align*}
so that 
\begin{align*}
 z_{22}(x_{2})  = \frac{1}{z_{22}(x_{20})^{-1}+\int_{x_{20}}^{x_2} \frac{v}{P_\lambda(v)}dv},
\end{align*}
\RH{using an initial condition at $x_2=x_{20}$.}

\RH{Within the invariant plane $r_1=0$ of \eqref{xz21normal}, we have the following dynamics:
\begin{equation}\eqlab{xz21normal-reduced}
\begin{aligned}
   \dot z_{21} &=z_{21}\left(2(n-1)+\frac{z_{21}}{Q_\lambda(\rho_{21})}\right) ,\nonumber\\
   \dot \rho_{21} &= \rho_{21}.\nonumber
\end{aligned}
\end{equation}
It is not difficult to see that the (local) unstable manifold of this system at the hyperbolic saddle $(z_{21},\rho_{21})=(2(n-1),0)$ is the graph of 
$$z_{21} = J(\rho_{21})^{-1}, \ \rho_{21}\ge 0, $$
where $J$ is defined in \eqref{J-def}. Now, we have} \KUK{the following.}
\begin{lemma}\label{lemma-unstable-manifold}
 The unstable manifold $z_{21} = J(\rho_{21})^{-1},\rho_{21}\ge 0$, from the chart $\bar x=1$ takes the following form 
 \begin{align*}
 z_{22}(x_{2}) = \frac{1}{-\int_{x_{2}}^\infty \frac{v}{P_\lambda(v)}dv}
 \end{align*}
in the chart $\bar \rho_2=1$. It intersects $x_{2}=0$ in the point $z_{22} = \frac{1}{-\int_{0}^\infty \frac{v}{P_\lambda(v)}dv}$. 
\end{lemma}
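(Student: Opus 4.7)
The plan is to use the change-of-coordinates formulas \eqref{cc12} between the charts $\bar x=1$ and $\bar \rho_2=1$ to transport the unstable-manifold formula $z_{21}=J(\rho_{21})^{-1}$ into the coordinates of the second chart, and then to re-express the resulting integral in terms of $P_\lambda$ via a substitution $u=1/v$. Since the unstable manifold lives inside the common invariant plane $r_1=r_2=0$, the overlap is governed by $\rho_{21}=x_2^{-1}$, $z_{22}=z_{21}x_2^{2(n-1)}$, which is valid for $x_2>0$. Inserting the expression \eqref{J-def}, $J(\rho_{21})=-\rho_{21}^{-2(n-1)}\int_0^{\rho_{21}} v^{2n-3}/Q_\lambda(v)\,dv$, the factor $\rho_{21}^{-2(n-1)}$ cancels the factor $x_2^{2(n-1)}$ coming from the coordinate change, leaving
\[
z_{22}(x_2)\;=\;\frac{1}{-\displaystyle \int_0^{x_2^{-1}}\frac{v^{2n-3}}{Q_\lambda(v)}\,dv}.
\]

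Next I would convert the integrand from $Q_\lambda$ to $P_\lambda$ using the relation $Q_\lambda(v)=v^{2n}P_\lambda(v^{-1})$. The substitution $u=1/v$, $dv=-u^{-2}\,du$, maps the interval $v\in(0,x_2^{-1}]$ to $u\in[x_2,\infty)$, and after the cancellation of powers of $v$ (namely $v^{2n-3}/v^{2n}=v^{-3}$, together with $v^{-3}\,dv=-u\,du$) the integrand becomes $-u/P_\lambda(u)$, while the swap of limits absorbs the minus sign. The denominator thus transforms into $-\int_{x_2}^{\infty} v/P_\lambda(v)\,dv$, which is convergent since \eqref{P2ncond} gives $P_\lambda(v)\le -c$ on $\mathbb R$ and $P_\lambda(v)=-v^{2n}(1+o(1))$ as $|v|\to\infty$, so the integrand decays like $v^{1-2n}$ with $n\ge 2$. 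This proves the claimed formula for $x_2>0$.

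For the extension to $x_2\le 0$ and the final statement, I would note that the right-hand side $z_{22}(x_2)=1/\bigl(-\int_{x_2}^{\infty} v/P_\lambda(v)\,dv\bigr)$ is smooth on all of $\mathbb R$, positive, and a direct differentiation shows it satisfies the reduced equation $dz_{22}/dx_2=-x_2 z_{22}^2/P_\lambda(x_2)$ obtained from \eqref{xz22} at $r_2=0$; uniqueness of solutions then identifies it with the forward/backward flow continuation of the unstable manifold, proving the formula globally. Setting $x_2=0$ yields the intersection value $z_{22}=1/\bigl(-\int_0^{\infty} v/P_\lambda(v)\,dv\bigr)$, which is a well-defined positive number by the same convergence argument. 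The only real obstacle in this proof is careful bookkeeping in the substitution $u=1/v$ (matching orientations and powers); no substantive difficulty arises.
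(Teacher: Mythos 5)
Your proposal is correct and follows essentially the same route as the paper, whose proof is a one-liner invoking exactly the three ingredients you use: the overlap formulas \eqref{cc12}, the expression \eqref{J-def} for $J$, and the relation $Q_\lambda(\rho_{21})=\rho_{21}^{2n}P_\lambda(\rho_{21}^{-1})$ combined with the substitution $u=1/v$. Your additional step of checking that the resulting expression solves the reduced equation of \eqref{xz22} at $r_2=0$ and extends by uniqueness down to $x_2=0$ simply makes explicit what the paper leaves implicit in the word ``directly''.
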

\begin{proof}
\RH{ This follows directly using $Q_\lambda(\rho_{21})=\rho_{21}^{2n} P_\lambda(\rho_{21}^{-1})$, \eqref{J-def} and the change of coordinates in \eqref{cc12}.}
\end{proof}
We now define the transverse section $\Sigma_{\text{final},22}\,:\,x_2=0,z_{22}\in [0,\mu]$ for $\mu>0$ fixed and for all $0\le r_2\ll 1$. We then combine the previous results to obtain the following:
\begin{proposition}
 The transition map $(x_{\text{in}},\delta)\mapsto (0,z_{22+}(x_{\text{in}},\epsilon))$ from the original section $\Sigma_{\text{in}}\,:x\in I_{\text{in}},\,y=\delta$ to the section $\Sigma_{\text{final},22}$ is well-defined for all $0\le \epsilon\ll 1$. In particular,
 \begin{align*}
  z_{22+} =\frac{1}{-\int_{0}^\infty \frac{v}{P_\lambda(v)}dv} + \KUK{o(1)},
 \end{align*}
where $\KUK{o(1)}$ is $C^k$-smooth w.r.t. $(x_{\text{in}},\epsilon,\epsilon \log \epsilon^{-1})$ and is zero for $\epsilon=0$. 
\end{proposition}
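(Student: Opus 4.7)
My plan is to obtain the transition map as the composition of three successive passages, tracking the $C^k$-smooth $(x_{\text{in}},\epsilon,\epsilon\log\epsilon^{-1})$-dependence through each step. First, I invoke Lemma \ref{lemma-boundary} in chart $\bar z=1$ of the cylindrical blow-up \eqref{cylbu}. The entry point $(x_{\text{in}},\delta)\in\Sigma_{\text{in}}$ corresponds via $y=\e^{-1/z}$ to $(\rho_1,\epsilon_1)=(\delta_1,\epsilon/\delta_1)\in\Sigma_{\text{in},1}$, and the lemma produces an image on $\Sigma_{\text{out},1}$ with $x=x_{\text{out},1}(x_{\text{in}}^b,\delta_1)+o(1)$, where $o(1)$ has the stated smoothness class and vanishes at $\epsilon=0$.

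Next, I rewrite this image in chart $\bar\epsilon=1$ via \eqref{cc1} (so that $z_2=1/\delta_1$ and $r_2=\epsilon$) and enter the secondary blow-up \eqref{bu-spherical} in chart $\bar x=1$. At $\epsilon=0$, the invariant plane $\rho_2=0$ transports the orbit along the curve \eqref{z2xb}, and the asymptotic $\lim_{x\to 0^+}z_2(x)x^{-2(n-1)}=2(n-1)$ shows that this orbit enters chart $\bar x=1$ exactly along the unstable manifold of the hyperbolic saddle $(0,2(n-1),0)$ and meets $\Sigma_{\text{in},21}$ at $z_{21}=2(n-1)$. By regular perturbation of \eqref{xz2} on this compact set, the value of $z_{21}$ on $\Sigma_{\text{in},21}$ for $\epsilon>0$ has the same $C^k$-smooth $(x_{\text{in}},\epsilon,\epsilon\log\epsilon^{-1})$-dependence. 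I then apply Lemma \ref{lemma-very-important} to pass from $\Sigma_{\text{in},21}$ to $\Sigma_{\text{out},21}$, producing $z_{21+}$ with the required smoothness.

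Finally, I convert to chart $\bar\rho_2=1$ via \eqref{cc12}: on $\Sigma_{\text{out},21}$ we have $x_2=\delta^{-1}$ and $z_{22}=\delta^{-2(n-1)}z_{21+}$, which by Lemma \ref{lemma-unstable-manifold} equals $1/(-\int_{\delta^{-1}}^\infty v/P_\lambda(v)\,dv)$ at $\epsilon=0$, up to an $o(1)$ correction in the above smoothness class. Since \eqref{P2ncond} makes \eqref{xz22} a regular system on the compact set $\{x_2\in[0,\delta^{-1}]\}\times\{z_{22}\in[0,\mu]\}$, and the reduced flow at $r_2=0$ follows the unstable manifold of Lemma \ref{lemma-unstable-manifold} all the way to $x_2=0$, a regular perturbation argument in $r_2=\epsilon$ yields $z_{22+}=1/(-\int_0^\infty v/P_\lambda(v)\,dv)+o(1)$ with $o(1)$ inheriting the accumulated smoothness. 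The main obstacle will be keeping the smoothness class uniform across the chart transitions---in particular, verifying that the input of Lemma \ref{lemma-very-important} acquires the required $(x_{\text{in}},\epsilon,\epsilon\log\epsilon^{-1})$-dependence uniformly in $x_{\text{in}}\in I_{\text{in}}$, despite the logarithms that enter both from Lemma \ref{lemma-boundary} and from the partial linearization \eqref{ZZ}.
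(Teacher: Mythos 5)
Your proposal is correct and follows essentially the same route as the paper: Lemma \ref{lemma-boundary} for the passage on the first cylinder, the regular transition in chart $\bar\epsilon=1$ into $\Sigma_{\text{in},21}$, Lemma \ref{lemma-very-important} for the saddle passage, and Lemma \ref{lemma-unstable-manifold} together with regular perturbation in chart $\bar\rho_2=1$. Two small imprecisions --- at $\epsilon=0$ the orbit meets $\Sigma_{\text{in},21}$ at $z_{21}=z_2(\delta)\delta^{-2(n-1)}$ (only tending to $2(n-1)$ as $r_1\to 0$), and it approaches the saddle along the stable direction in $\rho_{21}=0$ rather than ``along the unstable manifold'' --- are harmless, since Lemma \ref{lemma-very-important} makes the exit value asymptotically independent of the entry value of $z_{21}$.
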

\RH{\begin{proof}
    From Lemma \ref{lemma-boundary} and Section \ref{subsection-chart-bar-epsilon} it follows that the $z_{21}$-component of the transition map from the original section $\Sigma_{\text{in}}$ to the section $\Sigma_{\text{in},21}$ (defined before Lemma \ref{lemma-very-important}) is $C^k$-smooth w.r.t. $(x_{\text{in}},\epsilon,\epsilon \log \epsilon^{-1})$. This combined with Lemma \ref{lemma-very-important} implies that the $z_{21}$-component of the transition map from the section $\Sigma_{\text{in}}$ to the section $\Sigma_{\text{out},21}$ is again $C^k$-smooth w.r.t. $(x_{\text{in}},\epsilon,\epsilon \log \epsilon^{-1})$ and it is equal to $1/J(\delta)$ when $\epsilon=0$. Finally, using Lemma \ref{lemma-unstable-manifold} and the fact that the passage from $\Sigma_{\text{out},21}$ to $\Sigma_{\text{final},22}$ is regular, we obtain the property of $z_{22+}$.
\end{proof}

}

By replacing $x$ by $-x$ and $t$ by $-t$, we obtain \eqref{xy} with $P_\lambda(x)$ and $\zeta_{2n}(x,\epsilon)$ in \eqref{flambda} replaced by $P_\lambda(-x)$ and $\zeta_{2n}(-x,\epsilon)$, respectively. We therefore obtain the following expression for $z_{22-}$ in backward time:
\begin{align*}
  z_{22-} &=\frac{1}{-\int_{0}^\infty \frac{v}{P_\lambda(-v)}dv} + \KUK{o(1)} =\frac{1}{-\int_{0}^{-\infty} \frac{v}{P_\lambda(v)}dv} + \KUK{o(1)}.
 \end{align*}

\RH{Using the expressions for $z_{22+}$ and $z_{22-}$ and the change of coordinates $y=e^{-\frac{1}{\epsilon^{2n-1}z_{22}}}$ (see Section \ref{section-blow-up}), we finally get \eqref{zinzout}.}

Setting $z_{22+}=z_{22-}$ gives
\begin{align*}
 \int_{-\infty}^\infty \frac{v}{P_\lambda(v)}dv = \KUK{o(1)},
\end{align*}
where $\KUK{o(1)}$ is $C^k$-smooth w.r.t. \RH{$(x_{\text{in}},x_{\text{out}},\epsilon,\epsilon \log \epsilon^{-1})$} and is zero for $\epsilon=0$. \RH{Therefore, if \eqref{condI} holds true, then the entry-exit problem $I_{\text{in}}\ni x_{\text{in}}\mapsto \Delta(x_{\text{in}},\epsilon)\in I_{\text{out}}$ is not well-defined for all $0<\epsilon\ll 1$. This completes the proof of Theorem \ref{thm-n>1}.}


\section{Entry-exit problem at infinity in the Dumortier-Roussarie-Rousseau  program}
 \label{example}

In this section, 
we consider
\begin{equation}\eqlab{DDR-equ}
\begin{aligned}
 \dot x &= \epsilon\left(  \epsilon^2\lambda_0 + \epsilon\lambda_1 x+ x^{2} \zeta_{2}(x,\epsilon) \right)+ y \left(-1+\mathcal O(\epsilon)\right),\\
 \dot y &= - x y,
\end{aligned}
\end{equation}
with $\zeta_{2}(x,\epsilon)=-1+\beta x+\epsilon x^2\widetilde\zeta_{2}(x,\epsilon)$ where $\beta> 0$ and $\widetilde\zeta_{2}$ is a smooth function. Note that $\zeta_{2}(x,0)<0$ for all $x<\frac{1}{\beta}$. Clearly, system \eqref{DDR-equ} is a special case of \eqref{xy} with $n=1$ and $P_{\lambda}(x_2) =\lambda_0 +   \lambda_1 x_2  - x_2^{2}$. See also \eqref{flambda} and \eqref{P2n}. We assume that $4\lambda_0+\lambda_1^2<0$ (this implies that $P_{\lambda}$ is negative). 

\RH{Before we compute the entry-exit function associated with \eqref{DDR-equ}, let us briefly explain the connection between system \eqref{DDR-equ} and the graphics $(I_2^1)$ and $(I_4^1)$ in the Dumortier-Roussarie-Rousseau program (for more details see \cite{HuzRou202.}). 

We define a 5-parameter family of quadratic systems
\begin{equation}\eqlab{DDR-unfolding}
\begin{aligned}
 \dot x &= Ax-y+x^2+ (\mu_2+\mu_3)xy+\mu_1y^2,\\
 \dot y &= Cx +x^2+xy+\mu_3y^2,
\end{aligned}
\end{equation}
with $A$ close to $1$, $C$ close to $C_0>0$, and $\mu_1,\mu_2,\mu_3$ kept close to zero. When $A=1$, $C=C_0$ and $(\mu_1,\mu_2,\mu_3)=(0,0,0)$, the parabola $y=\frac{1}{2}x^2-\frac{C_0}{2}$ is invariant for system \eqref{DDR-unfolding}, and $(I_4^1)$ (resp. $(I_2^1)$) contains the parabola and nilpotent saddle-node at infinity and corresponds to $C_0=1$ (resp. $C_0>1$). The graphic $(I_4^1)$ contains a finite saddle-node located on the parabola. In contrast, the parabola is regular for $(I_2^1)$. (The case $C_0<1$ is not relevant since there can be no passage along the parabola.) } \RH{The full unfolding in quadratic systems of these graphics is given by \eqref{DDR-unfolding} (we refer to \cite{HuzRou202.}). 

If we apply the transformation $(x,y)=(\frac{v}{z},\frac{1}{z})$ (the positive $y$-direction) to system \eqref{DDR-unfolding}, we obtain
\begin{equation}\eqlab{DDR-unfolding-1}
\begin{aligned}
 \dot v &= \mu_1+\mu_2v-v^3-z+Avz-Cv^2z,\\
 \dot z &=-z\left(\mu_3+v+v^2+Cvz\right),
\end{aligned}
\end{equation}
after multiplying by $z$. Notice that the invariant line $z=0$ corresponds to $y=\infty$.  When $(\mu_1,\mu_2,\mu_3)=(0,0,0)$, system \eqref{DDR-unfolding-1} has a nilpotent saddle-node of multiplicity 4 at $(v,z)=(0,0)$. We make the rescaling $(\mu_1,\mu_2,\mu_3)=(\nu^2\bar\mu_1,\nu\bar\mu_2,\nu\bar\mu_3)$, with $(\bar\mu_1,\bar\mu_2,\bar\mu_3)\in\mathbb S^2$, $\nu>0$ small. Then we perform a blow-up of \eqref{DDR-unfolding-1} (with $\dot \nu=0$ augmented): $(v,z,\nu)=(r\bar v,r^2\bar z,r\bar\nu)$, with $(\bar v,\bar z,\bar\nu)\in\mathbb S^2$ and $r>0$. In the family chart $\bar\nu=1$, we obtain a desingularized system of slow-fast type, with the line of singularities $\bar z=0$ for $(\bar\mu_1,\bar\mu_2,r)=(0,0,0)$. We refer to Fig. \ref{fig-Motivation}. Finally, after additional scaling $(\bar\mu_1,\bar\mu_2, r)=(\epsilon\tilde\mu_1,\epsilon\tilde\mu_2,\epsilon\tilde r)$, with $(\tilde\mu_1,\tilde\mu_2,\tilde r)\in\mathbb S^2$ and $\epsilon> 0$, it is not difficult to see that this system (with the parameters kept in a suitable region) is analytically equivalent to a slow-fast system of type \eqref{DDR-equ}. (There also exists a region in the parameter space where the classical entry-exit problem \cite{DMS2016,Hsu19} occurs.)

The computation of the entry-exit function for \eqref{DDR-equ} plays a crucial role in detecting  limit periodic sets whose cyclicity
needs to be studied and in proving finite cyclicity of such sets.} \KUK{We postpone further details on the cyclicity to \cite{HuzRou202.}. }

The entry-exit formula \eqref{EE-remark} \NEW{for \eqref{DDR-equ}} becomes 
\begin{align*}
\beta\int_{x_{\text{out}}^b}^{x_{\text{in}}^b} \frac{1}{-1+\beta s}ds+ \log\left(-\frac{x_{\text{out}}^b}{x_{\text{in}}^b}\right)=\frac{\lambda_1 \pi}{\sqrt{-4\lambda_0-\lambda_1^2}},
\end{align*}
or, equivalently,
\begin{align*}
\log\left(-\frac{(1-\beta x_{\text{in}}^b)x_{\text{out}}^b}{(1-\beta x_{\text{out}}^b)x_{\text{in}}^b}\right)=\frac{\lambda_1 \pi}{\sqrt{-4\lambda_0-\lambda_1^2}},
\end{align*}
with $x_{\text{in}}^b\in (0,\frac{1}{\beta})$ and $x_{\text{out}}^b<0$.
Using this, we get
\begin{align}\eqlab{entry-exit-DDR}
 x_{\text{out}}^b=\frac{\e^{K} x_{\text{in}}^b }{\beta\left(\e^K+1\right)x_{\text{in}}^b-1},
\end{align}
where we write $K=\frac{\lambda_1 \pi}{\sqrt{-4\lambda_0-\lambda_1^2}}$. Since we require $x_{\text{out}}^b<0$, from \eqref{entry-exit-DDR} it follows that $x_{\text{in}}^b$ has to be kept in the interval $ (0,\frac{1}{\beta\left(\e^K+1\right)})$.

When $\epsilon=0$, system \eqref{DDR-equ} becomes 
\begin{equation}\eqlab{DDR-equ-1}
\begin{aligned}
 \dot x &= -y,\\
 \dot y &= - x y.
\end{aligned}
\end{equation}
The fast fibers of \eqref{DDR-equ-1} are parabolas $y=\frac{1}{2}x^2+C$. The parabola passing through $(x_{\text{in}},\delta)$ intersects the $x$-axis at the base point $(x_{\text{in}}^b,0)$ with 
\begin{align}\eqlab{base-point-in}
    x_{\text{in}}^b=\sqrt{x_{\text{in}}^2-2\delta}.
\end{align}
Similarly, the base point $(x_{\text{out}}^b,0)$ of $(x_{\text{out}},\delta)$ is given by
\begin{align}\eqlab{base-point-out}
    x_{\text{out}}^b=-\sqrt{x_{\text{out}}^2-2\delta}.
\end{align}
If we plug the expressions \eqref{base-point-in} and \eqref{base-point-out} into the formula \eqref{entry-exit-DDR}, we finally get 
\begin{align}\eqlab{entry-exit-DDR-final}
 x_{\text{out}}=\Delta_0(x_{\text{in}}):=-\sqrt{2\delta+ \frac{\e^{2K}(x_{\text{in}}^2-2\delta)}{\left(\beta\left(\e^K+1\right)\sqrt{x_{\text{in}}^2-2\delta}-1\right)^2}}.
\end{align}
We suppose that $I_{\text{in}}$ is a segment with 
$$I_{\text{in}}\subset\left(\sqrt{2\delta},\sqrt{2\delta +\frac{1}{\beta^2(\e^K+1)^2}} \right),$$
and $I_{\text{out}}\subset(-\infty,0)$ is an appropriate segment. 
Then the following result is a simple consequence of Theorem \ref{Thm-n=1}.
\begin{theorem}\label{Thm-DDR} Fix any $k\in \mathbb N$.
Then the Dulac map  $\Delta(\cdot,\epsilon):I_{\text{in}}\rightarrow I_{\text{out}}$ associated with \eqref{DDR-equ} is well-defined for all $\epsilon\in ]0,\epsilon_0[$ and takes the following form
\begin{align*}
 \Delta(x_{\text{in}},\epsilon) =\Delta_0(x_{\text{in}})+\phi(x_{\text{in}},\epsilon,\epsilon \log \epsilon^{-1}),
\end{align*}
where $\Delta_0(x_{\text{in}})$ is defined in \eqref{entry-exit-DDR-final} and $\phi$ is $C^k$-smooth and satisfies $\phi(x_{\text{in}},0,0)=0$ for all $x_{\text{in}}\in I_{\text{in}}$. 
\end{theorem}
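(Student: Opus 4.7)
The plan is to verify that \eqref{DDR-equ} satisfies the hypotheses of Theorem~\ref{Thm-n=1} with $n=1$, then evaluate the abstract entry-exit relation \eqref{EE-remark} in closed form for this particular system, and finally translate the resulting relation on base points back to the coordinates on $\Sigma_{\text{in}}$ and $\Sigma_{\text{out}}$.

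First I would check the hypotheses. System \eqref{DDR-equ} is of the form \eqref{xy} with $n=1$, $P_\lambda(x_2)=\lambda_0+\lambda_1 x_2-x_2^{2}$, $\zeta_2(x,\epsilon)=-1+\beta x+\epsilon x^2\widetilde\zeta_2(x,\epsilon)$, and $g(x,y,\epsilon)=-1+\mathcal O(\epsilon)$. Condition \eqref{f2n} holds on any compact interval $I$ contained in $(-\infty,1/\beta)$, since $\zeta_2(x,0)=-1+\beta x$ is bounded away from $0$ and negative on such an interval. Condition \eqref{P2ncond} follows from $4\lambda_0+\lambda_1^2<0$: the discriminant of $P_\lambda$ is then negative and the leading coefficient is $-1$, so $P_\lambda$ is bounded above by a strictly negative constant on $\mathbb R$.

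Second, I would specialize the entry-exit relation from Remark~\ref{remark-Cauchy}. Using $\zeta_2(s,0)+1=\beta s$, one computes
\begin{align*}
\int_{x_{\text{out}}^b}^{x_{\text{in}}^b}\frac{\zeta_2(s,0)+1}{s\zeta_2(s,0)}\,ds
=\beta\int_{x_{\text{out}}^b}^{x_{\text{in}}^b}\frac{ds}{-1+\beta s}
=\log\frac{1-\beta x_{\text{in}}^b}{1-\beta x_{\text{out}}^b},
\end{align*}
where both factors inside the logarithm are positive because $x_{\text{in}}^b\in(0,1/\beta)$ and $x_{\text{out}}^b<0$. Plugging this into \eqref{EE-remark} and exponentiating gives
$$
-\frac{(1-\beta x_{\text{in}}^b)x_{\text{out}}^b}{(1-\beta x_{\text{out}}^b)x_{\text{in}}^b}=\e^{K},\qquad K=\frac{\lambda_1\pi}{\sqrt{-4\lambda_0-\lambda_1^2}},
$$
which I solve algebraically for $x_{\text{out}}^b$ as a function of $x_{\text{in}}^b$, obtaining precisely \eqref{entry-exit-DDR}.

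Third, I would translate base points back to the Poincar\'e sections. For $\epsilon=0$, the equation \eqref{dydx} reduces to $x\,dx=dy$ since $g(x,y,0)\equiv -1$, so trajectories lie on the parabolas $y=\tfrac12 x^2+C$; the orbit through $(x_{\text{in}},\delta)$ reaches $y=0$ at $x_{\text{in}}^b=\sqrt{x_{\text{in}}^{2}-2\delta}$, and similarly $x_{\text{out}}^b=-\sqrt{x_{\text{out}}^{2}-2\delta}$, yielding \eqref{base-point-in}--\eqref{base-point-out}. Substituting into \eqref{entry-exit-DDR} and solving for $x_{\text{out}}$ produces the closed form $\Delta_0(x_{\text{in}})$ in \eqref{entry-exit-DDR-final}. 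The constraint $I_{\text{in}}\subset\bigl(\sqrt{2\delta},\sqrt{2\delta+1/\beta^2(\e^{K}+1)^2}\bigr)$ is exactly what guarantees $x_{\text{in}}^b\in(0,1/(\beta(\e^{K}+1)))$, the range on which \eqref{entry-exit-DDR} delivers a negative $x_{\text{out}}^b$ and hence a well-defined $x_{\text{out}}\in I_{\text{out}}\subset(-\infty,0)$. With the entry-exit condition verified for every $x_{\text{in}}\in I_{\text{in}}$, Theorem~\ref{Thm-n=1} applies verbatim and yields both the form $\Delta(x_{\text{in}},\epsilon)=\Delta_0(x_{\text{in}})+\phi(x_{\text{in}},\epsilon,\epsilon\log\epsilon^{-1})$ and the stated $C^k$-smoothness with $\phi(x_{\text{in}},0,0)=0$. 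The only nonroutine step is the bookkeeping with signs and branches of the square root when inverting the parabolic fiber relation; everything else is direct substitution into the general result.
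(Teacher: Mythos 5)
Your proposal is correct and follows essentially the same route as the paper: verify \eqref{f2n} and \eqref{P2ncond}, specialize \eqref{EE-remark} using $\zeta_2(s,0)+1=\beta s$ to obtain \eqref{entry-exit-DDR}, compute the base points from the parabolic fast fibers of \eqref{DDR-equ-1} to get \eqref{base-point-in}--\eqref{base-point-out}, and then invoke Theorem~\ref{Thm-n=1}. All the intermediate computations (the logarithmic evaluation of the integral, the algebraic inversion for $x_{\text{out}}^b$, and the interval constraint on $I_{\text{in}}$) match the paper's derivation in Section~\ref{example}.
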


\subsection{\KUK{Numerical computations of $\Delta$ for \eqref{DDR-equ}}}
In Fig. \ref{fig-DDR}(a), we have used Matlab's ODE15s with low tolerances ($10^{-12}$) to compute $x_{\text{out}}=\Delta(x_{\text{in}},\epsilon)$ for \eqref{DDR-equ} with $\mathcal O(\epsilon)$ and $\widetilde \zeta_2$ both set to zero, and the following parameter values:
\begin{align}
   \lambda_0 = -2,\quad \lambda_1 =1,\quad \beta = 1,\quad \delta=\frac12,\eqlab{DDR-para}
\end{align}
and $\epsilon=0.01$ (magenta), $\epsilon=0.005$ (red) and $\epsilon=0.001$ (blue). The result is  clearly in agreement with Theorem \ref{Thm-DDR} as we see a convergence towards the theoretical curve $x_{\text{out}} = \Delta_0(x_{\text{in}})$ given by \eqref{entry-exit-DDR-final} (black and dashed). In order to compute $x_{\text{out}}$, it was crucial to use the corresponding $(x,z)$-system:
\begin{equation}\nonumber
\begin{aligned}
 \dot x &= \epsilon\left(  \epsilon^2\lambda_0 + \epsilon\lambda_1 x+ x^{2} (-1+\beta x) \right)- \e^{-1/z},\\
 \dot z &= - x z^2.
\end{aligned}
\end{equation}
Indeed, in the $(x,y)$-coordinates, $y$ becomes exponentially small w.r.t. $\epsilon\to 0$ and round-off errors lead to meaningless predictions of $x_{\text{out}}$ (without any significant delay; results of this are not shown for simplicity). \textit{We therefore speculate that the transformation \eqref{yz} has practical significance for numerical computations of entry-exit problems in general}. 

In Fig. \ref{fig-DDR}(b), we show trajectories in the $(x,z_2)$-plane for fixed $x_{\text{in}}=1.016$ ($x_{\text{in}}^b = 0.18$), recall the definition of $z_2=\epsilon^{-1}z$ in \eqref{z2here}, \NEW{and $\epsilon=0.01$ (magenta), $\epsilon=0.005$ (red) and $\epsilon=0.001$ (blue)}. The theoretical curve for $\epsilon=0$ given by \eqref{z2xb} is shown in dashed and black for comparison. Notice the cusp-like behavior of the trajectories as they pass close to $x=0$ (which is a degenerate line for $\epsilon=0$, recall \NEW{\figref{new}} and \figref{blowup1}). This is due to the $1/\log \vert x\vert$-behavior of $z_2$ near $x=0$, recall the discussion below \eqref{z2xb}. 
\begin{figure}[htb]
	\begin{center}
		\subfigure[]{\includegraphics[width=.485\textwidth]{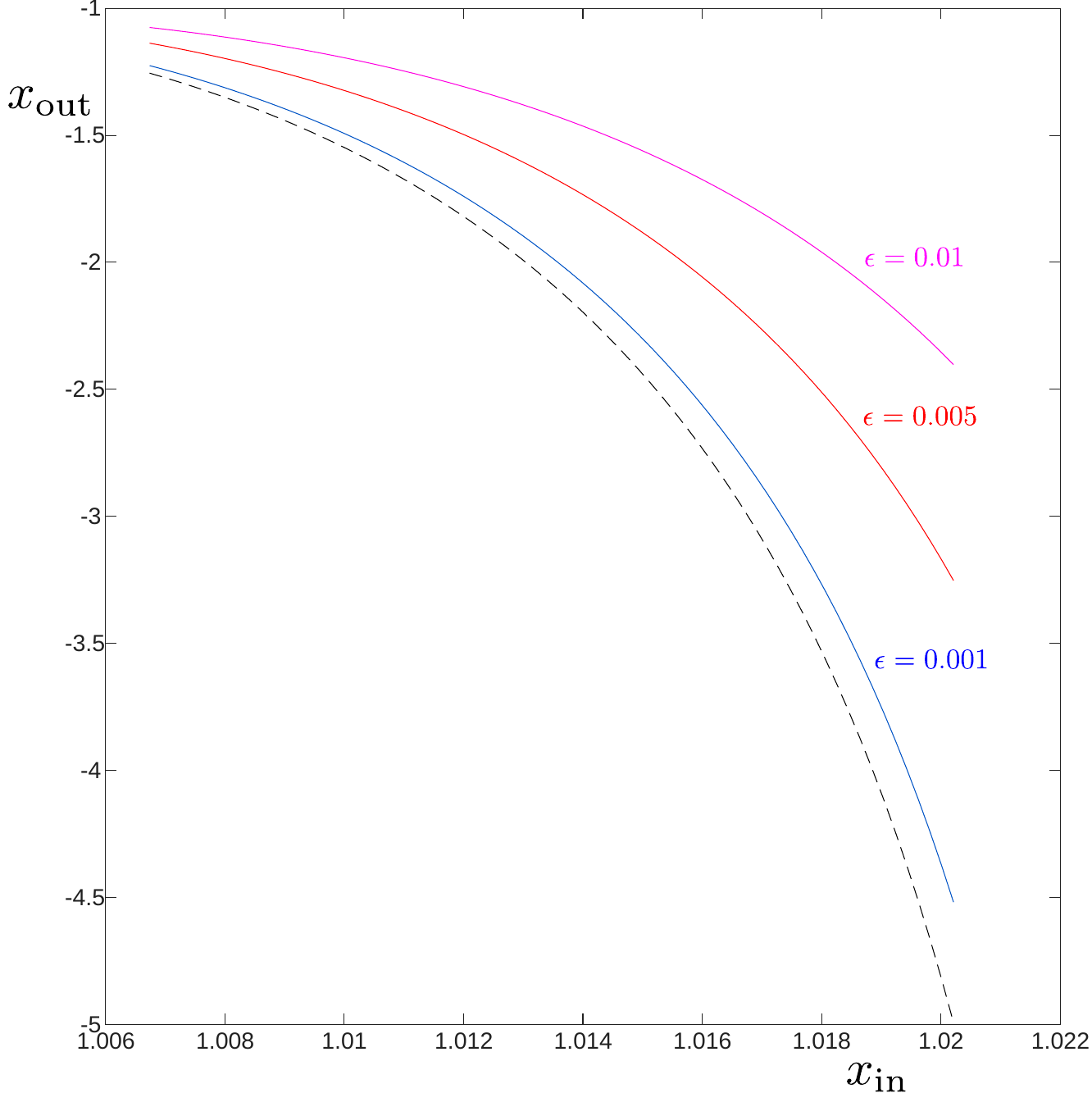}}
        \subfigure[]{\includegraphics[width=.49\textwidth]{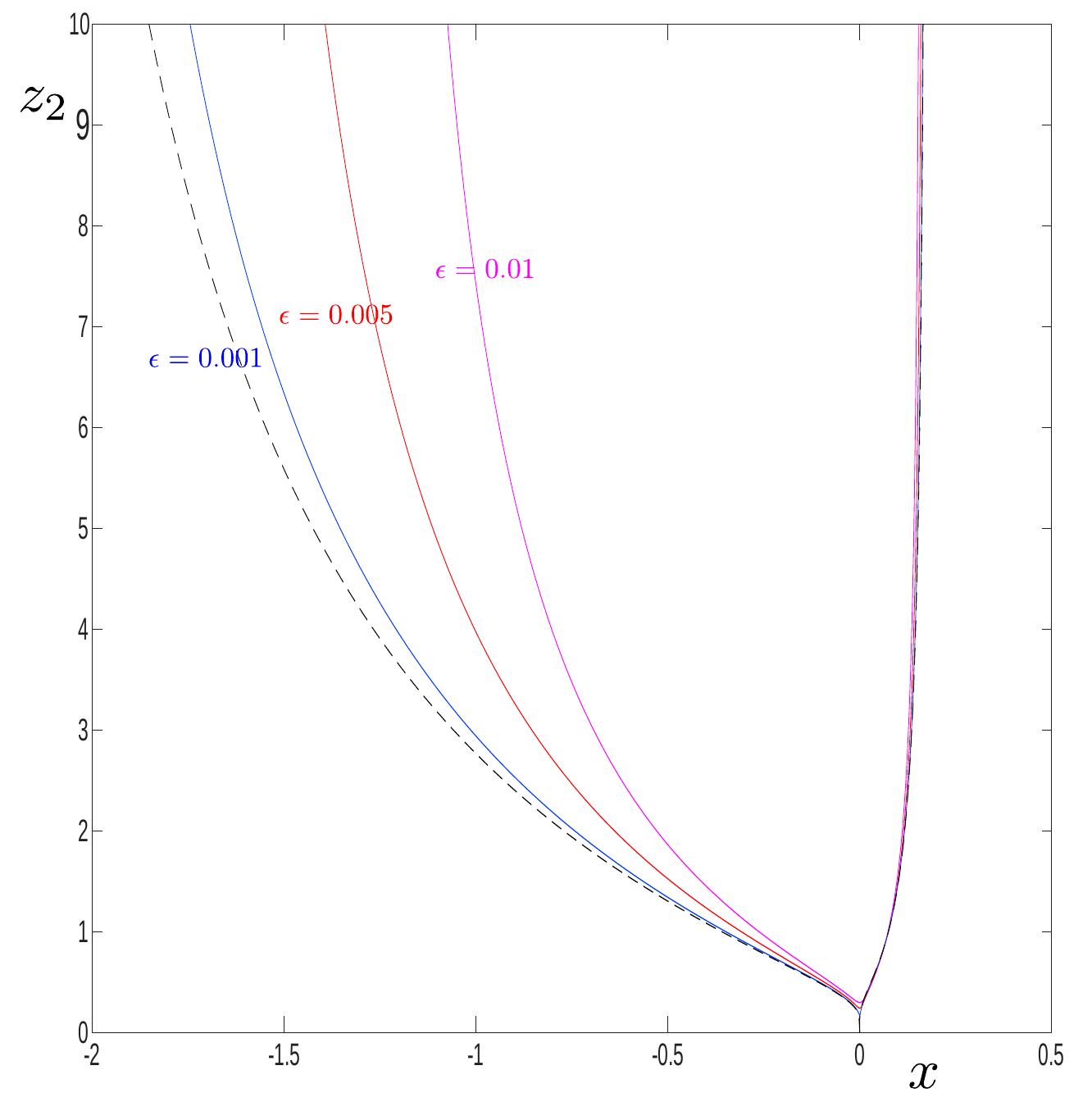}}
         \end{center}
 \caption{In (a): $x_{\text{out}}$ for \eqref{DDR-equ} with the parameter values \eqref{DDR-para} and $\epsilon=0.01$ (magenta), $\epsilon=0.005$ (red) and $\epsilon=0.001$ (blue), computed using Matlab's ODE15s (on the $(x,z)$-system) with low tolerances. The dashed black line is the theoretical curve obtained from \eqref{entry-exit-DDR-final}. In (b): Trajectories in the $(x,z_2)$-plane, recall \eqref{z2here}, for the same parameter values as in (a) but with $x_{\text{in}}=1.016$ ($x_{\text{in}}^b = 0.18$) fixed. The dashed black line is again the theoretical curve obtained from \eqref{z2xb}.}
	\label{fig-DDR}
\end{figure}

\section*{Acknowledgments} 
The authors thank Technical University of Denmark for the hospitality during R. Huzak's research visit in the spring of 2025. R. Huzak's stay was facilitated by K. U. Kristiansen's  Danish Research Council (DFF) grant 4283-00014B. Finally, the research of R. Huzak was supported by Croatian Science Foundation (HRZZ) grant IP-2022-10-9820.

\newpage

\bibliography{refs}
\bibliographystyle{plain}
\end{document}